\numberwithin{equation}{section}
\newtheorem{teo}{Theorem}[section]
\newtheorem{prop}[teo]{Proposition}
\newtheorem{lema}[teo]{Lemma}
\newtheorem{coro}[teo]{Corollary}
\theoremstyle{definition}
\newtheorem{defi}[teo]{Definition}
\theoremstyle{remark}
\newtheorem*{ack}{Acknowledgements}
\title{The polynomial method over varieties}
\author{Miguel N. Walsh}
\address{Departamento de Matemática e IMAS-CONICET, Facultad de Ciencias Exactas y Naturales, Universidad de Buenos Aires, 1428 Buenos Aires, Argentina}
\email{mwalsh@dm.uba.ar}
\begin{document}

\def\F{\mathbb{F}}
\def\Fqn{\mathbb{F}_q^n}
\def\Fq{\mathbb{F}_q}
\def\Fp{\mathbb{F}_p}
\def\Di{\mathbb{D}}
\def\E{\mathbb{E}}
\def\Z{\mathbb{Z}}
\def\Q{\mathbb{Q}}
\def\C{\mathbb{C}}
\def\R{\mathbb{R}}
\def\N{\mathbb{N}}
\def\P{\mathbb{P}}
\def\T{\mathbb{T}}
\def\modp{\, (\text{mod }p)}
\def\modN{\, (\text{mod }N)}
\def\modq{\, (\text{mod }q)}
\def\modone{\, (\text{mod }1)}
\def\Zn{\mathbb{Z}/N \mathbb{Z}}
\def\Zp{\mathbb{Z}/p \mathbb{Z}}
\def\Zan{a^{-n}\mathbb{Z}/ \mathbb{Z}}
\def\Zal{a^{-l} \Z / \Z}
\def\Pr{\text{Pr}}
\def\leftsize{\left| \left\{}
\def\rightsize{\right\} \right|}

\begin{abstract}
We establish sharp estimates that adapt the polynomial method to arbitrary varieties. These include a partitioning theorem, estimates on polynomials vanishing on fixed sets and bounds for the number of connected components of real algebraic varieties. As a first application, we provide a general incidence estimate that is tight in its dependence on the size, degree and dimension of the varieties involved.
\end{abstract}

\maketitle

\tableofcontents

\section{Introduction}

The polynomial method is a powerful tool for establishing results in a wide variety of areas by means of the construction of an adequate polynomial \cite{G,T0,W}. By its very nature, many of the arguments involving this method require us to inductively study what happens inside the varieties produced by this polynomial. Because of this, one may be lead to the study of how the polynomial method adapts to general algebraic varieties not only if this is the natural setting of our problem, but even if the question originally takes place in a fixed variety like $\R^n$.

The purpose of this article is to provide sharp versions over general algebraic varieties of the basic tools of the polynomial method. These include a polynomial partitioning theorem, a form of Siegel's lemma for general algebraic sets, estimates on the irreducible components produced by families of polynomials vanishing on fixed sets, a bound on the number of connected components of real algebraic varieties and a bound for how many components of the complement of a polynomial can be intersected by an algebraic variety of given degree and dimension. 

It should be noted that some of these results involve questions regarding real algebraic varieties that certainly have a history of their own right, independently of their connection to the polynomial method. That being said, we are especially interested in how these estimates fit together in the context of this method and in particular, we believe these results may prove to be useful to extend its applications. 

As a first example of how these results can be applied, we provide a general incidence estimate for hypersurfaces of real varieties of arbitrary dimension, that is expected to be sharp in its dependence on the size, degree and dimension of the objects involved. This improves on the best known bounds even in the case of $\R^n$. In a separate article, we will further expand on the tools developed in this article to show how these ideas can be applied, both over $\R$ and over arbitrary fields, to obtain incidence estimates that are sensitive to how the elements being studied concentrate on varieties of smaller codimension \cite{W6}.

We now provide a more detailed description of our results. Let us begin with the polynomial partitioning theorem. Upon trying to adapt Dvir's use of the polynomial method in his solution of the Kakeya problem over finite fields \cite{D}, Guth and Katz \cite{GK} applied the idea of partitioning a set of points $S \subseteq \R^n$ by means of a polynomial $P$ of adequate degree, in such a way that each connected component of $\R^n \setminus Z(P)$ contains few elements of $S$. This idea has lead to a remarkable set of results \cite{G0,G3,GK}, leading also to some variations of this estimate being established \cite{BS,FPSSZ,G1,MS}. 

Of particular interest to us is a result of Basu and Sombra \cite{BS} that shows that, for points lying in a variety $V$ of codimension at most $2$, stronger partitionings can be produced as the degree of $V$ gets larger, and conjectured the same holds for varieties of arbitrary dimension \cite[Conjecture 3.4]{BS}. Our first result, Theorem \ref{1}, answers this affirmatively. Given an irreducible variety $V \subseteq \C^n$, we will write $\delta(V)$ for the minimal integer such that $V$ is an irreducible component of $Z(f_1,\ldots,f_r)$ for some polynomials $f_i$ of degree at most $\delta(V)$ (we refer to Section \ref{S2} for some further notation and definitions, including the asymptotic notation used below). We have the following result.

\begin{teo}
\label{1}
Let $V \subseteq \C^n$ be an irreducible variety of dimension $d$ and $S$ a finite set of points inside of $V(\R)$. Then, given any integer $M \ge \delta(V)$, we can find some polynomial $g \in \R[x_1,\ldots,x_n]$ of degree $O_{n}(M)$ such that $g$ does not vanish identically on $V$ and each connected component of $\R^n \setminus Z(P)$ contains 
$$\lesssim_{n} \frac{|S|}{M^d \deg(V)},$$
elements of $S$.
\end{teo}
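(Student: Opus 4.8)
The plan is to carry out the iterated polynomial ham sandwich scheme of Guth and Katz, but with the extra constraint that the bisecting polynomial produced at each stage does not vanish identically on $V$. This requirement forces us to work modulo the ideal $I(V)$, so that the amount of room available is measured by the Hilbert function $h_V(D):=\dim_\R\bigl(\R[x_1,\dots,x_n]_{\le D}/I(V)_{\le D}\bigr)$ rather than by $\binom{n+D}{n}$; and the entire gain by the factor $\deg(V)$ comes from the fact that, once $D\gtrsim_n\delta(V)$, one has $h_V(D)\gtrsim_n\deg(V)\,D^d$, which beats by exactly this factor the count $\binom{D+d}{d}$ that a naive argument in $d$ variables would give.

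Concretely, starting from the single set $S$, suppose that at the $k$-th stage $S$ has been partitioned into $2^{k-1}$ pieces each of size $\le|S|/2^{k-1}$; choose a polynomial $g_k$ of least degree $D_k$ with $h_V(D_k)>2^{k-1}$. As $\R[x_1,\dots,x_n]_{\le D_k}$ contains a subspace of dimension $h_V(D_k)$ meeting $I(V)_{\le D_k}$ only in $0$, the polynomial ham sandwich theorem (Borsuk--Ulam on the unit sphere of that subspace) produces such a $g_k$ that simultaneously bisects all $2^{k-1}$ pieces and, being a nonzero element of that subspace, does not vanish on $V$. Replace each piece by its two intersections with $\{g_k>0\}$ and $\{g_k<0\}$ — points lying on $Z(g_k)$ may be discarded, as they will lie on $Z(g)$ and hence in no cell — and iterate until $2^J\ge c_n\deg(V)M^d$. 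Put $g:=g_1\cdots g_J$. Since $I(V)$ is prime and no $g_k$ lies in it, $g\notin I(V)$, so $g$ does not vanish on $V$; and every connected component of $\R^n\setminus Z(g)$ carries a constant sign vector $(\operatorname{sgn}g_1,\dots,\operatorname{sgn}g_J)$, so the points of $S$ it contains all lie in a single one of the $2^J$ final pieces and hence number $\le|S|/2^J\lesssim_n|S|/(\deg(V)M^d)$. It remains to bound $\deg(g)=\sum_{k\le J}D_k$ by $O_n(M)$.

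This last estimate is where the real work is, and it is exactly what the Hilbert function bounds of the earlier sections are for. In the regime $D\gtrsim_n\delta(V)$ the lower bound $h_V(D)\gtrsim_n\deg(V)\,D^d$ gives $D_k\lesssim_n(2^{k-1}/\deg(V))^{1/d}$, so a geometric sum over the later stages contributes $\lesssim_n(2^J/\deg(V))^{1/d}\lesssim_n M$. For $D<\delta(V)$, $V$ is \emph{not} an irreducible component of $Z(I(V)_{\le D})$, hence lies strictly inside some irreducible component $W$ of that set; then $I(V)_{\le D}=I(W)_{\le D}$, so $h_V(D)=h_W(D)\ge\binom{D+d+1}{d+1}\gtrsim_n D^{d+1}$ because $\dim W\ge d+1$ (project $W$ generically onto an affine space of its own dimension). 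Feeding this into the early stages bounds their contribution by $\lesssim_n\delta(V)\le M$; the only subtlety is that if $h_V$ stays near this minimum all the way up to $\delta(V)$ then — applying the first bound to $W$, whose defining degree is already below $\delta(V)$ — one finds $\deg(V)=O_n(\delta(V))$, so the threshold $c_n\deg(V)M^d$ is already crossed during the early stages. The main obstacle is thus precisely this bookkeeping: one must control $h_V(D)$ throughout the whole range $1\le D\lesssim_n M$, even though the Castelnuovo--Mumford regularity of $V$ — beyond which $h_V$ equals its Hilbert polynomial $\tfrac{\deg(V)}{d!}D^d+\cdots$ — may be vastly larger than $\delta(V)$; everything else (ham sandwich, the sign-cell count, primality, discarding zeros) is routine. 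That the result cannot be improved follows from the companion estimate bounding the number of components of $\R^n\setminus Z(g)$ met by a $d$-dimensional variety by $O_n(\deg(V)\deg(g)^d)$.
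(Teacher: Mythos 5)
Your overall plan is exactly the paper's: iterate the ham--sandwich theorem modulo $I(V)$, using the Hilbert function $h_V(D)=\dim_{\R}\bigl(\R[x_1,\dots,x_n]_{\le D}/I_{\R}(V)_{\le D}\bigr)$ to measure the room available at each stage. The mechanics (Veronese embedding, sign cells, primality of $I(V)$, discarding points on $Z(g)$) are all fine, and the late-stage bound via Chardin--Philippon is correct. The gap is entirely in the early-stage bookkeeping, where you need $\sum_{k<k^*}D_k\lesssim_n\delta(V)$ for the stages with $D_k<\delta(V)$, and your lower bound $h_V(D)\gtrsim_n D^{d+1}$ (via a component $W$ of $Z(I(V)_{\le D})$ with $\dim W\ge d+1$) is far too weak to deliver this. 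Indeed it only gives $D_k\lesssim_n 2^{(k-1)/(d+1)}$, whence
$$\sum_{k<k^*}D_k\lesssim_n 2^{(k^*-2)/(d+1)}\sim h_V(\delta(V))^{1/(d+1)},$$
and this quantity is generically much larger than $\delta(V)$. For a concrete failure, take $V\subseteq\C^3$ a generic complete-intersection curve of degree $\delta^2$ cut out by two surfaces of degree $\delta$, so $n=3$, $d=1$, $\delta_1(V)=\delta_2(V)=\delta(V)=\delta$. Then $I(V)_{\le D}=0$ for $D<\delta$, so $h_V(D)=\binom{D+3}{3}\sim D^3$ and $h_V(\delta)\sim\delta^3$; your bound only gives $\sum_{k<k^*}D_k\lesssim(\delta^3)^{1/2}=\delta^{3/2}$, which destroys $\deg g=O_n(M)$ at the extremal choice $M=\delta(V)$, even though the true early-stage sum is $\sim\delta$.

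What is missing is a lower bound whose exponent \emph{and} constant depend on where $D$ sits relative to the full sequence $\delta_1(V)\le\cdots\le\delta_{n-d}(V)$: in the range $\delta_s(V)\le D<\delta_{s+1}(V)$ the correct statement is $h_V(D)\gtrsim_n D^{n-s}\Delta_s(V)$, where the component $W$ has dimension exactly $n-s$ and you must also exploit $\deg(W)\ge\Delta_s(V)=\deg(V)/(\delta_{s+1}(V)\cdots\delta_{n-d}(V))$, not merely its dimension. This is precisely the content of Lemma \ref{hk}, and the proof of Theorem \ref{30par} then groups the ham-sandwich stages into admissible regimes indexed by $s$, summing each regime's geometric contribution to $\lesssim_n\delta_{s+1}(V)$ and the total to $\lesssim_n\sum_s\delta_{s+1}(V)\lesssim_n M$. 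Your "only subtlety" remark both misdiagnoses the problem (the dangerous case is when $h_V$ is much \emph{larger} than $D^{d+1}$, since that is when your upper bound on $D_k$ is loose and the estimated sum inflates) and is not carried out. The apparatus of partial degrees $\delta_i(V)$, the degree ratios $\Delta_i(V)$, and the admissible intervals $\mathcal{R}_i(V)$ in Section 3 of the paper is not incidental bookkeeping but exactly the device needed to close this gap.
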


We deduce this from a more general result, Theorem \ref{30par}, that is slightly harder to state but gives a corresponding estimate without requiring the restriction $M \ge \delta(V)$ to be imposed. We should remark that the more general form of Theorem \ref{30par} ends up being crucial in the subsequent applications.

To understand why an estimate of this kind could be relevant it helps to first notice that an estimate of the form $\lesssim_{n,\deg(V)} |S| M^{-d}$ for the number of elements in each connected component could be deduced from the partitioning result for $\R^d$. Theorem \ref{1} does not just make the dependence on $\deg(V)$ explicit but also shows that it actually improves as the degree of $V$ gets larger. To emphasize that estimates of this kind may in fact improve as the degree of the underlying variety gets larger is in fact one of the points of this article. 

The reason this can prove quite useful in practice is because, in many circumstances, an optimal application of the polynomial method requires us to construct a polynomial of large degree. When we subsequently want to deal with the algebraic set that this produces, it forces us to study varieties of high-degree and the general tools we have at our disposal may become much weaker in this context, making the problem unmanageable. This is the reason why many applications of the polynomial method proceed by truncating what would be the optimal polynomial that the problem would require us to construct, so that we only produce manageable low-degree varieties (e.g. \cite{BP,FPSSZ,GZ,HB,SoT}). Unfortunately, even in the contexts where this is possible, it tends to come at the cost of producing weaker results. By showing that some estimates may actually become stronger when the degree of the variety is large, results like Theorem \ref{1} open the door to countering those parts of the method that become less effective and thus make possible the study of high-degree varieties and the corresponding application of the method.

Our second result is in the same spirit. It deals with the basic problem of finding a polynomial vanishing on a given algebraic set while preventing some other variety from belonging to the resulting zero set.

\begin{teo}
\label{2}
Let $0 \le l < d \le n$ be integers. Let $V \subseteq \C^n$ be a $d$-dimensional algebraic set in $\C^n$ and $\tau_l > 0$ a real number. Let $T$ be an $l$-dimensional algebraic set of $\C^n$ with $\deg(T) \ge \tau_l \delta(V)^{d-l}\deg(V)$. Then, there exists some polynomial $P \in \C[x_1,\ldots,x_n]$ of degree at most
\begin{equation}
\label{Rbound2}
 \lesssim_{n,\tau_l} \left( \frac{\deg(T)}{\deg(V)} \right)^{\frac{1}{d-l}},
 \end{equation}
vanishing at all elements of $T$ without vanishing identically on $V$.
\end{teo}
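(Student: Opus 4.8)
The plan is to construct $P$ by a parameter count, reducing everything to an inequality between Hilbert functions. For an algebraic set $X\subseteq\C^n$ write $h_X(R)$ for the dimension of $\C[x_1,\ldots,x_n]_{\le R}/I(X)_{\le R}$. A polynomial $P$ of degree $\le R$ vanishing on $T$ but not identically on $V$ exists precisely when $I(T)_{\le R}\not\subseteq I(V)_{\le R}$, and since $I(T)_{\le R}\cap I(V)_{\le R}=I(T\cup V)_{\le R}$ this is equivalent to $h_{T\cup V}(R)>h_T(R)$; as $I(T\cup V)\subseteq I(V)$ one has $h_{T\cup V}(R)\ge h_V(R)$, so it is enough to exhibit an integer $R\lesssim_{n,\tau_l}\bigl(\deg(T)/\deg(V)\bigr)^{1/(d-l)}$ with $h_V(R)>h_T(R)$. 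First I would note that, replacing $V$ by one of its top-dimensional irreducible components (which does not enlarge $\deg(V)$ or $\delta(V)$ and keeps $\dim V=d>l$), one may assume $V$ irreducible, so that $\bar V\subseteq\P^n$ is an irreducible projective variety and $h_V(R)$ equals the Hilbert function $\dim S(\bar V)_R$ of $\bar V$.

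The count is driven by an \emph{upper} bound for $h_T$ and a \emph{lower} bound for $h_V$. For the former I would use the standard estimate $h_T(R)\le\deg(T)\binom{R+l}{l}$, proved by cutting $T$ with $l$ generic hyperplanes and inducting down to the trivial case of finitely many points; this gives $h_T(R)\lesssim_n\deg(T)R^l$ for $R\ge1$. The crucial, and harder, input is the lower bound
$$h_V(R)\ \gtrsim_n\ \deg(V)\,R^{d}\qquad\text{for all }R\ge C_n\,\delta(V).$$
To get it I would exploit the definition of $\delta(V)$: since $V$ is a component of a zero set $Z(f_1,\ldots,f_r)$ with $\deg f_i\le\delta:=\delta(V)$, one should be able (by passing to generic linear combinations, and localizing away from the other components where necessary — this is where the estimates on irreducible components enter) to produce a regular sequence $h_1,\ldots,h_c$, $c=\operatorname{codim}V=n-d$, of degrees $\lesssim_n\delta$ with $\bar V$ a component of the complete intersection $W=Z(h_1,\ldots,h_c)$. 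The homogeneous coordinate ring $B$ of $W$ is then a complete intersection, hence Cohen–Macaulay, hence \emph{free} over a Noether normalization $A=\C[\ell_0,\ldots,\ell_d]$ by $d+1$ generic linear forms, and inspection of its Hilbert series $\prod_i(1+t+\cdots+t^{\deg h_i-1})$ shows $B$ is generated as an $A$-module in degrees $\le\sum_i(\deg h_i-1)\lesssim_n\delta$. Passing to the quotient $S(\bar V)=B/I(\bar V)$, the ring $S(\bar V)$ is likewise generated over $A$ in degrees $\lesssim_n\delta$; being torsion-free of rank $\deg(V)$ over $A$, it then contains $\deg(V)$ elements $t_1,\ldots,t_{\deg(V)}$ that are $A$-linearly independent and of degree $\lesssim_n\delta$. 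The induced embedding $\bigoplus_i A\,t_i\hookrightarrow S(\bar V)$ finally yields $h_V(R)=\dim S(\bar V)_R\ge\sum_i\binom{R-\deg t_i+d}{d}\gtrsim_n\deg(V)R^d$ once $R$ exceeds a suitable $n$-dependent multiple of $\delta$.

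To conclude I would take $R$ to be the least integer exceeding $C(n,\tau_l)\bigl(\deg(T)/\deg(V)\bigr)^{1/(d-l)}$. The hypothesis $\deg(T)\ge\tau_l\,\delta(V)^{d-l}\deg(V)$ forces $\bigl(\deg(T)/\deg(V)\bigr)^{1/(d-l)}\ge\tau_l^{1/(d-l)}\delta(V)$, so choosing $C(n,\tau_l)$ large enough in terms of $n$ and $\tau_l$ makes $R\ge C_n\delta(V)$, so the lower bound of the previous paragraph applies; combining it with $h_T(R)\lesssim_n\deg(T)R^l$ and enlarging the constant once more gives $h_V(R)\gtrsim_n\deg(V)R^d>\deg(T)R^l\gtrsim_n h_T(R)$, since $R^{d-l}$ exceeds the relevant constant multiple of $\deg(T)/\deg(V)$ by the choice of $R$. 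Hence $h_V(R)>h_T(R)$, which produces the desired $P$ with $\deg P\le R\lesssim_{n,\tau_l}\bigl(\deg(T)/\deg(V)\bigr)^{1/(d-l)}$ (the rounding being harmless because $\deg(T)/\deg(V)\ge\tau_l$).

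I expect the main obstacle to be the lower bound on $h_V(R)$ in the range $R\asymp\delta(V)$. A naive approach — taking $1,t,\ldots,t^{\deg(V)-1}$ for a primitive element $t$ — only produces $A$-linearly independent elements of degree up to $\deg(V)$, hence only gives $h_V(R)\gtrsim_n\deg(V)R^d$ for $R\gtrsim_n\deg(V)$, which is useless here since $\delta(V)$ may be as small as $\deg(V)^{1/(n-d)}$. It is precisely the complete-intersection/Cohen–Macaulay structure attached to $\delta(V)$, which pins the module generators of $S(\bar V)$ over a Noether normalization in degree $O_n(\delta(V))$, together with the Bézout inequality $\deg(V)\le\delta(V)^{n-d}$, that aligns the two scales. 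Carrying this out rigorously — in particular extracting a genuine complete intersection of degree $O_n(\delta(V))$ having $V$ as a component, and controlling non-reducedness and components at infinity — is the technical heart of the argument and the step I would spend the most care on, relying on the estimates on irreducible components developed earlier.
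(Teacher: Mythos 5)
Your strategy—pick $R$ so that $h_V(R)>h_T(R)$, using an upper bound $h_T(R)\lesssim_n\deg(T)R^l$ (generic hyperplane sections) together with the lower bound $h_V(R)\gtrsim_n\deg(V)R^d$ valid for $R\gtrsim_n\delta(V)$—is a genuinely different and more direct route than the paper's. The paper only uses the Hilbert-function \emph{lower} bound (Theorem~\ref{8tool}, cited from Chardin--Philippon, via Lemma~\ref{reduciblecase} for the base case $l=0$) and then handles $l\ge 1$ by a clever induction: for each $t\in T$ it produces $\gtrsim R\deg(t)$ worth of $(l-1)$-dimensional subvarieties $\mathcal H$, applies the inductive hypothesis to find $P$ of degree $<R$ vanishing on $\mathcal H$ but not on $V$, and then uses Bezout to upgrade this to $P$ vanishing on all of $T$. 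What your approach buys is a shorter, self-contained proof; what the paper's approach buys is avoiding the upper bound $h_T(R)\lesssim_n\deg(T)R^l$ altogether, which is in keeping with its strategy of reducing everything to the single input from Chardin--Philippon.

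There is, however, a genuine gap in your reduction to irreducible $V$. In the statement $V$ is a $d$-dimensional algebraic set, possibly with many components $V_1,\ldots,V_r$. You correctly observe that replacing $V$ by a single component $V_1$ preserves the hypothesis $\deg(T)\ge\tau_l\delta(V_1)^{d-l}\deg(V_1)$, since $\deg(V_1)\le\deg(V)$ and $\delta(V_1)\le\delta(V)$. But the conclusion you would then obtain is $\deg(P)\lesssim(\deg(T)/\deg(V_1))^{1/(d-l)}$, which can be much \emph{larger} than the required $(\deg(T)/\deg(V))^{1/(d-l)}$: for instance $V$ could be a union of $\deg(V)$ hyperplanes, so that every component has degree $1$. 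The abstract framework of the first paragraph of your proposal works just as well for reducible $V$ (a polynomial in $I(T)_{\le R}\setminus I(V)_{\le R}$ is exactly one that vanishes on $T$ but not identically on $V$), so the reduction is unnecessary. What you actually need is the lower bound $h_V(R)\gtrsim_n\deg(V)R^d$ for $R\gtrsim_n\delta(V)$ with $V$ the possibly reducible algebraic set; this is precisely Theorem~\ref{8tool} of the paper and can simply be cited. If you prefer to keep your complete-intersection/Cohen--Macaulay reproof, it should be run directly for reducible $V$: take $W=Z(h_1,\ldots,h_{n-d})$ of degree $O_n(\delta(V))$ a complete intersection having every $V_i$ among its components (which the definition of $\delta(V)$ for algebraic sets supplies after generic linear combinations); then $S(\bar V)$, being a quotient of the Cohen--Macaulay ring $S(\bar W)$, is generated over a Noether normalization $A$ in degree $O_n(\delta)$ and is torsion-free of rank $\deg(V)=\sum_i\deg(V_i)$ over $A$, so the same argument applies. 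You rightly flag the behaviour at infinity and non-reducedness as the delicate points in that reproof; since the bound is already available in the literature, citing it is the safer course.
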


As with Theorem \ref{1}, this result is a particular instance of more general results established in Section \ref{S4} that provide corresponding estimates without any restriction on the degrees of the varieties and this becomes important in applications. Because of the simple nature of their statements, we believe some form of the results of Section \ref{S4} may already be present in the literature in some way or another (we particularly refer the reader to \cite{CP} and our use of Theorem \ref{8tool}). On the other hand, also by their simple nature and the reasons previously discussed, we believe these results are likely to be useful tools to have in this generality when applying the polynomial method in different contexts. In particular, it should be remarked that these results hold over any algebraically closed field and will be used in a separate article to obtain some general incidence estimates over arbitrary fields. 

Theorem \ref{2} can be used to establish a number of useful estimates. For example, it yields the following asymptotic converse of Bezout's inequality (see Theorem \ref{clases} below), refining a result of Chardin and Philippon \cite[Theorem B]{CP}.

\begin{teo}
\label{3}
Let $V \subseteq \C^n$ be an irreducible variety of dimension $d$. Then $V$ is an irreducible component of $Z(f_1,\ldots,f_{n-d})$ for some polynomials $f_i$ with $\prod_{i=1}^{n-d} \deg(f_i) \lesssim_n \deg(V)$.
\end{teo}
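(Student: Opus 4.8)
The plan is to argue by induction on the codimension $c=n-d$, building $f_1,\ldots,f_{n-d}$ one at a time so that after the $i$-th step I have an irreducible variety $W_i\supseteq V$ of dimension $n-i$ which is the \emph{unique} irreducible component of $Z(f_1,\ldots,f_i)$ through $V$; after $n-d$ steps $W_{n-d}$ has dimension $d$ and contains $V$, hence $W_{n-d}=V$, so $V$ is a component of $Z(f_1,\ldots,f_{n-d})$. The cases $c=0$ ($V=\C^n$, empty product equal to $1$) and $c=1$ ($V$ a hypersurface, defined by a polynomial of degree $\deg(V)$) are immediate, and I may assume $\deg(V)$ exceeds any prescribed constant depending on $n$, the bounded range being disposed of by a crude bound on $\delta(V)$ (for $\deg(V)$ bounded, $V$ lies in a bounded family, so $\delta(V)=O_n(1)$, and generic combinations of a bounded generating set of $I(V)$ give $V$ as a component).

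For the inductive step I would set $W_0=\C^n$ and, given $W_i$, apply Theorem \ref{2} with the algebraic set $W_i$ in the role of its ``$V$'', with $T=V$ and $l=d$. This produces a polynomial $f_{i+1}$ vanishing on $V$, not vanishing identically on $W_i$, of degree
\[
 \lesssim_n \left( \frac{\deg(V)}{\deg(W_i)} \right)^{1/(n-i-d)};
\]
concretely one wants $f_{i+1}$ of nearly minimal degree with these properties. I would then take $W_{i+1}$ to be an irreducible component of $W_i\cap Z(f_{i+1})$ containing $V$: it has dimension $n-i-1$ and, by Bezout's inequality, $\deg(W_{i+1})\le\deg(W_i)\deg(f_{i+1})$. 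Keeping $W_{i+1}$ the only component of $Z(f_1,\ldots,f_{i+1})$ through $V$ — which is where one selects $f_{i+1}$ generically inside a suitable linear subsystem — both allows the induction to proceed and gives $\delta(W_{i+1})\le\max_{j\le i+1}\deg(f_j)$, since then $W_{i+1}$ is an irreducible component of $Z(f_1,\ldots,f_{i+1})$.

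The degree accounting should then close up. Writing $w_i=\deg(W_i)$ and $D=\deg(V)$, the recursion yields $w_{i+1}\le w_i^{\,1-1/(n-i-d)}D^{1/(n-i-d)}$, so in the extremal case $w_i\asymp D^{\,i/(n-d)}$ and, crucially, $\prod_{j\le i}\deg(f_j)\asymp w_i$; taking $i=n-d$ gives $\prod_{j=1}^{n-d}\deg(f_j)\asymp w_{n-d}=D$, as wanted (the reverse inequality $\prod_j\deg(f_j)\ge\deg(V)$ being automatic from Bezout once $V$ is a component).

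The hard part will be two interlocking issues, and they are exactly why one needs the unrestricted estimates of Section \ref{S4} rather than the bare form of Theorem \ref{2}. First, to invoke Theorem \ref{2} at step $i+1$ one must verify its hypothesis $\deg(T)\ge\tau_l\,\delta(W_i)^{\,n-i-d}\deg(W_i)$; since $\delta(W_i)\le\max_{j\le i}\deg(f_j)$ and $\deg(W_i)\le\prod_{j\le i}\deg(f_j)$, this forces me to carry a bound on $\delta(W_i)$ alongside the one on $\deg(W_i)$ all the way through the induction, and controlling these two quantities simultaneously is precisely what the restricted statement does not give. Second, and this is the genuine obstacle, I must prevent $\prod_j\deg(f_j)$ from swelling to $D^{\,1+1/2+\cdots+1/(n-d)}$, which is what happens if some $W_i$ is forced to have degree much smaller than $\prod_{j\le i}\deg(f_j)$; ruling this out amounts to showing that the cut $f_{i+1}$ can always be chosen of nearly extremal degree — equivalently, that the minimal-degree member of the relevant linear system does not drop in degree at the component through $V$ when intersected with $W_i$. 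This is a statement about the geometry of these linear systems of the sort established in Section \ref{S4}, and it is in the spirit of the method of Chardin and Philippon \cite{CP}. Once it is available, the telescoping above goes through and the result follows.
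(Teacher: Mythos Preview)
Your outline identifies the right obstacles but does not clear them, and as written the argument does not close. The decisive gap is what you call the ``second issue'': you need a \emph{lower} bound on $w_i=\deg(W_i)$, whereas your recursion only furnishes upper bounds. Concretely, in codimension two your scheme gives $\deg(f_1)\lesssim D^{1/2}$ and $\deg(f_2)\lesssim D/w_1$, so $\prod_j\deg(f_j)\lesssim D\cdot\deg(f_1)/w_1$; if the component $W_1$ through $V$ happens to have $\deg(W_1)\ll\deg(f_1)$ the product blows up to $D^{3/2}$. Your proposed fix (``choose $f_{i+1}$ of nearly minimal degree'') is the right instinct, but you then need to prove that this minimal degree is small enough and that $w_{i+1}$ does not drop --- and that is precisely the missing lower bound, not a consequence of anything you have written down. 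Invoking ``Section~\ref{S4}'' as a black box does not supply it: Theorem~\ref{2} and its variants produce \emph{upper} bounds on degrees of auxiliary polynomials, not lower bounds on degrees of components. Your ``first issue'' is entangled with this: without knowing $w_i$ from below you cannot verify the hypothesis $\deg(V)\ge\tau_d\,\delta(W_i)^{n-i-d}\deg(W_i)$ of Theorem~\ref{2}, so you cannot even legally iterate.

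The paper proceeds quite differently and in a way that dissolves both issues at once. Rather than building the $f_i$ by repeated applications of Theorem~\ref{2}, it first introduces the intrinsic partial degrees $\delta_i(V)$ and constructs (Lemma~\ref{minimalP}) polynomials $P_i$ with $\deg(P_i)=\delta_i(V)$ having $V$ as a component of $Z(P_1,\ldots,P_{n-d})$; Bezout gives $\deg(V)\le\prod_i\delta_i(V)$ immediately. The content is then the reverse inequality, established in Theorem~\ref{clases} by contradiction: if some $(n-m)$-dimensional component $W$ of $Z(P_1,\ldots,P_m)$ through $V$ had $\deg(W)\le\varepsilon\prod_{i\le m}\delta_i(V)$, one applies the \emph{unrestricted} Siegel lemma (Theorem~\ref{GSiegel}, with its admissible index $s$ chosen according to where $\deg(W)$ falls) to produce a polynomial of degree strictly less than $\delta_m(V)$ vanishing on $W$ but not on the $(n-m+1)$-dimensional components above it, contradicting the minimality defining $\delta_m(V)$. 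Thus the paper never needs to verify the threshold hypothesis of Theorem~\ref{2} (it uses Theorem~\ref{GSiegel} instead), and the required lower bound on the intermediate degrees --- exactly what your argument is missing --- is obtained by playing the Siegel-type estimate off against the \emph{minimality} built into the $\delta_i(V)$. If you want to salvage your top-down construction, that minimality-versus-Siegel contradiction is the idea you must import.
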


In Section \ref{S5} and Section \ref{S6} we build upon Theorem \ref{2} and its variants to establish some relevant generalisations of Theorem \ref{3} that will be needed later. Section \ref{S6} is in fact an important part of this article. There we show how given a variety $V$ of dimension $d$, upon allowing some further flexibility in the upper bounds for the degrees of the polynomials $f_i$ that appear in Theorem \ref{3}, we can obtain an important amount of control on the higher-dimensional components of $Z(f_1,\ldots,f_{n-d})$. In order to do this, we introduce the concepts of envelopes and full covers associated with an irreducible variety $V \subseteq \C^n$ and establish the pertinent estimates for these sets. These ideas allow us to accurately model $V$ by means of an algebraic set defined by polynomials of the smallest possible degree we could expect.

We apply these results to the study of the $0$-th Betti number of a real variety $V \subseteq \R^n$. The classical work of Milnor \cite{M} and Thom \cite{T} (see also \cite{OP}) shows that if $V \subseteq \R^n$ is a real variety defined by polynomials of degree at most $D$, then the number $b_0(V)$ of connected components of $V$ is $O_n(D^n)$. Unfortunately, it turns out this estimate is not suitable for the study of the high-degree varieties arising when applying the polynomial method in high-dimensional problems. 

Let us briefly pause to discuss this. It is clear that the larger the degree of $V$ is, the larger the number of connected components we would expect it to have, and so this constitutes an example of the kind of estimates that will necessarily become less effective when studying high-degree varieties. We have previously discussed how estimates like Theorem \ref{1} and Theorem \ref{2} are intended to be the tools that allow us to counter these losses. However, the saving these results produce are only proportional to the algebraic degree $\deg(V)$ of $V$, while an estimate of the kind provided by Milnor and Thom produces losses that are proportional to a power of the largest degree of the polynomials needed to define $V$ and of course, this product can be much larger than $\deg(V)$. 

A result that is particularly important to our work is due to Barone and Basu \cite{BB}. They show that given polynomials $f_1,\ldots,f_{n-d} \in \R[x_1,\ldots,x_n]$, with $\deg(f_1) \le \ldots \le \deg(f_{n-d})$, and provided the real dimension of the algebraic set $Z(f_1,\ldots,f_i)$ is at most $n-i$ for every $1 \le i \le n-d$, the number of connected components of $Z(f_1,\ldots,f_{n-d})(\R)$ is bounded by
 $$ \lesssim_n \left(\prod_{i=1}^{n-d} \deg(f_i) \right) \deg(f_{n-d})^d,$$
see  \cite[Theorem 4]{BB}. By Theorem \ref{3} this is closer to what we would like to have and the fact that our variety of interest may only be one of many irreducible components of $Z(f_1,\ldots,f_{n-d})$ would not be detrimental in the applications. On the other hand, the dimensionality assumptions this result places on $Z(f_1,\ldots,f_i)$ are indeed very restrictive in practice and for a general variety stop us from obtaining any improvement over Milnor and Thom's bound.

Our next result addresses this problem. In Section \ref{S7} we show how the results we have established on envelopes and full covers allow us to model any variety accurately enough as to be able to carry the arguments of Barone and Basu without any additional requirements being placed on the variety. This produces the following result.

\begin{teo}
\label{4}
Let $V \subseteq \C^n$ be an irreducible variety of dimension $d$. Then there exists some algebraic set $X \subseteq \C^n$, having $V$ as an irreducible component, with $\deg(X) \lesssim_n \deg(V)$ and such that the number $b_0(X(\R))$ of connected components of $X(\R)$ satisfies
$$b_0(X(\R)) \lesssim_n \delta(V)^d \deg(V).$$
\end{teo}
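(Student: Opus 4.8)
The plan is to realise $V$ as an irreducible component of a real algebraic set $X = Z(f_1,\ldots,f_{n-d})$ whose defining degrees are \emph{balanced} — the product of the $\deg(f_i)$ being $\lesssim_n \deg(V)$ while the largest of them is $\lesssim_n \delta(V)$ — and then to feed this sequence into the estimate of Barone and Basu recalled above. The construction of such an $X$ is exactly what the machinery of envelopes and full covers from Section \ref{S6} is built to provide, and it is there that the real work lies.

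In more detail, I would proceed as follows. First, one may assume $V$ is defined over $\R$: passing from $V$ to $V \cup \overline{V}$ keeps $V$ as an irreducible component, changes $\deg(V)$ and $\delta(V)$ by at most a bounded factor, and makes the set in question defined over $\R$; this affects none of the bounds. Next, I would invoke the full cover estimates of Section \ref{S6} to obtain real polynomials $f_1,\ldots,f_{n-d} \in \R[x_1,\ldots,x_n]$, ordered so that $\deg(f_1) \le \cdots \le \deg(f_{n-d})$, such that
\begin{enumerate}[(i)]
\item $V$ is an irreducible component of $X := Z(f_1,\ldots,f_{n-d})$;
\item $\prod_{i=1}^{n-d}\deg(f_i) \lesssim_n \deg(V)$, so that $\deg(X) \lesssim_n \deg(V)$ by Bézout's inequality;
\item $\deg(f_{n-d}) \lesssim_n \delta(V)$;
\item for every $1 \le i \le n-d$ the algebraic set $Z(f_1,\ldots,f_i)$ has dimension $n-i$, so that the hypothesis of \cite[Theorem 4]{BB} is in force.
\end{enumerate}
Granting (i)--(iv), the Barone--Basu estimate yields
\begin{equation*}
b_0(X(\R)) \;\lesssim_n\; \Bigl(\prod_{i=1}^{n-d}\deg(f_i)\Bigr)\deg(f_{n-d})^{d} \;\lesssim_n\; \deg(V)\,\delta(V)^{d},
\end{equation*}
and, combined with (i) and (ii), this is precisely the statement of the theorem.

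Everything therefore comes down to the construction underlying (i)--(iv), which I expect to be the main obstacle. The point is that the two obvious strategies each control only one of the quantities in (ii) and (iii): Theorem \ref{3} makes $\prod_i \deg(f_i) \lesssim_n \deg(V)$ but leaves $\deg(f_{n-d})$ as large as $\deg(V)$, whereas taking generic $\R$-linear combinations of a system of generators of $V$ of degree $\le \delta(V)$ bounds each $\deg(f_i)$ by $\delta(V)$ and gives (iv) automatically, but only yields $\prod_i\deg(f_i) \le \delta(V)^{n-d}$, which is far larger than $\deg(V)$ in general. To obtain both at once I would build the sequence one polynomial at a time: having chosen $f_1,\ldots,f_{i-1}$ with $Z(f_1,\ldots,f_{i-1})$ of dimension $n-i+1$ and with $V$ inside one of its components contained in a suitable envelope $\mathcal{E}$ of $V$, I would apply Theorem \ref{2} — in the degree-unrestricted form of Section \ref{S4}, so that no a priori bound on the degree of the extraneous locus is needed — taking the union $W$ of the ``bad'' components of $Z(f_1,\ldots,f_{i-1})$ in the role of $T$ and $\mathcal{E}$ in the role of the ambient variety; this produces $f_i$ vanishing on $\mathcal{E}$, hence on $V$, but on no component of $W$, of degree $\lesssim_n \delta(V)$. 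One then checks, bounding $\deg(W)$ through Bézout in terms of $\prod_{j<i}\deg(f_j)$ and $\deg(\mathcal{E})$, that the running product of degrees stays $\lesssim_n \deg(V)$ and that the dimension drops by exactly one, so that (iv) is preserved. The delicate part is the bookkeeping that keeps the degrees and dimensions of the extraneous components under simultaneous control at every stage, and this is exactly what the envelope and full cover estimates of Sections \ref{S5}--\ref{S6} are designed to supply; verifying (or adapting) the Barone--Basu hypotheses for the resulting $X$ is then the remaining content of Section \ref{S7}.
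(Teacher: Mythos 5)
Your plan hinges on producing $f_1,\ldots,f_{n-d}$ satisfying \textbf{all four} of (i)--(iv) simultaneously, and in particular (iv), the requirement that $\dim Z(f_1,\ldots,f_i) = n-i$ for every intermediate $i$. This is precisely the condition that the paper's entire Section~\ref{S6} is built to circumvent, because it cannot in general be achieved alongside (ii) and (iii). The obstruction is the following: once you have constructed $f_1,\ldots,f_{i-1}$ of controlled degree, the next polynomial $f_i$ must vanish on $V$, have degree $\lesssim_n \delta_i(V)$, and cut every irreducible component of $Z(f_1,\ldots,f_{i-1})$ properly. But there may be components $W$ of $Z(f_1,\ldots,f_{i-1})$ on which \emph{every} low-degree polynomial vanishing on $V$ also vanishes — so no polynomial of the allowed degree can kill them. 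These uncontrollable high-dimensional components are exactly what the paper calls the envelope $\mathcal{E}_V(Q)$. Lemma~\ref{env2} and Proposition~\ref{RC} show they can be forced to have \emph{small degree}, but they cannot in general be avoided, which is why the paper's $X$ is a full cover $\mathcal{F}(V)$ — a union of $\mathcal{S}_V^{(n-d)}(Q)$ and recursive full covers of the envelope components — and not a complete intersection $Z(f_1,\ldots,f_{n-d})$ at all.

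Your appeal to Theorem~\ref{2} in the inductive step also has its roles reversed: that theorem produces $P$ vanishing on the \emph{lower}-dimensional set $T$ without vanishing on the \emph{higher}-dimensional irreducible $V$, whereas you want $f_i$ to vanish on the envelope (which contains $V$) while avoiding the extraneous components $W$; to make the dimensions and irreducibility hypotheses line up one would need $W$ to be irreducible and of higher dimension than the set to be annihilated, which is not how the data presents itself. Finally, once the high-dimensional extraneous components are admitted, the plain version of \cite[Theorem 4]{BB} no longer applies, and the paper instead uses a stratified refinement (Proposition~\ref{BBlema0}), which counts connected components of $Z_j(Q)(\R)$ meeting a locus $Z_\tau(Q)$ of bounded local real dimension, combined with Lemma~\ref{cboundsr} to certify that points outside the envelope satisfy the required dimension bounds. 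The components that \emph{do} meet the envelope are then absorbed into $Y = \bigcup \mathcal{F}(W_i)$ and handled by induction on codimension via Lemma~\ref{FD} and Corollary~\ref{LC}. This recursive union-of-full-covers structure, rather than a single well-behaved complete intersection, is the essential content of the paper's proof and is missing from your argument.
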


What this result accomplishes is, at the cost of replacing $V$ by an algebraic set of essentially the same degree that retains $V$ as an irreducible component, producing a bound that is substantially stronger in general than that of Milnor and Thom.

We apply similar ideas to study the closely related problem of how many connected components of the complement $\R^n \setminus Z(P)$ of a polynomial $P$ can be intersected by a variety $V$ of given degree and dimension. Remember that the idea of the partitioning theorem is, given a set $S$, to find an adequate polynomial $P$ such that each connected component of $\R^n \setminus Z(P)$ contains few elements of $S$. If we can show that a variety $V$ only intersects a few of these components, then both facts combined would limit the amount of interaction $S$ and $V$ can have. The discussion preceding Theorem \ref{4} regarding the nature of previously known bounds applies to this problem as well. Our next result provides the desired sharp dependence on the degree of $V$ for a general variety.

\begin{teo}
\label{5}
Let $V \subseteq \C^n$ be an irreducible variety of dimension $d$ and $P \in \R[x_1,\ldots,x_n]$. Then $V(\R)$ intersects $\lesssim_n \deg(V) \deg(P)^d$ connected components of $\R^n \setminus Z(P)$.
\end{teo}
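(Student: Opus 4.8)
The plan is to reduce the statement to an estimate on the number of connected components of $\R^n\setminus Z(P)$ that meet $V(\R)$, and to obtain that estimate by adapting, through the envelope and full cover constructions of Section \ref{S6}, the Morse-theoretic argument of Barone and Basu that already underlies Theorem \ref{4}. First, using those constructions I would model $V$, at the scale of $\deg(P)$, by an algebraic set $X$ with $V(\R)\subseteq X(\R)$ which admits a presentation $X=Z(f_1,\dots,f_{n-k})$, where $k=\dim X$, with $\deg(f_i)\lesssim_n\deg(P)$, with $\prod_i\deg(f_i)\lesssim_n\deg(X)$, satisfying the dimensionality conditions $\dim Z(f_1,\dots,f_i)\le n-i$, and with $\deg(X)\deg(P)^{k}\lesssim_n\deg(V)\deg(P)^{d}$ (so that $k=d$ when $\deg(P)\ge\delta(V)$, and $k>d$ with a correspondingly smaller degree otherwise). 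Since $V(\R)\subseteq X(\R)$, it then suffices to prove that any such ``$P$-adapted'' algebraic set $X$ meets $\lesssim_n\deg(X)\deg(P)^{k}$ connected components of $\R^n\setminus Z(P)$.

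I would prove this by induction on $k$, the case $k=0$ being immediate. Following Barone and Basu, one first replaces $X$ by an infinitesimal deformation that is smooth and, after intersection with a large ball, compact, in a way that does not decrease the number of components of the complement that it meets. Fixing a generic linear form $\ell$, for each component $C$ met by $X(\R)$ one chooses a connected component $\Gamma_C$ of $X(\R)\cap C$ and a point $p_C\in\overline{\Gamma_C}$ at which $\ell$ is maximal. If $p_C$ lies in the interior of $C$, then it is a local maximum of $\ell|_X$ and, lying in $C$, determines $C$ uniquely, so the cells of this type are no more numerous than the critical points of $\ell$ on $X$; this quantity is the degree of a zero-dimensional polar variety of $X$, which the Barone--Basu estimate bounds by $\lesssim_n\bigl(\prod_i\deg(f_i)\bigr)\deg(P)^{k}\lesssim_n\deg(X)\deg(P)^{k}$, the crucial point being that the $\deg(f_i)$ are already $\lesssim_n\deg(P)$. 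If instead $p_C$ lies on $\partial C$, then after the deformation that boundary sits on a smooth level set $Z(P-\varepsilon)$, or on the bounding sphere, so $p_C$ belongs to a $P$-adapted algebraic set of dimension $\le k-1$ and degree $\lesssim_n\deg(X)\deg(P)$, respectively $\lesssim_n\deg(X)$, and the inductive hypothesis bounds the number of such cells. Unwinding the recursion --- each descent to an intersection with $Z(P-\varepsilon)$ costs a factor $\deg(P)$ and lowers the dimension by one --- produces a geometric series dominated by $\deg(X)\deg(P)^{k}$.

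The main obstacle, and where the earlier sections carry the weight, is the first step: one must produce a model $X$ of $V$ at scale $\deg(P)$ that simultaneously has the small defining degrees $\deg(f_i)\lesssim_n\deg(P)$, the optimal degree product $\prod_i\deg(f_i)\lesssim_n\deg(X)$, the dimensionality conditions needed to run Barone and Basu's argument, and enough control on its higher-dimensional components that $\deg(X)\deg(P)^{k}$ does not exceed $\deg(V)\deg(P)^{d}$. This is precisely the situation that envelopes and full covers were built to handle: it is the recurring phenomenon of the paper that the usual estimates degrade with the degree of the defining equations rather than with the algebraic degree, and the modelling results are what convert that liability into the stated bound. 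Once the model is in hand, the deformation and polar-variety arguments follow the template of Theorem \ref{4}.
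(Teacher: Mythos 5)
The central difficulty you gloss over is step one. You propose to replace $V$ by a single algebraic set $X$ with $V(\R)\subseteq X(\R)$ admitting a presentation $X=Z(f_1,\dots,f_{n-k})$ with $\deg(f_i)\lesssim_n\deg(P)$, $\prod_i\deg(f_i)\lesssim_n\deg(X)$, \emph{and} the Barone--Basu dimensionality conditions $\dim Z(f_1,\dots,f_i)\le n-i$. But the envelope $\mathcal{E}_V(Q)$ is exactly the obstruction to the last requirement: for a general $V$, any tuple $Q$ with $\deg(Q_i)\sim\delta_i(V)$ cutting down to dimension $n-i$ at each step will produce extraneous higher-dimensional components, and the machinery of Sections \ref{S5} and \ref{S6} is designed to \emph{control} those components (by bounding their degrees and finding low-degree polynomials containing them), not to \emph{eliminate} them from a presentation. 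Neither Lemma \ref{minimalP}, nor Proposition \ref{RC}, nor the full covers of Proposition \ref{EXI} produce a tuple with empty envelope, and Corollary \ref{mt} produces an algebraic set of mixed dimensions, not a presentation satisfying the dimensionality hypothesis. So the "$P$-adapted model" you ask for is not supplied by the earlier sections, and I see no reason to believe it exists in general. This is a genuine gap, not a detail.

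What the paper does instead is precisely a workaround for this impossibility. Taking $j$ maximal with $\delta_j(V)\le\deg(P)$ (and treating $\deg(P)\le\delta_1(V)$ separately via Oleinik--Petrovskii), it uses the admissible tuple $Q$ and the polynomial $F=\prod_{i<j}F_i$ from Lemma \ref{env2}, which has degree $\lesssim_n\deg(P)$, vanishes on $\mathcal{E}_V^{(j)}(Q)$, and does not vanish on $V$. The part of $V(\R)$ lying in $Z(F)$ is handled by induction on $\dim V$ (not on the dimension of a model), since $V\cap Z(F)$ has pure dimension $d-1$ and degree $\lesssim_n\deg(V)\deg(P)$. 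The complementary part is handled by replacing $P$ with $f=P|F|^2$ and counting, via a path-lifting argument and an auxiliary level set $Z(f-\varepsilon)$, the relevant cells of $\R^n\setminus Z(f)$ using Proposition \ref{BBlema0}. The point you miss about Proposition \ref{BBlema0} is that it does not require global dimensionality conditions: it counts only components meeting $Z_\tau(Q)$, the closure of the locus with prescribed \emph{local} real dimensions. Lemma \ref{cboundsr} shows those dimensions are automatically correct away from the envelope, so one never needs the envelope to be empty, only to be excised by $Z(F)$. Your Morse-theoretic sketch in step two is a reasonable summary of what lies behind Proposition \ref{BBlema0}, but the paper invokes that proposition as a black box; the real work of the proof is the envelope surgery that your step one wishes away.
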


Since the original work of Milnor and Thom results like these have found a large number of applications (see \cite{BPR} for a general survey). This makes it likely that Theorem \ref{4} and Theorem \ref{5} will prove to be useful beyond the scope of the polynomial method. Let us also notice that these results constitute a variant of \cite[Conjecture 2.10]{BS} and it is likely that the same methods can be adapted to yield that estimate.

We now provide some consequences of our results in incidence geometry. Let $V \subseteq \R^n$ be an irreducible variety, $S \subseteq V$ a set of points and $T$ a family of subvarieties of $V$. Incidence geometry is interested in the estimating the number of incidences between $S$ and $T$, given by
$$ \mathcal{I}(S,T) = | \left\{ (s,t) \in S \times T : s \in t \right\}|.$$
There is a large body of work estimating this quantity for specific families of varieties. Notice, however, that a non-trivial estimate cannot be obtained in full generality. While it is clear for example that we cannot have a large number of points, and a large number of lines, with all the lines intersecting all the points, the same is not true for general varieties. Consider for instance the case in which $T$ is a set of planes. Then, if these planes are chosen so that they all contain a given line $L$, placing all points of $S$ inside of $L$ we see that we cannot improve upon the trivial bound $\mathcal{I}(S,T) = |S| |T|$. 

Nevertheless, just requiring that this particular degeneracy fails to occur, a richer theory emerges. We say $S$ is $(k,b)$-free with respect to $T$, for a certain pair of integers $k,b \ge 1$, if we cannot find $k$ elements from $S$ all of them lying inside $b$ different elements from $T$ (see Definition \ref{free}). Under this assumption, we will prove a sharp incidence estimate for hypersurfaces over general varieties. 

For $(k,d) \neq (1,1)$, let us write
$$ \alpha_k(d) = \frac{k(d-1)}{dk-1}, \, \beta_k(d) = \frac{d(k-1)}{dk-1}.$$
We set $\alpha_1(1)=0$ and $\beta_1(1)=1$. Also, let 
$$\tau_d(b,k) = b^{1-\beta_k(d)} k^{1-\alpha_k(d)}.$$
We will establish the following result.

\begin{teo}
\label{6}
Let $V \subseteq \C^n$ be an irreducible variety of dimension $d$. Let $T$ be a set of hypersurfaces of $\C^n$ and $S \subseteq V(\R)$ a set of points that is $(k,b)$-free with respect to $T$. Then $\mathcal{I}(S,T)$ is bounded by 
$$ c_1 |S|^{\alpha_k(d)}\deg(T)^{\beta_k(d)} \deg(V)^{1-\alpha_k(d)} + k \deg(T) \deg(V) + (b-1) |S|,$$
with $c_1 \lesssim_n \tau_d(b,k)$.
\end{teo}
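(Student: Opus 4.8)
The plan is to run the standard polynomial‑partitioning dichotomy, but fed by the variety‑sensitive tools developed in the earlier sections rather than the classical $\R^n$ versions. Fix the variety $V$ of dimension $d$ and the $(k,b)$‑free configuration $(S,T)$. We may assume $|S|$ is large compared with $k\deg(T)\deg(V)$, since otherwise the term $k\deg(T)\deg(V)$ already dominates. The first step is to choose a parameter $M$ (to be optimised at the end) and apply the partitioning result of Theorem \ref{1}, in its more general form from Theorem \ref{30par}, to the point set $S\subseteq V(\R)$: this produces a polynomial $g\in\R[x_1,\dots,x_n]$ of degree $O_n(M)$ that does not vanish identically on $V$ and such that each connected component of $\R^n\setminus Z(g)$ contains $\lesssim_n |S|/(M^d\deg(V))$ points of $S$. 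Split $S=S_{\mathrm{cell}}\cup S_{\mathrm{alg}}$, where $S_{\mathrm{alg}}=S\cap Z(g)$ and $S_{\mathrm{cell}}$ is the rest, and correspondingly bound $\mathcal I(S,T)\le \mathcal I(S_{\mathrm{cell}},T)+\mathcal I(S_{\mathrm{alg}},T)$.

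For the cellular part one argues componentwise. A hypersurface $t\in T$ of degree $\deg(t)$ meets, by Theorem \ref{5} applied to (the irreducible components of) $Z(g)\cap t$ or directly by a Bezout‑type count, only $\lesssim_n \deg(t)\deg(g)=O_n(\deg(t)\,M)$ of the cells of $\R^n\setminus Z(g)$ that also meet $V$ — here it is important that the relevant count is governed by $\deg(V)$ and not by the (possibly huge) defining degree of $V$, which is exactly what Theorem \ref{5} buys us; summing over $t\in T$, the total number of (cell, hypersurface) incidences with nonempty intersection is $\lesssim_n M\deg(T)$. Inside a single cell $C$ we have at most $\lesssim_n |S|/(M^d\deg(V))$ points of $S$, and the $(k,b)$‑freeness passes to $S\cap C$, so we may invoke the trivial/Cauchy–Schwarz bound: either each point lies in at most $b-1$ hypersurfaces (contributing $\le (b-1)|S_{\mathrm{cell}}\cap C|$) or, grouping by the $\ge b$ hypersurfaces through a point and using that no $k$ points are common to $b$ of them, $\mathcal I(S\cap C,T_C)\lesssim |S\cap C|\,k + |S\cap C|^{1-1/k}|T_C|$ type estimates hold, where $T_C$ is the set of hypersurfaces meeting $C$. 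Summing over all cells, using $\sum_C |T_C|\lesssim_n M\deg(T)$ and $\sum_C|S\cap C|\le|S|$, together with Hölder to handle the $|S\cap C|^{1-1/k}$ factors against the per‑cell bound $|S|/(M^d\deg(V))$, yields a bound of the shape $\lesssim_n (b-1)|S| + k\,M\deg(T) + |S|^{1-1/k}\bigl(|S|/(M^d\deg(V))\bigr)^{-?}(M\deg(T))$; the exponents are arranged precisely so that after choosing $M$ to balance the cellular term against the algebraic term below, one recovers $\alpha_k(d),\beta_k(d)$ and the constant $\tau_d(b,k)=b^{1-\beta_k(d)}k^{1-\alpha_k(d)}$.

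For the algebraic part, $S_{\mathrm{alg}}$ lies on $Z(g)\cap V$, a variety of dimension $\le d-1$ inside $V$; its degree is $O_n(M\deg(V))$ by Bezout. One proceeds by induction on the dimension $d$, the base case $d=0$ (or $d=1$) being immediate from $(k,b)$‑freeness. If some hypersurface $t\in T$ contains a whole irreducible component $W$ of $Z(g)\cap V$, we absorb those incidences by noting that $(k,b)$‑freeness limits how many hypersurfaces can contain each such $W$ and recursing on $W$; otherwise each $t$ meets each component of $Z(g)\cap V$ in a lower‑dimensional set and we apply the inductive hypothesis to the $(d-1)$‑dimensional pieces, whose total degree is $O_n(M\deg(V))$, with the same point set restricted. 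Adding the contributions over components and summing the geometric‑type series produced by the induction gives $\mathcal I(S_{\mathrm{alg}},T)\lesssim_n k\deg(T)\deg(Z(g)\cap V) + \text{(lower‑order)} \lesssim_n k\,M\deg(T)\deg(V)+\cdots$, which is again balanced by the choice of $M$.

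The main obstacle I expect is the bookkeeping in the two inductions that run simultaneously — the outer induction on $\dim V$ for the algebraic part, and the optimisation of $M$ that must be performed uniformly so that the constants $c_1\lesssim_n\tau_d(b,k)$ come out with exactly the claimed dependence on $b$ and $k$. Concretely, one must choose $M$ of order roughly $\bigl(|S|^{?}/(\deg(T)\deg(V)^{?})\bigr)^{1/(d-\text{something})}$ so that $|S|/(M^d\deg(V))$, the per‑cell size, and $M\deg(T)$, the number of cell–hypersurface interactions, are in the right ratio; keeping track of how $\deg(V)$ propagates through Theorem \ref{5} and the inductive steps without degrading to the defining degree is where the more general (unrestricted‑degree) forms of Theorems \ref{30par} and \ref{2}, advertised in the introduction as crucial, will have to be used. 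A secondary technical point is ensuring the freeness hypothesis is genuinely inherited by $S\cap C$ and by $S_{\mathrm{alg}}$ restricted to components, which is clear but must be stated, and handling the degenerate regime $|S|\lesssim k\deg(T)\deg(V)$ separately so the stated additive terms $k\deg(T)\deg(V)$ and $(b-1)|S|$ cover all cases.
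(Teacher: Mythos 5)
Your high-level plan — partition with Theorem \ref{30par}, split into cellular and algebraic parts, count cell--hypersurface interactions via Theorem \ref{5}, apply the K\H{o}v\'ari--S\'os--Tur\'an bound per cell, and induct on $\dim V$ for the algebraic part — is exactly the framework the paper uses. But the proposal, as written, has two concrete problems that are not just bookkeeping.

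First, your cell count is wrong. You claim each $t\in T$ meets $\lesssim_n \deg(t)\,M$ cells that also meet $V$. Theorem \ref{5} applied to the components of $t\cap V$ (dimension $d-1$, total degree $\lesssim \deg(t)\deg(V)$ by Bezout) gives $\lesssim_n \deg(t)\deg(V)\deg(P)^{d-1}$, i.e., you have dropped both the $\deg(V)$ factor and a power $M^{d-1}$. With the correct count, the balancing you indicate with question marks produces different exponents, and one must verify it actually lands on $\alpha_k(d),\beta_k(d)$.

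Second, and more importantly, you invoke Theorem \ref{1} in the form requiring $M\gtrsim\delta(V)$, giving per-cell size $|S|/(M^d\deg(V))$. But the $M$ that balances the two sides is in general much smaller than $\delta(V)$, in which case Theorem \ref{1} is not available and you have only Theorem \ref{30par}, whose per-cell bound is $|S|/(M^{n-s}\Delta_s(V))$ with $s=i_V(M)$. The paper's proof handles this by introducing an $(n-s)$-minimal variety $V_{n-s}$ of $V$, counting cells met by $t\cap V_{n-s}$ (not by $t\cap V$) via Theorem \ref{5}, and proving the nontrivial comparison $M_{n-d}\lesssim_n M_s$ so that the balancing still works. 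You flag in your last paragraph that "keeping track of how $\deg(V)$ propagates ... without degrading to the defining degree" is where the unrestricted-degree forms must enter, which is the right diagnosis, but the proposal does not actually supply this step, and it is the crux of the argument. Similarly, the clean emergence of the constant $\tau_d(b,k)$ depends on first splitting off the $b'\le b-1$ hypersurfaces containing all of $V$ so that $S$ is $(k,b-b')$-free with respect to the remainder — your treatment of the $(b-1)|S|$ term does not isolate this, and the induction will not close with the stated constant without it.
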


There are some aspects of this bound that warrant some discussion. We begin by singling out the particular case in which $V=\C^n$, all elements of $T$ have degree $O_n(1)$, $b=O_n(1)$ and where we ignore the precise dependence of the constants.

\begin{coro}
\label{7}
Let $T$ be a set of hypersurfaces of $\R^n$ of degree $O_n(1)$ and $S$ a set of points that is $(k,O_n(1))$-free with respect to $T$. Then 
$$ \mathcal{I}(S,T) \lesssim_{n,k} |S|^{\alpha_k(n)}|T|^{\beta_k(n)} + |T|+ |S|.$$
\end{coro}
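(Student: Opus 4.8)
The plan is to derive Corollary \ref{7} as the announced specialisation of Theorem \ref{6}. First I would set $V = \C^n$, so that $d = n$, $\deg(V) = 1$ and $\delta(V) = 1$; this immediately kills the factor $\deg(V)^{1-\alpha_k(n)} = 1$ in the main term and the factor $\deg(V)$ in the second term. Next, since every hypersurface in $T$ has degree $O_n(1)$, we have $\deg(T) \lesssim_n |T|$ and also $|T| \lesssim \deg(T)$ (each hypersurface contributes at least $1$ to $\deg(T)$), so $\deg(T)$ and $|T|$ are interchangeable up to constants depending only on $n$; replacing $\deg(T)^{\beta_k(n)}$ by $|T|^{\beta_k(n)}$ costs only such a constant, and similarly $k\deg(T)\deg(V) \lesssim_n k|T|$.

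The remaining point is the treatment of the coefficient $c_1 \lesssim_n \tau_n(b,k) = b^{1-\beta_k(n)} k^{1-\alpha_k(n)}$ and of the final summand $(b-1)|S|$. Since we are assuming $b = O_n(1)$, the quantity $b^{1-\beta_k(n)}$ is $O_n(1)$, so $c_1 \lesssim_{n,k} 1$ after also absorbing $k^{1-\alpha_k(n)}$ (which we are allowed to hide in the $\lesssim_{n,k}$ of the corollary). Likewise $(b-1)|S| \lesssim_n |S|$. Putting these reductions together, Theorem \ref{6} yields
$$ \mathcal{I}(S,T) \lesssim_{n,k} |S|^{\alpha_k(n)} |T|^{\beta_k(n)} + k|T| + |S| \lesssim_{n,k} |S|^{\alpha_k(n)}|T|^{\beta_k(n)} + |T| + |S|,$$
which is exactly the claimed bound.

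There is essentially no obstacle here: the corollary is a direct corollary in the literal sense, and the only things to check are the harmless bookkeeping facts that a bounded-degree hypersurface arrangement has $\deg(T) \asymp_n |T|$ and that all the $b$- and $k$-dependent prefactors are legitimately absorbed into the stated implicit constant $\lesssim_{n,k}$. If anything deserves a sentence of care, it is the equivalence between counting by cardinality $|T|$ and by total degree $\deg(T)$ when the individual degrees are bounded, and the observation that $S \subseteq V(\R) = \R^n$ so that the hypothesis of Theorem \ref{6} is met verbatim.
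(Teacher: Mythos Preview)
Your argument is correct and matches the paper's own derivation: the corollary is stated precisely as the specialisation of Theorem \ref{6} to $V=\C^n$, $b=O_n(1)$, and hypersurfaces of bounded degree, and your bookkeeping (replacing $\deg(T)$ by $|T|$ up to $O_n(1)$ factors and absorbing $\tau_n(b,k)$ and $(b-1)$ into the constant) is exactly what is needed.
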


Corollary \ref{7} answers a conjecture of Elekes and Szabó \cite[Section 2]{ES} and a conjecture of Basu and Sombra \cite[Conjecture 4.1]{BS}. It improves on an estimate obtained by Fox, Pach, Sheffer, Suk and Zahl \cite{FPSSZ}. When $n=2$, it recovers the Szemerédi-Trotter theorem when $T$ is a set of lines \cite{ST} and the Pach-Sharir theorem when $T$ is a set of algebraic curves \cite{PS}. The case $n=3$ recovers the estimate given by Zahl in \cite{Z}, while the case $n=4$ gives that of \cite{BS}. It also subsumes further results in \cite{BK,CEGSW,ES,KMSS,LSZ}.

However, Theorem \ref{6} is a substantially stronger estimate than Corollary \ref{7}. In particular, it provides a very explicit dependence on the degrees of the varieties involved and this aspect of the bound seems to be new in problems of this kind. Furthermore, we will show in Section \ref{83} how to conditionally construct examples realising the main term of the bound, showing that we expect this estimate to be tight in general. Theorem \ref{6} is also very explicit in its dependence of $b$ and $k$ and this makes it significant even when $b$ and $k$ are not constrained to be uniformly bounded as the size of $S$ and $\deg(T)$ grow, as it is usually the case in the literature.

We have thus obtained a result that is effective on the degrees of both $T$ and $V$ and there is a particular aspect of this dependence that is worth discussing. Notice that we can see this as a problem involving a subvariety of $V$ of codimension one with respect to $V$ and degree $\deg(V) \deg(T)$. From this point of view, it may seem surprising that $\deg(T)$ and $\deg(V)$ appear with different exponents in the main term. In fact, one can check that $1-\alpha_k(d)=\beta_k(d)/d$ and so this exponent is substantially smaller than that of the factor $\deg(T)$ in general.

This is part of a broader phenomenon along the lines that have been discussed so far in the article. This has to do with the fact that incidence estimates actually improve as the degree of the ambient variety $V$ gets larger and this turns out to happen in complete generality. The dependence on the degrees in the main term of Theorem \ref{6} should be seen as a factor of the form $(\deg(T)\deg(V))^{\beta_k(d)}$ relating to the varieties whose incidences with $S$ we are studying, divided by an additional power of $\deg(V)$ that is a saving factor coming from the high-degree nature of the ambient variety $V$. This is made explicit in a separate article \cite{W6} where we allow the varieties $T$ being studied to have low degree inside of $V$. 

We finish this introduction with one last remark regarding the term $(b-1)|S|$ in Theorem \ref{6}. This term is clearly necessary, since by requiring $S$ to be $(k,b)$-free with respect to $T$ we have not excluded the possibility that $T$ is a set of $b-1$ varieties, all of which contain a given subvariety $L$ where all the points of $S$ lie. A nice consequence of ensuring it appears in this sharp form is that it allows us to immediately deduce an estimate for the set of $r$-rich points $\mathcal{P}_r(T)$ of $T$ in the optimal range. This is the set of points that are incident to at least $r$ elements of $T$ and phrasing incidence estimates in terms of this set can turn out to be useful in applications. Using that $\frac{\beta_k(d)}{1-\alpha_k(d)}=d$ and $\frac{1-\beta_k(d)}{1-\alpha_k(d)}=\frac{d-1}{k-1}$, we deduce from Theorem \ref{6} the following result. 

\begin{coro}
\label{8}
Let $V \subseteq \C^n$ be an irreducible variety of dimension $d$ and $T$ a set of hypersurfaces of $\C^n$. Let $r \ge b$ and let $S$ be a maximal $(k,b)$-free subset of $\mathcal{P}_r(T) \cap V(\R)$. Then
$$ |S| \le k \deg(V) \left(  \frac{2 \deg(T)}{r-b+1} + \frac{c_2 b^{\frac{d-1}{k-1}} \deg(T)^d}{(r-b+1)^{\frac{1}{1-\alpha_k(d)}}} \right),$$
with $c_2= O_n(1)$.
\end{coro}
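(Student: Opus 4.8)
The plan is to combine Theorem \ref{6} with a trivial lower bound for the number of incidences and then run the standard rich-points dichotomy. Since every point of $S\subseteq\mathcal{P}_r(T)$ is incident to at least $r$ elements of $T$ we have $\mathcal{I}(S,T)\ge r|S|$, while $S$ is $(k,b)$-free with respect to $T$ and contained in $V(\R)$, so Theorem \ref{6} applies to it (the maximality of $S$ plays no role in the estimate itself). Comparing the two bounds gives
$$r|S| \le c_1 |S|^{\alpha_k(d)}\deg(T)^{\beta_k(d)} \deg(V)^{1-\alpha_k(d)} + k \deg(T) \deg(V) + (b-1)|S|,$$
with $c_1 \lesssim_n \tau_d(b,k)$, and since $r \ge b$ we may subtract $(b-1)|S|$ from both sides to get $(r-b+1)|S| \le c_1 |S|^{\alpha_k(d)}\deg(T)^{\beta_k(d)} \deg(V)^{1-\alpha_k(d)} + k \deg(T) \deg(V)$.

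Next I would split according to which term on the right dominates. If $k\deg(T)\deg(V)$ is the larger one, then $(r-b+1)|S|\le 2k\deg(T)\deg(V)$, which yields the first summand of the claimed bound. Otherwise $(r-b+1)|S|\le 2c_1|S|^{\alpha_k(d)}\deg(T)^{\beta_k(d)}\deg(V)^{1-\alpha_k(d)}$; dividing by $|S|^{\alpha_k(d)}$ and raising to the power $\frac{1}{1-\alpha_k(d)}$ produces
$$|S| \le \left(\frac{2c_1}{r-b+1}\right)^{\frac{1}{1-\alpha_k(d)}} \deg(T)^{\frac{\beta_k(d)}{1-\alpha_k(d)}}\deg(V).$$
Now I invoke the identities $\frac{\beta_k(d)}{1-\alpha_k(d)}=d$ and $\frac{1-\beta_k(d)}{1-\alpha_k(d)}=\frac{d-1}{k-1}$ recorded just before the statement, together with $c_1\lesssim_n\tau_d(b,k)=b^{1-\beta_k(d)}k^{1-\alpha_k(d)}$, which give $\deg(T)^{\beta_k(d)/(1-\alpha_k(d))}=\deg(T)^d$ and $c_1^{1/(1-\alpha_k(d))}\lesssim_n k\,b^{\frac{d-1}{k-1}}$. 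Substituting yields the second summand, and adding the two cases and factoring out $k\deg(V)$ gives exactly the inequality in the statement.

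The only step needing a bit of care — and essentially the only obstacle — is verifying that the factor $2^{1/(1-\alpha_k(d))}$, as well as the implied $O_n(1)$ constant raised to that same power, can be absorbed into a constant $c_2=O_n(1)$. For this it suffices to observe that $\frac{1}{1-\alpha_k(d)}=\frac{dk-1}{k-1}=d+\frac{d-1}{k-1}\le 2d-1\le 2n-1$ whenever $k\ge 2$, so the exponent is bounded purely in terms of $n$; the leftover case $k=1$ is degenerate, since a $(1,b)$-free subset of $\mathcal{P}_r(T)$ with $r\ge b$ consists of points lying on both at least $r\ge b$ and at most $b-1$ elements of $T$, hence is empty and the bound holds trivially. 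Everything beyond this is routine arithmetic with the exponents $\alpha_k(d),\beta_k(d)$.
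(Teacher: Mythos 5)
Your proof is correct and matches what the paper intends: the corollary is stated right after the remark that it "follows from Theorem \ref{6}" using the identities $\frac{\beta_k(d)}{1-\alpha_k(d)}=d$ and $\frac{1-\beta_k(d)}{1-\alpha_k(d)}=\frac{d-1}{k-1}$, and your argument is the standard rich-points dichotomy implementing exactly that. You also correctly identified the two small points worth spelling out: that the exponent $\frac{1}{1-\alpha_k(d)}=\frac{dk-1}{k-1}$ is $O_n(1)$ when $k\ge 2$ so the powers of the absolute constants stay $O_n(1)$, and that the case $k=1$ is vacuous since a $(1,b)$-free subset of $\mathcal{P}_r(T)$ with $r\ge b$ must be empty.
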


\begin{ack}
Part of this work was carried while the author was a Clay Research Fellow and a Fellow of Merton College at the University of Oxford. The author would like to thank Cosmin Pohoata and Martin Sombra for pointing out some typos in an earlier version of this manuscript, as well as an anonymous referee for some helpful suggestions.
\end{ack}

\section{Preliminaries}
\label{S2}

\subsection{Notation}

Given parameters $a_1,\ldots,a_r$ we shall use the asymptotic notations $X \lesssim_{a_1,\ldots,a_r} Y$ or $X=O_{a_1,\ldots,a_r}(Y)$ to mean that there exists some constant $C$ depending only on $a_1,\ldots,a_r$ such that $X \le C Y$. We write $X \sim_{a_1,\ldots,a_r} Y$ if $X \lesssim_{a_1,\ldots,a_r} Y \lesssim_{a_1,\ldots,a_r} X$. We shall write $|A|$ for the cardinality of a set $A$.

Given polynomials $f_1,\ldots,f_r \in \C[x_1,\ldots,x_n]$ we will write
$$ Z(f_1,\ldots,f_r) = \left\{ x \in \C^n : f_1(x) = \cdots = f_r(x) = 0 \right\},$$
for the corresponding zero set. For an irreducible variety $V \subseteq \C^n$ we write $\deg(V)$ for the degree of its projective closure with respect to the standard embedding of $\C^n$ into $\P^n$ and more generally, for an algebraic set $V$ with irreducible components $V_1,\ldots,V_s$ we write $\deg(V) = \sum_{i=1}^s \deg(V_i)$. By an algebraic set of dimension $d$ we mean an algebraic set all of whose irreducible components have dimension $d$. Even though such a definition is not standard, we find it more convenient for our purposes. We will write $I(V)$ for the ideal of an algebraic set $V$ and write $I_{\R}(V) \subseteq I(V)$ for its subset of real polynomials. We shall also write $V(\R)$ for the real points of $V$.

\subsection{Algebraic preliminaries}

We will be using the following form of Bezout's inequality  \cite[Theorem 7.7]{Hart}.

\begin{lema}[Bezout's inequality]
\label{Bezout}
Let $W \subseteq \C^n$ be an irreducible variety and $f_1,\ldots,f_s \in \C[x_1,\ldots,x_n]$ polynomials. Write $Z_1,\ldots,Z_r$ for the irreducible components of $Z(f_1,\ldots,f_s) \cap W$. Then
$$ \sum_{i=1}^r \deg(Z_i) \le \deg(W) \prod_{j=1}^s \deg(f_j).$$
\end{lema}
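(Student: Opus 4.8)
This is a classical statement (it is Hartshorne's projective B\'ezout theorem, here recorded as an inequality after discarding intersection multiplicities), but let me indicate the proof I would give. The plan has two stages: first reduce to the case $s=1$ by induction on the number of polynomials, and then reduce that case to counting the zeros of a polynomial on a curve.

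For the inductive step, assume the bound for $s-1$ polynomials and let $W_1,\ldots,W_p$ be the irreducible components of $Z(f_1,\ldots,f_{s-1})\cap W$, so that $\sum_a \deg(W_a)\le \deg(W)\prod_{j=1}^{s-1}\deg(f_j)$. Since $Z(f_1,\ldots,f_s)\cap W=\bigcup_a\bigl(W_a\cap Z(f_s)\bigr)$, every $Z_i$ is an irreducible component of some $W_a\cap Z(f_s)$, so it suffices to prove $\sum_Y \deg(Y)\le \deg(W_a)\deg(f_s)$ for each $a$, the sum running over the irreducible components $Y$ of $W_a\cap Z(f_s)$; when $W_a\subseteq Z(f_s)$ this is immediate, so we are reduced to the case of a single irreducible $W$ and one polynomial $f$ with $W\not\subseteq Z(f)$.

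For that case, put $k=\dim(W)\ge 1$, $D=\deg(W)$, $e=\deg(f)$, and let $Y_1,\ldots,Y_m$ be the irreducible components of $W\cap Z(f)$, each of dimension $k-1$ by Krull's principal ideal theorem. Passing to projective closures in $\P^n$ and intersecting with a generic linear subspace $L$ of codimension $k-1$, I would arrange simultaneously (there being finitely many conditions, each open and dense on the Grassmannian): that $L$ meets each $\overline{Y_j}$ in exactly $\deg(Y_j)$ reduced points, all lying in the affine chart and avoiding the other $\overline{Y_{j'}}$ (possible since the relevant sections at infinity and the intersections $\overline{Y_j}\cap\overline{Y_{j'}}$ have dimension $\le k-2$), so that $\sum_j \deg(Y_j)=|L\cap W\cap Z(f)|$; and that $C:=L\cap\overline W$ is a curve of degree $D$ meeting $Z(F)$ in finitely many points, where $F$ is the degree-$e$ homogenization of $f$. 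Then $\sum_j\deg(Y_j)\le |C\cap Z(F)|$, and since $C\cap Z(F)$ is finite, no irreducible component $C_t$ of $C$ lies in $Z(F)$; pulling $F$ back along the normalization $\widetilde{C_t}\to C_t$ gives a nonzero section of a line bundle of degree $e\deg(C_t)$ on a smooth projective curve, which has at most $e\deg(C_t)$ zeros, and surjectivity of the normalization map yields $|C_t\cap Z(F)|\le e\deg(C_t)$. Summing over $t$ gives $\sum_j\deg(Y_j)\le e\sum_t\deg(C_t)=eD=\deg(f)\deg(W)$, which closes the induction.

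All the substance is in the case $s=1$, and the two ingredients I would need there are standard: that a generic linear section of an irreducible projective variety has the expected dimension and the same degree (so that slicing $\overline W$ down to a curve costs nothing), and the one-dimensional case of projective B\'ezout, namely that a form of degree $e$ has at most $e\deg(C)$ zeros on a projective curve $C$ not contained in its zero locus, which follows from the degree of line bundles on the normalization. The main obstacle, such as it is, is the bookkeeping required to select a single $L$ that is simultaneously generic for $W$ and for each of the finitely many components $Y_j$. Alternatively, the statement is a special case of the refined B\'ezout inequality of intersection theory, which gives $\sum_i m_i\deg(Z_i)\le \deg(W)\prod_j\deg(f_j)$ with intersection multiplicities $m_i\ge 1$ (an equality when the intersection is proper), from which the claim follows by dropping the $m_i$.
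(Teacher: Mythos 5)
The paper does not prove this lemma; it records it with a citation to Hartshorne [Theorem I.7.7], which is the projective B\'ezout theorem (with intersection multiplicities) for a variety cut by a single hypersurface, and the affine, multi-hypersurface inequality is tacitly deduced from it. Your argument is instead a self-contained elementary proof along the standard lines, and it is correct. The induction on $s$ is sound --- any $Z_i$ that happens to be a component of more than one $W_a\cap Z(f_s)$ only gets overcounted on the right-hand side, which is harmless for an inequality --- and the reduction by generic linear sections to the one-dimensional case, handled via normalization, is the usual textbook route. The one small point worth spelling out in the slicing step is that a generic $L$ of codimension $k-1$ cuts $\overline W$ in a \emph{reduced} curve (by Bertini even an irreducible one when $k\ge 2$), so that $\sum_t\deg(C_t)=D$; merely $\le D$, which follows from multiplicities being $\ge 1$, would of course also suffice. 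Your closing ``alternatively'' remark --- deduce the lemma by discarding multiplicities from a refined B\'ezout equality --- is in spirit what the paper does, but note that Hartshorne's Theorem I.7.7 only treats a single hypersurface, so even that route still requires the inductive reduction from your first paragraph.
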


Given an irreducible variety $V \subseteq \C^n$, a particularly important role will be played by the following quantities.

\begin{defi}[Partial degree]
\label{delta}
For an irreducible variety $V \subseteq \C^n$ and every $1 \le i \le n-\dim(V)$ we let $\delta_i(V)$ stand for the minimal integer $\delta$ for which we can find a finite set of polynomials $g_1,\ldots,g_t \in \C[x_1,\ldots,x_n]$ of degree at most $\delta$ such that $V \subseteq Z(g_1,\ldots,g_t)$ and the highest dimension of an irreducible component of $Z(g_1,\ldots,g_t)$ containing $V$ is equal to $n-i$. We sometimes abbreviate $\delta_{n-\dim(V)}(V)$ as $\delta(V)$ and call this the partial degree of $V$. By convention we also write $\delta_0(V)=0$ and $\delta_i(V)=\infty$ for every $i>n-\dim(V)$.
\end{defi} 

Clearly, these quantities satisfy the following simple relation.

\begin{lema}
\label{dorder}
For every irreducible variety $V$ we have $\delta_i(V) \ge \delta_{i-1}(V)$ for every $i$.
\end{lema}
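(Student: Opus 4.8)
The plan is to take a family of polynomials that is optimal for $\delta_i(V)$ and delete them one at a time, tracking how the dimension of the irreducible component through $V$ changes; a discrete intermediate value argument then produces a witness for $\delta_{i-1}(V)$ of no larger degree. First I would dispose of the trivial ranges: if $i=1$ then $\delta_{i-1}(V)=\delta_0(V)=0\le\delta_1(V)$ by the conventions of Definition \ref{delta}, and if $i>n-\dim(V)$ then $\delta_i(V)=\infty$, so there is nothing to prove in either case. So assume $2\le i\le n-\dim(V)$ and set $\delta=\delta_i(V)$; by definition we may choose $g_1,\ldots,g_t\in\C[x_1,\ldots,x_n]$ of degree at most $\delta$ with $V\subseteq Z(g_1,\ldots,g_t)$ and such that the largest dimension of an irreducible component of $Z(g_1,\ldots,g_t)$ containing $V$ equals $n-i$.

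For $0\le j\le t$ write $Z_j=Z(g_1,\ldots,g_j)$, with the convention $Z_0=\C^n$, and let $d_j$ denote the maximal dimension of an irreducible component of $Z_j$ containing $V$; this is well defined since $V$ is irreducible and $V\subseteq Z_t\subseteq Z_j$. Thus $d_0=n$ and $d_t=n-i$. The key step is the claim that $d_j-d_{j+1}\in\{0,1\}$ for every $j$. Indeed, the component of $Z_{j+1}$ through $V$ of dimension $d_{j+1}$ is an irreducible subset of $Z_j$ containing $V$, hence lies in some component of $Z_j$ through $V$, which gives $d_{j+1}\le d_j$; and if $W$ is a component of $Z_j$ through $V$ with $\dim W=d_j$, then $V\subseteq W\cap Z(g_{j+1})$, and by Krull's principal ideal theorem every irreducible component of $W\cap Z(g_{j+1})$ has dimension at least $d_j-1$, so the one containing $V$ forces $d_{j+1}\ge d_j-1$.

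With this, $d_0=n,d_1,\ldots,d_t=n-i$ is a non-increasing sequence of integers dropping by at most one at each step, so some index $0\le j\le t$ satisfies $d_j=n-(i-1)$; since $i\ge2$ we have $n-(i-1)<n=d_0$, so in fact $j\ge1$. Then $g_1,\ldots,g_j$ are polynomials of degree at most $\delta$ with $V\subseteq Z(g_1,\ldots,g_j)$ and with the highest-dimensional component through $V$ of dimension exactly $n-(i-1)$, so by minimality $\delta_{i-1}(V)\le\delta=\delta_i(V)$. I do not anticipate any real obstacle: the only delicate point is the unit-step claim for the $d_j$, which is precisely the standard fact that intersecting an irreducible variety with one hypersurface lowers its dimension by at most one, together with the observation that a component through $V$ always survives because $V\subseteq Z_t\subseteq Z_j$; the endpoint cases $i=1$ and $i>n-\dim(V)$ are handled directly by the stated conventions.
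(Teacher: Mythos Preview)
Your proof is correct and follows essentially the same approach as the paper: both start from an optimal family $g_1,\ldots,g_t$ for $\delta_i(V)$ and use Krull's Hauptidealsatz to locate a subfamily for which the maximal component through $V$ has dimension exactly $n-(i-1)$, thereby witnessing $\delta_{i-1}(V)\le\delta_i(V)$. Your version is simply more explicit, spelling out the discrete intermediate value step and the edge cases $i=1$ and $i>n-\dim(V)$ that the paper leaves implicit.
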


\begin{proof}
Let $g_1,\ldots,g_t$ be the polynomials in the definition of $\delta_i(V)$, so in particular the irreducible components of $Z(g_1,\ldots,g_t)$ containing $V$ of highest dimension have dimension $n-i$. Then, by Krull's Hauptidealsatz, there exists some subset $g_1,\ldots,g_r$ of these polynomials such that the irreducible components of $Z(g_1,\ldots,g_t)$ containing $V$ of the highest dimension have dimension $n-i+1$. This clearly implies that $\delta_i(V) \ge \delta_{i-1}(V)$.
\end{proof}

If $V$ is not necessarily irreducible we will use the following variant of the above definition of partial degree.

\begin{defi}
For an algebraic set $V \subseteq \C^n$ of dimension $d$ we write $\delta(V)$ for the smallest integer $\delta$ for which we can find polynomials $g_1,\ldots,g_t$ of degree at most $\delta$ such that every irreducible component of $V$ is also an irreducible component of $Z(g_1,\ldots,g_t)$.
\end{defi} 

Consider an algebraic set $V \subseteq \C^n$. To the ideal $I(V)$ of $V$ we can associate the affine Hilbert function 
$$H_{I(V)}(m) := \text{dim}_{\C} \left( \C[x_1,\ldots,x_n]_{\le m} / I(V)_{\le m} \right).$$
where $\C[x_1,\ldots,x_n]_{\le m}$ is the vector space of polynomials of degree at most $m$, while $I(V)_{\le m}$ are those members of $I(V)$ of degree at most $m$. Similarly, writing $I_{\R}(V)$ for the ideal of $\R[x_1,\ldots,x_n]$ consisting of the real polynomials of $I(V)$, we can consider the function
$$H_{I(V),{\R}}(m) := \text{dim}_{\R} \left( \R[x_1,\ldots,x_n]_{\le m} / I_{\R}(V)_{\le m} \right),$$
These functions are related by the following simple lemma.

\begin{lema}
\label{HI}
We have $H_{I(V),\R}(m) \ge H_{I(V)}(m)$ for every algebraic set $V \subseteq \C^n$ and every $m \ge 0$.
\end{lema}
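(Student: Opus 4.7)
The plan is to exhibit a spanning set for the complex quotient coming from a real basis of the real quotient, which immediately gives the desired inequality on dimensions. Set $A_\R = \R[x_1,\ldots,x_n]_{\le m}/I_\R(V)_{\le m}$ and $A_\C = \C[x_1,\ldots,x_n]_{\le m}/I(V)_{\le m}$, and let $N = H_{I(V),\R}(m) = \dim_\R A_\R$.

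First, I would pick an $\R$-basis $f_1,\ldots,f_N \in \R[x_1,\ldots,x_n]_{\le m}$ of $A_\R$. The key claim is that their images $\bar f_1,\ldots,\bar f_N$ span $A_\C$ as a $\C$-vector space. Given $p \in \C[x_1,\ldots,x_n]_{\le m}$, I would split it into real and imaginary parts $p = p_R + i p_I$ with $p_R, p_I \in \R[x_1,\ldots,x_n]_{\le m}$ (the degree bound is preserved because the real and imaginary parts of a polynomial of degree $\le m$ individually have degree $\le m$). Applying the basis property to $p_R$ and $p_I$ separately yields real scalars $a_i, b_i$ with $p_R - \sum a_i f_i \in I_\R(V)_{\le m}$ and $p_I - \sum b_i f_i \in I_\R(V)_{\le m}$, so that $p - \sum (a_i + i b_i) f_i \in I_\R(V)_{\le m} + i\, I_\R(V)_{\le m}$.

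The one point to check is the inclusion $I_\R(V)_{\le m} + i\, I_\R(V)_{\le m} \subseteq I(V)_{\le m}$, which is immediate: any $\C$-linear combination of elements of $I_\R(V)$ still vanishes on $V$ and the degree bound is preserved. Consequently $p \equiv \sum (a_i + i b_i) \bar f_i$ in $A_\C$, which establishes that $\{\bar f_i\}_{i=1}^N$ spans $A_\C$ over $\C$, and therefore $H_{I(V)}(m) = \dim_\C A_\C \le N = H_{I(V),\R}(m)$, as required.

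No step here is a serious obstacle; the only thing that needs a moment of care is ensuring the degree truncations on both sides are compatible, which they are because the decomposition $p = p_R + i p_I$ and the inclusion $I_\R(V)_{\le m} + i\, I_\R(V)_{\le m} \subseteq I(V)_{\le m}$ both respect the filtration by degree. (Equivalently, one may phrase this as observing that the natural $\C$-algebra surjection $A_\R \otimes_\R \C \twoheadrightarrow A_\C$ induced by $I_\R(V) \otimes_\R \C \subseteq I(V)$ cannot increase dimension, but the spanning argument above avoids invoking tensor products.)
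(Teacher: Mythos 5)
Your argument is correct and is essentially identical to the paper's proof: both take a real basis of the real quotient, decompose an arbitrary complex polynomial into real and imaginary parts, reduce each part modulo $I_{\R}(V)_{\le m}$, and combine to show the images span the complex quotient. The only difference is that you explicitly verify the inclusion $I_{\R}(V)_{\le m} + {\bf i}\, I_{\R}(V)_{\le m} \subseteq I(V)_{\le m}$, which the paper leaves implicit.
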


\begin{proof}
Let $p_1,\ldots,p_r$ be a maximal subset of $\R[x_1,\ldots,x_n]_{\le m}$ projecting to linearly independent elements of $\R[x_1,\ldots,x_n]_{\le m} / I_{\R}(V)_{\le m}$, so in particular $H_{I(V),\R}(m) = r$. Let $q$ be any element of $\C[x_1,\ldots,x_n]_{\le m}$, which we can write as $q=q_1 + {\bf i} q_2$ with $q_1,q_2 \in \R[x_1,\ldots,x_n]_{\le m}$ and {\bf i} the imaginary unit. We know we can find coefficients $a_1,\ldots,a_r,b_1,\ldots,b_r \in \R$, such that both $q_1 - \sum_{i=1}^r a_i p_i$ and $q_2 - \sum_{i=1}^r b_i p_i$ belong to $I_{\R}(V)_{\le m}$. It follows that $q - \sum_{i=1}^r (a_i+{\bf i} b_i) p_i \in I(V)_{\le m}$ and therefore $H_{I(V)}(m) \le r = H_{I(V),\R}(m)$, as desired.
\end{proof}

Similarly to \cite{BS}, we will be using the following general lower bound for Hilbert functions.

\begin{teo}
\label{8tool}
Let $V \subseteq \C^n$ be an algebraic set of dimension $d$. Then, there exists some constant $c_0 \gtrsim_n 1$ such that, for every $m \ge 2 (n-d) \delta(V)$, we have the bound
$$ H_{I(V)}(m) \ge c_0 m^d \deg(V).$$
\end{teo}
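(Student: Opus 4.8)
The plan is to reduce the statement to the irreducible case and then establish the lower bound $H_{I(W)}(m) \gtrsim_n m^d \deg(W)$ for an irreducible $d$-dimensional variety $W$ when $m \ge 2(n-d)\delta(W)$. The reduction is immediate: if $V = V_1 \cup \cdots \cup V_s$ with the $V_j$ irreducible of dimension $d$, then a polynomial of degree at most $m$ lies in $I(V)_{\le m}$ iff it lies in each $I(V_j)_{\le m}$, so $H_{I(V)}$ is subadditive over intersections in a way that one can exploit; more simply, one picks a generic point in each $V_j$ and imposes independent vanishing conditions, or one uses the fact that the $V_j$ can be separated by a hyperplane arrangement so that the evaluation maps on suitable subspaces are essentially independent. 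Summing the bounds for the components and using $\delta(V) \ge \delta(V_j)$ together with $\deg(V) = \sum_j \deg(V_j)$ then yields the claim with a loss of only an $n$-dependent constant. The more serious point is the irreducible case, where I would proceed by induction on the codimension $n-d$.

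For the base case $d = n$ there is nothing to prove ($\deg(V)=1$ and $H_{I(\C^n)}(m) = \binom{m+n}{n} \gtrsim_n m^n$). For the inductive step, let $W$ be irreducible of dimension $d < n$ and set $\delta = \delta(W) = \delta_{n-d}(W)$. By definition of $\delta_{n-d}(W)$ there are polynomials $g_1,\ldots,g_t$ of degree at most $\delta$ with $W$ an irreducible component of $Z(g_1,\ldots,g_t)$, hence by Krull's Hauptidealsatz (as in the proof of Lemma \ref{dorder}) we can select $n-d$ of them, say $g_1,\ldots,g_{n-d}$, so that $W$ is an irreducible component of $Z(g_1,\ldots,g_{n-d})$. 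Let $W' \supseteq W$ be an irreducible component of $Z(g_1,\ldots,g_{n-d-1})$ containing $W$; then $\dim W' = d+1$, $\delta_{n-d-1}(W') \le \delta$, and $W$ is a component of $Z(g_{n-d}) \cap W'$. By the inductive hypothesis applied to $W'$ (whose partial degree is $\le \delta$, so the threshold $2(n-d-1)\delta \le 2(n-d)\delta \le m$ is met), we have $H_{I(W')}(m') \gtrsim_n (m')^{d+1} \deg(W')$ for $m' \ge 2(n-d-1)\delta$. The idea is now to pass from $W'$ to its hypersurface section $W$: choosing $m' \sim m$, the quotient $\C[x]_{\le m'}/I(W')_{\le m'}$ has dimension $\gtrsim (m')^{d+1}\deg(W')$, and multiplication by $g_{n-d}$, which has degree $\le \delta \le m/(2(n-d)) \ll m$, together with a slicing argument over the roughly $m/\delta$ "layers" of degree increments of size $\delta$, should produce $\gtrsim (m/\delta)\cdot \delta \cdot (m/\delta)^d \deg(W') \gtrsim m^d \deg(W')$ independent functions on $W$; here one uses that $W$ being a component of $Z(g_{n-d})\cap W'$ forces $g_{n-d}$ to not vanish on $W'$, so multiplication by $g_{n-d}$ is injective modulo $I(W')$, and one controls $\deg(W) \le \delta \deg(W')$ by Bezout (Lemma \ref{Bezout}) — but we only want a \emph{lower} bound on $H_{I(W)}$ in terms of $\deg(W)$, and the relation between $\deg(W)$ and $\deg(W')$ must be used in the correct direction.

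The main obstacle I anticipate is exactly this last bookkeeping: extracting a clean lower bound on $H_{I(W)}(m)$ proportional to $\deg(W)$ rather than $\deg(W')$, since passing to a hypersurface section can either increase or fail to increase the degree and the naive estimate only relates $H_{I(W')}$ to $H_{I(W)}$ through the (one-directional) Bezout bound $\deg(W) \le \deg(g_{n-d})\deg(W')$. The right way around this is presumably to run the induction so that it directly tracks a product $\prod_{i=1}^{n-d}\deg(g_i)$ of degrees of the defining polynomials — invoking Theorem \ref{3} (or its refinement in later sections) to know that $W$ can be cut out, up to extra components, by polynomials whose degrees multiply to $\lesssim_n \deg(W)$, with each individual degree $\le \delta(W)$ — and then to establish the Hilbert-function lower bound $H_{I(Z(g_1,\ldots,g_{n-d}))}(m) \gtrsim_n m^d \prod_i \deg(g_i)$ for a \emph{complete intersection} of the $g_i$ by an explicit generating-function / Hilbert-series computation (the Hilbert series of $\C[x]/(g_1,\ldots,g_{n-d})$ is $\prod_i(1-z^{\deg g_i})/(1-z)^n$ when the $g_i$ form a regular sequence, giving the leading term $\frac{\prod_i \deg g_i}{d!}\,m^d$), and finally using that $I(Z(g_1,\ldots,g_{n-d})) \supseteq (g_1,\ldots,g_{n-d})$ gives $H_{I(W)}(m) \ge H_{I(Z(g_1,\ldots,g_{n-d}))}(m)$ minus a correction from the non-radicality and the extra components, all of which is absorbed once $m \ge 2(n-d)\delta(V)$. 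Comparing $\prod_i \deg(g_i)$ with $\deg(W)$ via Theorem \ref{3} then closes the argument; the constant $c_0 \gtrsim_n 1$ emerges from the explicit $1/d!$ factors and the finitely many degree-$\le n$ corrections.
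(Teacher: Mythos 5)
The paper does not give a self-contained proof of Theorem \ref{8tool}; it simply invokes a result of Chardin and Philippon (\cite[Corollaire 3]{CP}), so any attempt at a proof from scratch will necessarily take a different route. Unfortunately your route has two genuine flaws that make it fail as written.

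First, the appeal to Theorem \ref{3} is circular. In this paper Theorem \ref{3} is deduced from Theorem \ref{clases}, whose proof rests on Theorem \ref{GSiegel}, whose base case is Lemma \ref{complexcase}, which relies on Lemma \ref{hk} --- and Lemma \ref{hk} is proved by invoking Theorem \ref{8tool} itself. So Theorem \ref{3}, and all of the ``refinements in later sections'' you gesture at, are downstream of the very statement you are trying to prove and cannot be used as ingredients here.

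Second, and more decisively, the complete-intersection endgame runs the crucial inequality in the wrong direction. Write $J = (g_1,\ldots,g_{n-d})$. Since $W$ is an irreducible component of $Z(J)$, we have $W \subseteq Z(J)$, hence $I(W) \supseteq I(Z(J)) = \sqrt{J} \supseteq J$, and therefore $H_{I(W)}(m) \le H_{I(Z(J))}(m) \le H_{J}(m)$: enlarging the ideal only shrinks the Hilbert function. Your computation of the Hilbert series $\prod_i (1-z^{\deg g_i})/(1-z)^n$ thus gives an \emph{upper} bound for $H_{I(W)}(m)$, not the lower bound required. The sentence asserting that ``$I(Z(g_1,\ldots,g_{n-d})) \supseteq (g_1,\ldots,g_{n-d})$ gives $H_{I(W)}(m) \ge H_{I(Z(g_1,\ldots,g_{n-d}))}(m)$'' has the conclusion backwards: $W \subseteq Z(g_1,\ldots,g_{n-d})$ forces $I(W) \supseteq I(Z(g_1,\ldots,g_{n-d}))$, which yields $\le$, not $\ge$. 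This is exactly why a lower bound of the form in Theorem \ref{8tool} is nontrivial --- passing to a smaller variety only makes the Hilbert function smaller, so one cannot reduce to the complete intersection this way; the Chardin--Philippon argument instead proceeds via Castelnuovo--Mumford regularity estimates. (Separately, the reduction to the irreducible case is also not as automatic as claimed: the exact sequence for $I(V_1)\cap I(V_2)$ introduces a correction term governed by the Hilbert function of $V_1\cap V_2$, whose degree can be as large as $\deg(V_1)\deg(V_2)$, and it is not clear that the threshold $m\ge 2(n-d)\delta(V)$ suffices to absorb it.)
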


\begin{proof}
This follows from \cite[Corollaire 3]{CP}.
\end{proof}

\section{Polynomial partitioning for varieties}

Given an irreducible variety $V \subseteq \C^n$ of dimension $d$ and an integer $0 \le i \le n-d$, we write 
$$\Delta_i(V) = \max \left\{ \frac{\deg(V)}{\delta_{i+1}(V) \cdots \delta_{n-d}(V)} , 1 \right\},$$
with the understanding that $\Delta_{n-d}(V)=\deg(V)$.
Notice that we have
\begin{equation}
\label{basic1}
\Delta_{i+1}(V) \le \delta_{i+1}(V)\Delta_{i}(V),
\end{equation}
with equality holding whenever $\Delta_{i}(V)>1$.

From now on we let $c_0$ be as in Theorem \ref{8tool}. We say a non-negative integer $i$ is admissible with respect to $V$ if $\delta_{i+1}(V) > 2 i \delta_i(V)$. Notice in particular that if $i$ is not admissible, then $\delta_i(V) \gtrsim_n \delta_{i+1}(V)$. Writing
$$ \mathcal{R}_i(V) = [c_1 \delta_i(V)^{n-i} \Delta_i(V), \frac{c_0}{2} \delta_{i+1}(V)^{n-i} \Delta_i (V) ],$$
for every $0 \le i \le n-d$ and an appropriate $c_1>0$, we then have the following simple observation.

\begin{lema}
\label{ref}
We can choose $c_1 \gtrsim_n 1$ such that, for every irreducible variety $V \subseteq \C^n$, every positive integer lies inside an interval of the form $\mathcal{R}_i(V)$ with $i$ admissible with respect to $V$.
\end{lema}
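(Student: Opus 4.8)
The plan is to show that already the union of the intervals $\mathcal{R}_i(V)$ over admissible $i$ contains the whole ray $[0,+\infty)$. I would first list the admissible indices as $0 = i_0 < i_1 < \cdots < i_m = n-d$: here $i_0 = 0$ is admissible because $\delta_1(V)\ge 1 > 0 = 2\cdot 0\cdot\delta_0(V)$, and $i_m = n-d$ is admissible because $\delta_{n-d+1}(V) = \infty$ exceeds the finite quantity $2(n-d)\delta(V)$. Since $\delta_0(V) = 0$ and $\delta_{n-d+1}(V) = \infty$ (and $\Delta_{n-d}(V) = \deg(V)$), the interval $\mathcal{R}_{i_0}(V) = \mathcal{R}_0(V)$ has left endpoint $0$ and $\mathcal{R}_{i_m}(V) = \mathcal{R}_{n-d}(V)$ has right endpoint $+\infty$. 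So the lemma follows once I check that (i) each $\mathcal{R}_{i_t}(V)$ is nonempty and (ii) the left endpoint of $\mathcal{R}_{i_{t+1}}(V)$ is at most the right endpoint of $\mathcal{R}_{i_t}(V)$ for every $0\le t<m$; indeed, (i) and (ii) force $\bigcup_t\mathcal{R}_{i_t}(V)$ to be a single interval starting at $0$ and unbounded above, hence containing every positive integer.

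For (ii), fix consecutive admissible indices $i := i_t < j := i_{t+1}$, so that $i+1,\dots,j-1$ are non-admissible, giving $\delta_{k+1}(V)\le 2k\,\delta_k(V)$ for every $i+1\le k\le j-1$. Chaining these inequalities, and using $j\le n$, I get $\delta_j(V)\le\big(\prod_{k=i+1}^{j-1}2k\big)\delta_{i+1}(V)\lesssim_n\delta_{i+1}(V)$; combined with the monotonicity $\delta_{i+1}(V)\le\cdots\le\delta_j(V)$ from Lemma \ref{dorder}, this shows $\delta_k(V)\sim_n\delta_{i+1}(V)$ for all $i+1\le k\le j$. Iterating the inequality \eqref{basic1} gives $\Delta_j(V)\le\big(\prod_{k=i+1}^j\delta_k(V)\big)\Delta_i(V)$, so
$$c_1\,\delta_j(V)^{n-j}\Delta_j(V)\le c_1\,\delta_j(V)^{n-j}\Big(\prod_{k=i+1}^{j}\delta_k(V)\Big)\Delta_i(V)\lesssim_n c_1\,\delta_{i+1}(V)^{n-i}\Delta_i(V),$$
the implied constant depending only on $n$. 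Since the right endpoint of $\mathcal{R}_i(V)$ is exactly $\tfrac{c_0}{2}\delta_{i+1}(V)^{n-i}\Delta_i(V)$ and $c_0\gtrsim_n 1$ by Theorem \ref{8tool}, taking $c_1\gtrsim_n 1$ small enough — in particular with $c_1\le c_0/2$ — makes the displayed right-hand side at most $\tfrac{c_0}{2}\delta_{i+1}(V)^{n-i}\Delta_i(V)$, which is (ii). The same bound $c_1\le c_0/2$, together with $\delta_i(V)\le\delta_{i+1}(V)$, yields $c_1\delta_i(V)^{n-i}\Delta_i(V)\le\tfrac{c_0}{2}\delta_{i+1}(V)^{n-i}\Delta_i(V)$, which is (i).

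The one genuinely substantive point is the first claim of the previous paragraph: that between two consecutive admissible indices all partial degrees $\delta_k(V)$ agree up to $O_n(1)$ factors. This is exactly what lets the saving of $\delta_{i+1}(V)$ over $\delta_i(V)$ at the top of $\mathcal{R}_i(V)$ absorb the $\delta_j(V)^{n-j}$ growth at the bottom of $\mathcal{R}_j(V)$; everything else is bookkeeping with the conventions $\delta_0(V)=0$ and $\delta_{n-d+1}(V)=\infty$, the degenerate case $d=n$ being immediate since then $i_0=i_m=0$ and $\mathcal{R}_0(V)=[0,+\infty)$.
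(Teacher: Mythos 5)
Your proof is correct and follows essentially the same route as the paper's: you note $0$ and $n-d$ are always admissible, show $\delta_j(V)\lesssim_n\delta_{i+1}(V)$ for consecutive admissible indices $i<j$ by chaining the non-admissibility inequalities, and then use inequality~(\ref{basic1}) together with a sufficiently small $c_1\gtrsim_n 1$ to make consecutive intervals $\mathcal{R}_i(V)$, $\mathcal{R}_j(V)$ overlap. You have simply spelled out the bookkeeping that the paper leaves implicit.
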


\begin{proof}
This follows from observing that for every $0 \le i \le n-d$, either $i$ is admissible with respect to $V$ or we have $\delta_s(V) \lesssim_n \delta_i(V)$ for the smallest admissible $s$ with $s > i$. Notice that $0$ and $n-d$ are always admissible, since $\delta_0(V)=0$ and $\delta_{n-d+1}(V) = \infty$. Choosing $c_1>0$ sufficiently small with respect to $n$ and using (\ref{basic1}), we can then guarantee that $\mathcal{R}_i(V) \cap \mathcal{R}_j(V) \neq 0$ if $i<j$ are both admissible with respect to $V$ and there is no other admissible integer in between them, yielding the result.
\end{proof}

Given an irreducible variety $V$ and a positve integer $M$, we write $i_V(M)$ for the smallest admissible $i$ such that $M^{n-i} \Delta_i(V) \in \mathcal{R}_i(V)$.
Clearly, we have
\begin{equation}
\label{ivmrange}
\delta_{i_V(M)}(V) \lesssim_{n} M \lesssim_n \delta_{i_V(M)+1}(V).
\end{equation}

We will prove the following more general form of Theorem \ref{1}.

\begin{teo}[Polynomial partitioning for varieties]
\label{30par}
Let $V \subseteq \C^n$ be an irreducible variety of dimension $d$ and $S$ a finite set of points inside of $V(\R)$. Then, given any integer $M \ge 1$, we can find some polynomial $g \in \R[x_1,\ldots,x_n] \setminus I(V)$ of degree $O_{n}(M)$ such that each connected component of $\R^n \setminus Z(P)$ contains 
$$\lesssim_{n} \frac{|S|}{M^{n-i_V(M)}\Delta_{i_V(M)}(V)}$$
elements of $S$.
\end{teo}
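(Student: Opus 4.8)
The plan is to reduce Theorem \ref{30par} to the classical polynomial partitioning statement over affine space of dimension $d$, carried out inside the variety $V$ via a generic linear projection, while extracting the gain coming from $\deg(V)$ and the partial degrees $\delta_i(V)$ from Theorem \ref{8tool}. Concretely, I would first set $i = i_V(M)$ and recall from \eqref{ivmrange} that $\delta_i(V) \lesssim_n M \lesssim_n \delta_{i+1}(V)$, so that $M^{n-i}\Delta_i(V)$ lies in the middle of the interval $\mathcal{R}_i(V) = [c_1 \delta_i(V)^{n-i}\Delta_i(V), \tfrac{c_0}{2}\delta_{i+1}(V)^{n-i}\Delta_i(V)]$. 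The quantity we are allowed to beat in the partition is $|S| / (M^{n-i}\Delta_i(V))$; the point is that $M^{n-i}\Delta_i(V)$ is comparable (up to $O_n(1)$) to $m^d \deg(V)/(\text{lower-dimensional saving})$ for a suitable choice of degree $m \sim_n M$, and this is exactly the order of magnitude of $H_{I(V)}(m)$ furnished by Theorem \ref{8tool} once we know $m \ge 2(n-d)\delta(V)$.

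The main step is the partitioning argument itself. I would run the standard ham-sandwich / polynomial partitioning induction (as in Guth--Katz, or the axiomatic version in the literature) but using polynomials restricted to $V$: at each stage one has a collection of at most $H_{I(V)}(m_j)$ "degrees of freedom" available to bisect the remaining point sets, because the relevant vector space is $\R[x_1,\ldots,x_n]_{\le m_j}/I_\R(V)_{\le m_j}$, whose dimension is $H_{I(V),\R}(m_j) \ge H_{I(V)}(m_j)$ by Lemma \ref{HI}. After roughly $\log_2(\text{number of pieces})$ rounds with geometrically increasing degrees $m_j$, the total degree of the product polynomial $g$ is $O_n(m) = O_n(M)$, $g \notin I(V)$ by construction (each factor is chosen not to vanish on $V$, and one can ensure the product does not either, e.g. by working with a component of $V(\R)$ of top real dimension and invoking that $V(\R)$ is Zariski-dense enough — or simply noting the factors are nonzero mod $I(V)$ so their product is too since $I(V)$ is prime), and each connected component of $\R^n \setminus Z(g)$ meets at most $|S| / 2^{(\text{number of rounds})}$ points of $S$. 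Choosing the number of rounds so that $2^{\text{rounds}} \sim_n H_{I(V)}(m) \gtrsim_n m^d \deg(V) \sim_n M^d \deg(V)$, and then relating $M^d\deg(V)$ to $M^{n-i}\Delta_i(V)$ using the definition $\Delta_i(V) = \max\{\deg(V)/(\delta_{i+1}(V)\cdots\delta_{n-d}(V)),1\}$ together with $M \gtrsim_n \delta_i(V)$ and $M \lesssim_n \delta_{i+1}(V), \ldots, \delta_{n-d}(V)$ (the last inequalities hold since $i$ is the relevant index and the $\delta_j$ are increasing by Lemma \ref{dorder}), gives the claimed bound.

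I would handle the bookkeeping of the relation between $M^{n-i}\Delta_i(V)$ and $M^d\deg(V)$ as follows: when $\Delta_i(V) > 1$ we have $\Delta_i(V) = \deg(V)/(\delta_{i+1}(V)\cdots\delta_{n-d}(V))$, so $M^{n-i}\Delta_i(V) = M^{n-i}\deg(V)/(\delta_{i+1}(V)\cdots\delta_{n-d}(V))$, and since $n-d$ of the factors in the denominator, namely $\delta_{i+1}(V),\ldots,\delta_{n-d}(V)$ — there are $n-d-i$ of them — each satisfy $\delta_j(V) \gtrsim_n M$, we get $\delta_{i+1}(V)\cdots\delta_{n-d}(V) \gtrsim_n M^{n-d-i}$, hence $M^{n-i}\Delta_i(V) \lesssim_n M^{n-i} \deg(V) / M^{n-d-i} = M^d \deg(V)$; conversely the reverse bound is immediate, so the two are comparable. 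The case $\Delta_i(V)=1$ forces $i$ to be essentially $n-d$ and one checks $M^{n-i}\Delta_i(V) \sim_n M^d\deg(V)$ directly or absorbs it. This identification is what makes the Hilbert-function lower bound from Theorem \ref{8tool} plug in cleanly.

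The part I expect to be the main obstacle is making the restricted polynomial partitioning induction rigorous when $V$ is an arbitrary (possibly singular, lower-dimensional) complex variety and $S \subseteq V(\R)$ — in particular, ensuring at each bisection step that the separating polynomial can be chosen in $\R[x_1,\ldots,x_n]_{\le m_j} \setminus I_\R(V)_{\le m_j}$ (so it genuinely cuts $V$ and does not trivially vanish on it), that the dimension count via $H_{I(V),\R}(m_j) \ge H_{I(V)}(m_j) \gtrsim_n m_j^d \deg(V)$ is valid in the regime $m_j \ge 2(n-d)\delta(V)$ demanded by Theorem \ref{8tool} (small $M$, i.e. $M$ below this threshold, may need separate and essentially trivial treatment since then the bound to prove is weak), and that the final product polynomial $g$ does not lie in $I(V)$ — this last point uses that $I(V)$ is prime for irreducible $V$, so a product of polynomials none of which lies in $I(V)$ also avoids $I(V)$. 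Assembling these ingredients with the correct choice of the geometric sequence of degrees $m_j$ and number of rounds is the technical heart of the argument.
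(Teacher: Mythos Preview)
Your proposal has a genuine gap that undermines the argument precisely in the regime where Theorem \ref{30par} says something beyond Theorem \ref{1}. The claimed equivalence $M^{n-i}\Delta_i(V) \sim_n M^d\deg(V)$ is false: you correctly show $M^{n-i}\Delta_i(V) \lesssim_n M^d\deg(V)$ using $\delta_j(V)\gtrsim_n M$ for $j>i$, but the ``immediate'' reverse bound would require $\delta_j(V)\lesssim_n M$ for all $j>i$, which is simply not available --- the $\delta_j(V)$ for $j>i+1$ can be arbitrarily large compared to $M$. Consequently you cannot replace the target $M^{n-i}\Delta_i(V)$ by $M^d\deg(V)$, and invoking Theorem \ref{8tool} directly on $V$ (which needs $m\ge 2(n-d)\delta(V)$, i.e.\ $M\gtrsim\delta(V)$, i.e.\ $i_V(M)=n-d$) only recovers Theorem \ref{1}. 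Your remark that ``small $M$ may need separate and essentially trivial treatment since then the bound to prove is weak'' is exactly backwards: the case $M<\delta(V)$ is the entire extra content of Theorem \ref{30par}.

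The missing idea, which the paper supplies in Lemma \ref{hk}, is that when $2s\,\delta_s(V)\le m<\delta_{s+1}(V)$ one does not apply Theorem \ref{8tool} to $V$ at all. Instead one observes that among the $(n{-}s)$-dimensional components $V_1,\dots,V_r$ of $Z(g_1,\dots,g_t)$ containing $V$ (with $g_j$ as in the definition of $\delta_s(V)$), there must exist some $V_1$ with $I(V)_{\le m}=I(V_1)_{\le m}$; otherwise one could cut each $V_j$ by a degree-$\le m$ polynomial in $I(V)$ and contradict $m<\delta_{s+1}(V)$. One then applies Theorem \ref{8tool} to the higher-dimensional variety $V_1$ (for which the degree threshold $2s\,\delta(V_1)\le m$ is now met) to get $H_{I(V)}(m)=H_{I(V_1)}(m)\ge c_0 m^{n-s}\deg(V_1)\ge c_0 m^{n-s}\Delta_s(V)$, the last inequality being a separate Bezout-type argument. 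With this in hand, the iterative bisection must use, at each round $i$, the admissible index $t$ appropriate to the current number $2^{i-1}$ of pieces (not the fixed $i_V(M)$), and the final degree bound $\deg(g)\lesssim_n M$ comes from summing the resulting geometric pieces across the different ranges of $t$. Your single-scale approach with $t=i_V(M)$ throughout would also make this degree sum blow up.
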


Notice that if $M \ge \delta(V)$ then, after multiplying $M$ by a sufficiently large $O_n(1)$ constant if necessary, we have $i_V(M)=n-d$. Since $\Delta_{n-d}(V)=\deg(V)$, we see that Theorem \ref{1} indeed follows from Theorem \ref{30par}.

We say a polynomial $g$ bisects a finite set $S \subseteq \R^n$ if we have 
$$ \left| \left\{ s \in S : g(s)>0 \right\} \right| \le |S|/2,$$
and
$$ \left| \left\{ s \in S : g(s)<0 \right\} \right| \le |S|/2.$$
Notice this does not exclude the possibility that a lot of the points actually lie on the zero set of $g$. Let us now state the well-known ham-sandwich theorem.

\begin{lema}[Ham-sandwich theorem]
\label{HS}
Let $S_1,\ldots, S_n$ be finite sets of points in $\R^n$. Then there exists a hyperplane bisecting every $S_i$.
\end{lema}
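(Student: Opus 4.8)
The plan is to deduce this from the Borsuk--Ulam theorem, in the classical way: first establish a version of the statement for absolutely continuous measures, and then recover the statement for finite point sets by a perturbation-and-limit argument.

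First I would parametrise the hyperplanes of $\R^n$ by the unit sphere. For $x \in \R^n$ write $\hat{x} = (1,x_1,\ldots,x_n) \in \R^{n+1}$, and for $a=(a_0,\ldots,a_n)$ in the unit sphere $\mathbb{S}^n \subseteq \R^{n+1}$ let $H_a = \{x \in \R^n : \langle a,\hat{x}\rangle = 0\}$, with open sides $H_a^{+} = \{x : \langle a,\hat{x}\rangle > 0\}$ and $H_a^{-} = \{x : \langle a,\hat{x}\rangle < 0\}$. Antipodal points of $\mathbb{S}^n$ give the same hyperplane with the two sides swapped, and every affine hyperplane of $\R^n$ arises this way (the leftover directions $a$ with $a_1=\cdots=a_n=0$ give the empty or full hyperplane, which does no harm). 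Next, fix $\epsilon>0$ smaller than half the minimal distance between distinct points of $\bigcup_i S_i$, and for each $i$ put $\mu_i^{\epsilon} = \sum_{s \in S_i} \lambda_{s,\epsilon}$, where $\lambda_{s,\epsilon}$ is the uniform probability measure on the ball $B(s,\epsilon)$; thus $\mu_i^{\epsilon}$ is absolutely continuous of total mass $|S_i|$, and the balls involved are pairwise disjoint. Define $f_{\epsilon}:\mathbb{S}^n \to \R^n$ whose $i$-th coordinate is $\mu_i^{\epsilon}(H_a^{+}) - \mu_i^{\epsilon}(H_a^{-})$. Since each $\mu_i^{\epsilon}$ gives zero mass to every hyperplane, dominated convergence shows $f_{\epsilon}$ is continuous, and it is visibly odd, so the Borsuk--Ulam theorem yields some $a_{\epsilon} \in \mathbb{S}^n$ with $f_{\epsilon}(a_{\epsilon}) = 0$; that is, $H_{a_\epsilon}$ leaves mass exactly $|S_i|/2$ on each side, for every $i$.

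Finally I would let $\epsilon \to 0$ along a sequence and, using compactness of $\mathbb{S}^n$, pass to a subsequence along which $a_{\epsilon} \to a \in \mathbb{S}^n$; the claim is that $H_a$ bisects every $S_i$. Split $S_i = S_i^{+} \sqcup S_i^{-} \sqcup S_i^{0}$ according to the sign of $\langle a,\hat{s}\rangle$. If $s \in S_i^{+}$ then $\langle a,\hat{s}\rangle > 0$ strictly, so for $\epsilon$ sufficiently small in the subsequence one has $\langle a_{\epsilon},\hat{x}\rangle>0$ for all $x\in B(s,\epsilon)$, hence $B(s,\epsilon) \subseteq H_{a_\epsilon}^{+}$; as the balls are disjoint this forces $|S_i^{+}| \le \mu_i^{\epsilon}(H_{a_\epsilon}^{+}) = |S_i|/2$, and symmetrically $|S_i^{-}| \le |S_i|/2$, which is exactly the bisection property.

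The only real difficulty is the one the perturbation step is designed to overcome: the count of points of $S_i$ lying strictly on one side of $H_a$ is not a continuous function of $a$, so Borsuk--Ulam cannot be applied to the point sets directly; one must smooth the masses, apply the theorem there, and then take a limit, keeping in mind that the limiting hyperplane may well pass through several of the points — which the definition of "bisect" explicitly permits.
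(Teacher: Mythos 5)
The paper does not prove Lemma \ref{HS}; it simply records the ham-sandwich theorem as a well-known result and uses it, so there is no in-paper argument to compare against. Your proposal is the standard proof: lift $\R^n$ to $\R^{n+1}$ via $\hat{x}=(1,x)$ so affine hyperplanes are parametrised by $\mathbb{S}^n$ with antipodal points giving the same hyperplane with sides swapped, replace each $S_i$ by an absolutely continuous measure supported on pairwise-disjoint small balls, apply Borsuk--Ulam to the continuous odd map $a\mapsto(\mu_i^{\epsilon}(H_a^+)-\mu_i^{\epsilon}(H_a^-))_{i}$, and pass to a subsequential limit $a_{\epsilon}\to a$ as $\epsilon\to 0$. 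The limit step is carried out correctly: a point $s$ with $\langle a,\hat{s}\rangle>0$ eventually has its whole ball $B(s,\epsilon)$ in $H_{a_\epsilon}^+$, and since those balls are disjoint and the smoothed masses on each open side are exactly $|S_i|/2$, one gets $|S_i^{\pm}|\le|S_i|/2$, which is precisely the (permissive) definition of bisection the paper uses. The degenerate directions $a_1=\cdots=a_n=0$ you flag are indeed harmless: a zero of $f_{\epsilon}$ or a limit $a$ there would force every $S_i$ to be empty, in which case any hyperplane works. The argument is correct and complete.
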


For the proof of Theorem \ref{30par} we will need to establish the following variant of the polynomial ham-sandwich theorem for sets of points lying inside a variety.

\begin{teo}
\label{30lem}
Let $V \subseteq \C^n$ be an irreducible variety of dimension $d$ and let $S_1,\ldots,S_k$ be finite subsets of $V(\R)$. Let $s$ be an admissible integer with respect to $V$ such that $k \in \mathcal{R}_s(V)$. Then there exists a real polynomial $g \notin I(V)$ of degree at most 
\begin{equation}
\label{Cnd}
\lesssim_{n} \left( \frac{k}{\Delta_s(V)} \right)^{\frac{1}{n-s}}
\end{equation}
that bisects every $S_i$.
\end{teo}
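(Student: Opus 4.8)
\textbf{Proof proposal for Theorem \ref{30lem}.}

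The plan is to reduce the polynomial ham-sandwich problem inside $V$ to the ordinary ham-sandwich theorem (Lemma \ref{HS}) by means of a Veronese-type embedding, where the role usually played by the full space of monomials of bounded degree is replaced by a vector space of functions large enough to separate the given sets but small enough to keep the degree under control. Concretely, fix a degree parameter $m$ to be chosen later and consider the vector space $\R[x_1,\ldots,x_n]_{\le m}/I_{\R}(V)_{\le m}$, whose dimension is $H_{I(V),\R}(m)$. Choosing a basis $p_1,\ldots,p_N$ of this space (with $N = H_{I(V),\R}(m)$, and noting $p_1 \equiv 1$ may be assumed among them), we obtain a map $\varphi \colon V(\R) \to \R^{N}$ sending $x \mapsto (p_1(x),\ldots,p_N(x))$. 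Pushing each $S_i$ forward to $\varphi(S_i) \subseteq \R^{N}$ and applying Lemma \ref{HS} in $\R^{N}$, we get an affine hyperplane $\sum_{j} a_j y_j = a_0$ bisecting every $\varphi(S_i)$; pulling back, the polynomial $g = \sum_j a_j p_j - a_0 p_1 \in \R[x_1,\ldots,x_n]_{\le m}$ bisects every $S_i$, and $g \notin I(V)$ precisely because the $p_j$ are linearly independent modulo $I_{\R}(V)_{\le m}$ and the $a_j$ are not all zero on the quotient. This works provided $N \ge k$, i.e. $H_{I(V),\R}(m) \ge k$.

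The remaining task is to show that one can take $m \lesssim_n (k/\Delta_s(V))^{1/(n-s)}$ while still guaranteeing $H_{I(V),\R}(m) \ge k$. Here is where the admissibility of $s$ and the hypothesis $k \in \mathcal{R}_s(V)$ enter. I would first pass to a $(n-s)$-dimensional algebraic set: since $\delta_s(V) < \delta_{s+1}(V)$ in the relevant range, there exist polynomials of degree at most $\delta_{s+1}(V)$ cutting out, among other components, a $(n-s)$-dimensional variety $W$ containing $V$; by Bezout (Lemma \ref{Bezout}) one controls $\deg(W) \lesssim_n \delta_{s+1}(V)^{n-s}$, and in fact the natural normalization will give $\deg(W) \sim_n \delta_{s+1}(V)^{n-s}$ up to the right power, but what we actually want is a $W \supseteq V$ of dimension $n-s$ with $\deg(W)$ comparable to $\Delta_s(V)$ times a bounded power of $\delta_{s+1}(V)$ — this is the technical heart. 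Using $H_{I(V),\R}(m) \ge H_{I(V)}(m) \ge H_{I(W)}(m)$ (Lemma \ref{HI} together with the containment $I(W) \subseteq I(V)$ forcing $H_{I(W)} \le H_{I(V)}$) and applying Theorem \ref{8tool} to the $(n-s)$-dimensional set $W$, we get $H_{I(V),\R}(m) \gtrsim_n m^{n-s}\deg(W)$ for all $m \ge 2s\,\delta(W)$, and $\delta(W) \lesssim_n \delta_{s+1}(V)$. Setting $m \sim_n (k/\Delta_s(V))^{1/(n-s)}$, the lower bound becomes $\gtrsim_n k \cdot \deg(W)/\Delta_s(V) \gtrsim_n k$ as soon as $\deg(W) \gtrsim_n \Delta_s(V)$, which holds by the definition of $\Delta_s$; and the requirement $m \ge 2s\,\delta(W)$ is exactly what the lower endpoint $c_1 \delta_s(V)^{n-s}\Delta_s(V)$ of the interval $\mathcal{R}_s(V)$ (via $k$ lying in it) is designed to ensure, after choosing $c_1$ appropriately as in Lemma \ref{ref}.

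The main obstacle, then, is not the ham-sandwich reduction — which is essentially the Guth–Katz argument — but the bookkeeping that produces an auxiliary variety $W \supseteq V$ of dimension exactly $n-s$ whose degree is simultaneously (i) at least a constant times $\Delta_s(V)$, so the Hilbert-function lower bound clears the threshold $k$, and (ii) defined by polynomials of degree $\lesssim_n \delta_{s+1}(V)$, so that the hypothesis $k \le \tfrac{c_0}{2}\delta_{s+1}(V)^{n-s}\Delta_s(V)$ guarantees the chosen $m$ exceeds $2s\,\delta(W)$. I expect this to hinge on carefully chaining the partial-degree definitions $\delta_{s+1}(V),\ldots,\delta_{n-d}(V)$ together with repeated use of Bezout and Krull's Hauptidealsatz (as in Lemma \ref{dorder}), peeling off one component at a time from $\delta_{n-d}$ down to $\delta_{s+1}$, so that the accumulated degree is comparable to $\deg(V)/(\delta_{s+1}(V)\cdots\delta_{n-d}(V)) = \Delta_s(V)$ up to the relevant bounded powers. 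Once $W$ is in hand, the degree bound \eqref{Cnd} and the non-vanishing on $V$ both follow immediately from the construction above.
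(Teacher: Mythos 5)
Your overall strategy coincides with the paper's: embed $V(\R)$ into $\R^N$ via representatives of a basis of $\R[x]_{\le m}/I_{\R}(V)_{\le m}$, apply the ordinary ham-sandwich theorem, pull back, and reduce everything to the numerical task of finding $m$ in the right range with $H_{I(V)}(m) \ge k$. The ham-sandwich reduction, the use of Lemma~\ref{HI}, and the intent to compare $V$ with an $(n-s)$-dimensional variety $W \supseteq V$ and invoke Theorem~\ref{8tool} are all exactly the paper's ingredients.

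However, there is a genuine gap in the chain of inequalities at the heart of your argument. You write $H_{I(V)}(m) \ge H_{I(W)}(m)$, justified by $I(W) \subseteq I(V)$. But this containment forces the inequality the \emph{other} way: $I(W)_{\le m} \subseteq I(V)_{\le m}$ implies
\[
H_{I(V)}(m) = \dim\bigl(\C[x]_{\le m}/I(V)_{\le m}\bigr) \le \dim\bigl(\C[x]_{\le m}/I(W)_{\le m}\bigr) = H_{I(W)}(m),
\]
so $H_{I(W)}(m)$ is an \emph{upper} bound for $H_{I(V)}(m)$, not a lower bound. Applying Theorem~\ref{8tool} to $W$ therefore does not give you what you need. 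This is not a notational slip: it is precisely the point where the restriction $m < \delta_{s+1}(V)$ must be exploited. The paper's fix is to show that, among the $(n-s)$-dimensional irreducible components $V_1,\ldots,V_r$ of $Z(g_1,\ldots,g_t)$ containing $V$ (with the $g_i$ of degree $\le \delta_s(V)$), at least one, say $V_1$, satisfies the \emph{equality} $I(V)_{\le m} = I(V_1)_{\le m}$. This is proved by contradiction: if every $V_j$ admitted some $h_j \in I(V)_{\le m} \setminus I(V_j)_{\le m}$, then $Z(g_1,\ldots,g_t,h_1,\ldots,h_r)$ would contain $V$ with all containing components of dimension $\le n-s-1$, while all defining polynomials have degree $\le m < \delta_{s+1}(V)$, contradicting the definition of $\delta_{s+1}(V)$. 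With this equality in hand, $H_{I(V)}(m) = H_{I(V_1)}(m)$ and Theorem~\ref{8tool} applies to $V_1$; no inequality between the Hilbert functions of $V$ and $V_1$ in the wrong direction is ever needed.

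A secondary, more minor issue: you assert $\delta(W) \lesssim_n \delta_{s+1}(V)$, but what the argument requires (to guarantee the threshold $m \ge 2s\,\delta(W)$ lies below $\delta_{s+1}(V)$) is the sharper bound $\delta(W) \le \delta_s(V)$, which does hold because $W$ is by construction a component of $Z(g_1,\ldots,g_t)$ with all $g_i$ of degree $\le \delta_s(V)$. Likewise, $\deg(W) \ge \Delta_s(V)$ does not ``hold by the definition of $\Delta_s$''; it requires an argument (Bezout applied to successive cuts by polynomials of degrees $\delta_{s+1}(V),\ldots,\delta_{n-d}(V)$, ending at $V$ itself), which you gesture at but leave undone.
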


\begin{proof}
Since $k \in \mathcal{R}_s(V)$ and $s$ is admissible, we can find some positive integer 
\begin{equation}
\label{mrange}
2s \delta_s(V) \le m < \delta_{s+1}(V)
\end{equation}
 bounded above up to a constant by the expression (\ref{Cnd}) and satisfying the bound 
\begin{equation}
\label{lowerm}
c_0 m^{n-s}\Delta_s(V) > k,
\end{equation}
with $c_0$ as in Theorem \ref{8tool}. It will suffice to show that there exists some real polynomial $g \notin I(V)$ of degree at most $m$ bisecting every $S_i$.

Our first step will be to establish the following lemma, that we shall also need later. 

\begin{lema} 
\label{hk}
Let $V \subseteq \C^n$ be an irreducible variety of dimension $d$ and let $m$ be an integer satisfying (\ref{mrange}) and (\ref{lowerm}). Then $H_{I(V)}(m) > k$.
\end{lema}

\begin{proof}
To see this let $g_1,\ldots,g_t$ be as in the definition of $\delta_{s}(V)$. Let $V_1,\ldots,V_r$ be the nonempty set of all irreducible components of $Z(g_1,\ldots,g_t)$ of dimension $n-s$ containing $V$. We claim there is some $1 \le j \le r$ such that $I(V)_{\le m}=I(V_j)_{\le m}$. Clearly the inclusion $I(V_j)_{\le m} \subseteq I(V)_{\le m}$ always holds, so let us assume that for every $1 \le j \le r$ we can find some $h_j \in \C[x_1,\ldots,x_n]$ such that $h_j \in I(V)_{\le m} \setminus I(V_j)_{\le m}$. Then $Z(h_j) \cap V_j$ will be an algebraic set containing $V$ having all its irreducible components of dimension less than $n-s$. Hence $Z(g_1,\ldots,g_t,h_1,\ldots,h_r)$ will be an algebraic set containing $V$ such that its irreducible components that contain $V$ have dimension at most $n-s-1$. But all the polynomials $g_1,\ldots,g_t,h_1,\ldots,h_r$ have degree at most $m$. This implies that $\delta_{s+1}(V) \le m$, contradicting (\ref{mrange}). This proves our claim.

Let us then assume without loss of generality that $I(V)_{\le m}=I(V_1)_{\le m}$. Since $ m \ge 2s\delta_{s}(V) \ge 2(n-\dim(V_1)) \delta(V_1)$, we can apply Theorem \ref{8tool} to conclude that
\begin{equation}
\label{him}
 H_{I(V)}(m) = H_{I(V_1)}(m) \ge c_0 m^{n-s} \deg(V_1).
 \end{equation}
We now claim that
\begin{equation}
\label{degmax}
 \deg(V_1) \ge \Delta_{s}(V).
 \end{equation}
To see this, recall that $V_1$ is an $(n-s)$-dimensional irreducible variety containing $V$. By definition of $\delta_{s+1}(V)$ there must exist some polynomial $f_{s+1}$ of degree at most $\delta_{s+1}(V)$ vanishing on $V$ that cuts $V_1$ properly. In particular, there is some irreducible component $V_1^{(s+1)}$ of $Z(f_{s+1}) \cap V_1$ of dimension $n-s-1$ and degree at most $\delta_{s+1}(V) \deg(V_1)$ (by Lemma \ref{Bezout}) that contains $V$. Iterating this argument until we obtain an irreducible variety of dimension $d$ that contains $V$, and must therefore be equal to $V$, it follows that
$$ \deg(V) \le \deg(V_1) \delta_{s+1}(V) \cdots \delta_{n-d}(V).$$
This establishes (\ref{degmax}). Plugging this into (\ref{him}) and using (\ref{lowerm}), it follows that $H_{I(V)}(m) > k$, as desired.
\end{proof}

We now proceed to show that we can find a real polynomial $g \notin I(V)$ of degree at most $m$ bisecting every $S_i$. Let $1,p_1,\ldots, p_t$ be a basis of $\R[x_1,\ldots,x_n]_{\le m} / I_{\R}(V)_{\le m}$. Since we have established that $H_{I(V)}(m) > k$, it must be $t \ge k$ by Lemma \ref{HI}. To each $p_i$ we associate a representative $q_i \in \R[x_1,\ldots,x_n]_{\le m}$, that is to say, an element whose projection to $\R[x_1,\ldots,x_n]_{\le m} / I_{\R}(V)_{\le m}$ is equal to $p_i$. We consider the map $\phi: \R^n \rightarrow \R^t$ given by
$$ \phi(x)=\left( q_1(x), \ldots,q_t(x) \right).$$
If $x$ and $y$ are two different points inside of $V(\R)$, then we know there is some $1 \le i \le n$ such that the linear projection $\pi_i$ to the $i$th coordinate satisfies $\pi_i(x) \neq \pi_i(y)$. Since the elements of $I(V)$ vanish on both $x$ and $y$, and $1,p_1,\ldots,p_t$ is a basis for $\R[x_1,\ldots,x_n]_{\le m}/I(V)_{\le m}$, it follows that there is some linear combination of the $q_i$ that takes different values on $x$ and $y$. This implies that the map $\phi$ is injective on $V(\R)$. In particular, it is injective on each $S_i$. 

Consider now the sets $\phi(S_1),\ldots,\phi(S_k) \subseteq \R^t$. By Lemma \ref{HS} and the fact that $k \le t$, we know that there is some hyperplane in $\R^t$ bisecting each $\phi(S_i)$. This means that there are some coefficients $a_1,\ldots,a_{t+1} \in \R$, not all equal to zero, such that for every $S_i$ we have
$$ \left| \left\{ x \in S_i : a_1 q_1(x) + \ldots + a_t q_t(x) + a_{t+1} > 0 \right\} \right| \le |\phi(S_i)|/2 = |S_i|/2,$$
and
$$ \left| \left\{ x \in S_i : a_1 q_1(x) + \ldots + a_t q_t(x) + a_{t+1} < 0 \right\} \right| \le |\phi(S_i)|/2 = |S_i|/2.$$
Choosing $g = a_1 q_1 + \ldots + a_t q_t + a_{t+1}$, this concludes the proof of Theorem \ref{30lem}.
\end{proof}

Let us now turn to the proof of Theorem \ref{30par}. We will use the notation
$$ \Pi_i(V) = \delta_i(V)^{n-i} \Delta_i(V).$$

\begin{proof}[Proof of Theorem \ref{30par}] Let $S$ and $V$ be as in the statement of the theorem. By Theorem \ref{30lem} we can find a real polynomial $g_1 \notin I(V)$ of degree $O(1)$ bisecting $S$. Let us write $A_{1,1}$ for the points $x \in \R^n$ where $g_1(x)>0$ and $A_{1,2}$ for those with $g_1(x)<0$. Of course, $g_1$ vanishes on the remaining points. Clearly, each $A_{1,i}$ is the union of some open connected components of $\R^n \setminus Z(g_1)$. We write $S_{1,1}$ for those points of $S$ inside of $A_{1,1}$ and $S_{1,2}$ for those inside of $A_{1,2}$. We know both sets have size at most $|S|/2$. The points of $S$ that do not belong to any of these two sets must be contained inside of $Z(g_1)$.

We proceed recursively. Write
\begin{equation}
\label{r}
r=M^{n-i_V(M)}\Delta_{i_V(M)}(V),
\end{equation}
and suppose that given a positive integer $i \le \log_2 r$ we have constructed a real polynomial $g_{i-1} \notin I(V)$ and disjoint open sets $A_{i-1,1}, \ldots, A_{i-1,2^{i-1}}$, each of them being the union of some open connected components of $\R^n \setminus Z(g_{i-1})$. Suppose we have also guaranteed that, writing $S_{i-1,j}$ for those points of $S$ inside of $A_{i-1,j}$, then  $|S_{i-1,j}| \le |S|2^{-(i-1)}$ and that all points of $S$ outside of these sets lie inside of $Z(g_{i-1})$. We can now use Theorem \ref{30lem} to find a real polynomial $h_i \notin I(V)$ bisecting $S_{i-1,j}$ for every $1 \le j \le 2^{i-1}$. Notice that choosing $c_1 \gtrsim_n 1$ sufficiently small, we have by (\ref{basic1}) that the intervals
\begin{equation}
\label{rangoi}
 [c_1 \Pi_t(V), \frac{c_0}{2}\Pi_{t+1}(V)] \subseteq \mathcal{R}_t(V),
 \end{equation}
with $t$ admissible cover the positive integers, as it is easy to verify. If $t < i_V(M)$ is the smallest admissible integer with $2^{i-1}$ lying in an interval of the above form, we can use Theorem \ref{30lem} to bound the degree of $h_i$ by 
\begin{equation}
\label{hi}
\deg(h_i) \lesssim_n 2^{\frac{i-1}{n-t}} \Delta_{t}(V)^{-\frac{1}{n-t}}.
\end{equation}
Similarly, by definition of $i_V(M)$ and Theorem \ref{30lem}, we can take the remaining $h_i$ to satisfy
$$ \deg(h_i) \lesssim_n 2^{\frac{i-1}{n-i_V(M)}} \Delta_{i_V(M)}(V)^{-\frac{1}{n-i_V(M)}}.$$
 
Write $B_1$ for those points of $\R^n$ where $h_i$ is strictly positive and $B_2$ for those where it is strictly negative. These are open sets with boundary in $Z(h_i)$. Write $g_i=g_{i-1}h_i$ and notice in particular that $g_i \notin I(V)$ and $g_i$ is a real polynomial. For every $1 \le j \le 2^{i-1}$ define $A_{i,j}=A_{i-1,j} \cap B_1$ and $A_{i,2^{i-1}+j} = A_{i-1,j} \cap B_2$, so we are simply separating the elements of each $A_{i-1,j}$ according to the sign of $h_i$. The resulting sets are open sets which are the union of some open connected components of $\R^n \setminus Z(g_i)$. Since each $S_{i-1,j}$ is contained inside $A_{i-1,j}$ and by construction of $h_i$ has at most half its elements in $B_1$ and half of them in $B_2$, we conclude that writing $S_{i,j}$ for those elements of $S$ inside of $A_{i,j}$ we obtain a collection of $2^i$ sets, with each $S_{i,j}$ having at most $|S|2^{-i}$ elements of $S$. All elements of $S$ not lying inside of $S_{i,j}$ for any $j$ must lie inside of $Z(g_i)$. 

Repeating this process up to $i=\log_2 r$, we have found a real polynomial $g \notin I(V)$ and a partition of $\R^n \setminus Z(g) $ into sets $A_j$, $j=1,\ldots,r$, such that each $A_j$ is the union of some open connected components of $\R^n \setminus Z(g)$ and such that each $A_j$ contains at most $|S|/r$ elements of $S$. To finish the proof of Theorem \ref{30par} it thus only remains to show that $\deg(g) \lesssim_{n} M$. 

By our previous arguments we know that we can write $g=\prod_{i=1}^{\log_2 r} h_i$, where the polynomials $h_i$ have their degree bounded in the way described above. As a consequence, we have
\begin{equation*}
\begin{aligned}
 \sum_{i=1}^{1+\log_2 \frac{c_0}{2} \Pi_{i_V(M)}(V)} \deg(h_i) &\lesssim_{n} \sum_{t=0}^{i_V(M)-1} \sum_{i=\log_2 c_1 \Pi_{t}(V)}^{1+ \log_2 \frac{c_0}{2}  \Pi_{t+1}(V)} \Delta_t(V)^{-\frac{1}{n-t}}  2^{\frac{i-1}{n-t}} \\
 &\lesssim_{n} \sum_{t=0}^{i_V(M)-1}  \Delta_t(V)^{-\frac{1}{n-t}} \left( \delta_{t+1}(V)^{n-(t+1)} \Delta_{t+1}(V) \right)^{\frac{1}{n-t}} \\
 &\lesssim_{n} \sum_{t=0}^{i_V(M)-1}  \delta_{t+1}(V) \\
 &\lesssim_{n} M,
\end{aligned}
\end{equation*}
where we have used (\ref{basic1}) and (\ref{ivmrange}), and similarly
\begin{equation*}
\begin{aligned}
 \sum_{i=1+\log_2 \frac{c_0}{2} \Pi_{i_V(M)}(V)}^{\log_2 r} \deg(h_i) &\lesssim_{n} \Delta_{i_V(M)}(V)^{-\frac{1}{n-i_V(M)}}  \sum_{i=1+\log_2 \frac{c_0}{2}  \Pi_{i_V(M)}(V)}^{\log_2 r} 2^{\frac{i-1}{n-i_V(M)}} \\
 &\lesssim_{n}  \Delta_{i_V(M)}(V)^{-\frac{1}{n-i_V(M)}} r^{\frac{1}{n-i_V(M)}} \\
 &\lesssim_{n} M,
\end{aligned}
\end{equation*}
by (\ref{r}). This concludes the proof of Theorem \ref{30par}.
\end{proof}

\section{Siegel's Lemma for varieties}
\label{S4}

From Theorem \ref{30par} we can deduce the following estimate.

\begin{coro}
\label{case0}
Let $S$ be a finite set of points inside $V(\R)$ for some irreducible variety $V \subseteq \C^n$ of dimension $d$. Let $s$ be an admissible integer with $|S| \in \mathcal{R}_s(V)$. Then, there exists some polynomial $P$ of degree at most
$$ \lesssim_{n} \left( \frac{|S|}{\Delta_{s}(V)} \right)^{\frac{1}{n-s}},$$
vanishing on $S$ without vanishing identically on $V$.
\end{coro}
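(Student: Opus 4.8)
The plan is to deduce this directly from a Hilbert-function count, reusing the construction carried out inside the proof of Theorem \ref{30lem}. Since $|S| \in \mathcal{R}_s(V)$ and $s$ is admissible with respect to $V$, we may choose a positive integer $m$ exactly as there, with $k = |S|$: one obtains $2s\delta_s(V) \le m < \delta_{s+1}(V)$, the inequality $c_0 m^{n-s}\Delta_s(V) > |S|$, and $m \lesssim_n \left( |S|/\Delta_s(V) \right)^{\frac{1}{n-s}}$. The existence of such an $m$ is precisely what the admissibility of $s$ together with $|S| \in \mathcal{R}_s(V)$ guarantees, so no new compatibility check is needed here.

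With this $m$ fixed, apply Lemma \ref{hk} (again with $k=|S|$): since $m$ satisfies (\ref{mrange}) and (\ref{lowerm}), we get $H_{I(V)}(m) > |S|$, that is, $\dim_{\C}\left( \C[x_1,\ldots,x_n]_{\le m}/I(V)_{\le m} \right) > |S|$. Now conclude by linear algebra. For each $p \in S \subseteq V(\R)$, evaluation at $p$ descends to a well-defined linear functional on $\C[x_1,\ldots,x_n]_{\le m}/I(V)_{\le m}$, since every element of $I(V)$ vanishes at $p$. The common kernel of these $|S|$ functionals has codimension at most $|S|$ in a space of dimension strictly greater than $|S|$, hence is nonzero; picking any nonzero element and letting $P \in \C[x_1,\ldots,x_n]_{\le m}$ be a representative, we see that $\deg(P) \le m$, that $P$ vanishes at every point of $S$, and that $P \notin I(V)$, so $P$ does not vanish identically on $V$. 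The degree bound $\deg(P) \le m \lesssim_n \left( |S|/\Delta_s(V) \right)^{\frac{1}{n-s}}$ is exactly what is claimed.

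I do not expect a genuine obstacle: the substantive content is already isolated in Lemma \ref{hk} and in the choice of $m$ taken from the proof of Theorem \ref{30lem}, and what remains is the standard ``more unknowns than linear equations'' argument. The only point worth stating explicitly is the well-definedness of the evaluation functionals on the quotient by $I(V)_{\le m}$, which is immediate. (If one wanted $P$ to be real — not required in this statement — one would instead work with $H_{I(V),\R}$ and invoke Lemma \ref{HI} to keep the dimension bound.)
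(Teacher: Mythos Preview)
Your proof is correct and coincides with the direct dimension-counting argument the paper itself gives: immediately after stating this corollary as a consequence of Theorem~\ref{30par}, the paper remarks that a direct proof is available and carries it out (in the slightly more general setting of Lemma~\ref{complexcase}) exactly as you do --- choose $m$ as in the proof of Theorem~\ref{30lem}, invoke Lemma~\ref{hk} to get $H_{I(V)}(m) > |S|$, and finish by linear algebra. The only cosmetic difference is that the paper's one-line deduction from Theorem~\ref{30par} would proceed by taking $M$ large enough that every cell contains fewer than one point of $S$, forcing $S \subseteq Z(g)$; your route bypasses the partitioning machinery and goes straight to the Hilbert-function estimate, which is precisely the paper's preferred alternative.
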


We can use a dimension counting argument to give a direct proof of this result, which we now formulate in a slightly more general form. For every $0 \le s \le n-d$, we shall extend the definition of the intervals $\mathcal{R}_s(V)$ to intervals of the form
$$ \mathcal{R}_{s,\tau}^l(V) = [\tau \delta_s(V)^{n-(s+l)} \Delta_{s}(V), \tau \delta_{s+1}(V)^{n-(s+l)} \Delta_s (V) ],$$
for every choice of real numbers $\tau > 0$, integers $0 \le l < n-s$ and irreducible varieties $V \subseteq \C^n$. 

As in the proof of Lemma \ref{ref}, the following observation follows immediately from the definition of the $\Delta_i$ and the fact that given a positive integer $s$, if $t$ is the smallest admissible integer with $s \le t$, then $\delta_s(V) \gtrsim_n \delta_t(V)$.

\begin{lema}
\label{coverN}
Let $V \subseteq \C^n$ be an irreducible variety. For any integers $l<d$ and $0 < \varepsilon < 1$, we can find $\varepsilon \lesssim_n \tau_1,\ldots,\tau_{n-d} \le \varepsilon$ such that $\R_{\ge 0}$ is covered by the sets $\mathcal{R}_{s,\tau_s}^l(V)$ with $s$ admissible.
 \end{lema}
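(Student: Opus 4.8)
The plan is to re-run the interval–chaining argument from the proof of Lemma~\ref{ref}, now carrying the parameter $l$ through the exponents. List the admissible integers as $0=s_0<s_1<\cdots<s_k=n-d$; since $0$ and $n-d$ are always admissible we have $k\le n-d$. We will choose $\tau_{s_j}$ for $0\le j\le k$ and set $\tau_s:=\varepsilon$ at the (never used) non-admissible indices. Each $\mathcal{R}_{s,\tau_s}^{l}(V)$ is a genuine interval, because $\delta_s(V)\le\delta_{s+1}(V)$ and the exponent $n-s-l$ is positive (as $l<d\le n-s$). Moreover the leftmost interval $\mathcal{R}_{s_0,\tau_{s_0}}^{l}(V)$ has left endpoint $\tau_{s_0}\delta_0(V)^{\,n-l}\Delta_0(V)=0$ since $\delta_0(V)=0$, while the rightmost $\mathcal{R}_{s_k,\tau_{s_k}}^{l}(V)$ has right endpoint $\tau_{s_k}\delta_{n-d+1}(V)^{\,d-l}\Delta_{n-d}(V)=\infty$ since $\delta_{n-d+1}(V)=\infty$ and $d-l>0$. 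Hence covering $\R_{\ge 0}$ reduces to arranging that each consecutive pair $\mathcal{R}_{s_j,\tau_{s_j}}^{l}(V)$, $\mathcal{R}_{s_{j+1},\tau_{s_{j+1}}}^{l}(V)$ overlaps. (When $d=n$ the only admissible index is $0=n-d$ and $\mathcal{R}_{0,\varepsilon}^{l}(V)=[0,\infty)$ already, so assume $d<n$.)

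The next step is the overlap estimate for two consecutive admissible integers $s<t$. Every index strictly between them is non-admissible, so $\delta_{s+1}(V)\le\cdots\le\delta_t(V)\lesssim_n\delta_{s+1}(V)$; feeding this into (\ref{basic1}) iterated from $s$ to $t$ gives $\Delta_t(V)\le\bigl(\prod_{i=s+1}^{t}\delta_i(V)\bigr)\Delta_s(V)\le\delta_t(V)^{t-s}\Delta_s(V)\lesssim_n\delta_{s+1}(V)^{t-s}\Delta_s(V)$. Consequently the left endpoint of $\mathcal{R}_{t,\tau_t}^{l}(V)$ satisfies
\[
\tau_t\,\delta_t(V)^{\,n-t-l}\Delta_t(V)\ \lesssim_n\ \tau_t\,\delta_{s+1}(V)^{\,n-t-l}\,\delta_{s+1}(V)^{\,t-s}\Delta_s(V)\ =\ \tau_t\,\delta_{s+1}(V)^{\,n-s-l}\Delta_s(V),
\]
i.e. it is at most $C_n\,(\tau_t/\tau_s)$ times the right endpoint $\tau_s\,\delta_{s+1}(V)^{\,n-s-l}\Delta_s(V)$ of $\mathcal{R}_{s,\tau_s}^{l}(V)$, for some $C_n=O_n(1)$ which we may take $\ge 1$. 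Thus the two intervals overlap as soon as $\tau_t\le\tau_s/C_n$.

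It then suffices to set $\tau_{s_j}:=\varepsilon/C_n^{j}$ for $0\le j\le k$ (so $\tau_{s_0}=\varepsilon$), and $\tau_s:=\varepsilon$ for the non-admissible $s$. The overlap condition $\tau_{s_{j+1}}=\tau_{s_j}/C_n$ then holds by construction, so together with the first paragraph $\bigcup_{j=0}^{k}\mathcal{R}_{s_j,\tau_{s_j}}^{l}(V)=\R_{\ge 0}$. Moreover $\varepsilon/C_n^{n-d}\le\tau_{s_j}\le\varepsilon$ for all $j$, and since $C_n^{n-d}=O_n(1)$ this gives $\varepsilon\lesssim_n\tau_{s_j}\le\varepsilon$, as required. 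The only point that demands any care — and the reason the lower implied constant is allowed to depend on $n$ — is that $C_n$ gets raised to the power $k\le n-d$; but since this is still $O_n(1)$ there is no genuine obstacle, the whole argument being essentially the proof of Lemma~\ref{ref} with the parameter $l$ carried along in the exponents.
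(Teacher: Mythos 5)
Your argument is correct and is exactly the interval-chaining argument the paper gestures at when it says this ``follows'' as in Lemma~\ref{ref}: you list the admissible indices, verify that the extreme intervals reach $0$ and $\infty$, and use iterated $\Delta_{i+1}(V)\le\delta_{i+1}(V)\Delta_i(V)$ together with $\delta_t(V)\lesssim_n\delta_{s+1}(V)$ across a run of non-admissible indices to get consecutive overlap. The one refinement you correctly supply is the geometrically decreasing choice $\tau_{s_j}=\varepsilon/C_n^{\,j}$: unlike the intervals $\mathcal{R}_i(V)$, which carry two distinct constants $c_1$ and $c_0/2$ at their endpoints and so can be chained with a single small $c_1$, here $\mathcal{R}^l_{s,\tau_s}(V)$ has the same $\tau_s$ at both ends, so overlap genuinely forces $\tau_t\le\tau_s/C_n$ from one admissible index to the next; your bound $\tau_{s_j}\ge\varepsilon/C_n^{n-d}\gtrsim_n\varepsilon$ then closes the argument.
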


We have the following variant of Corollary \ref{case0} that does not require the set of points $S$ to be real or to lie inside of $V$.

\begin{lema}
\label{complexcase}
Let $S$ be a finite subset of $\C^n$ and let $V$ be a $d$-dimensional irreducible variety $V \subseteq \C^n$. Let $\tau >0$ be sufficiently small with respect to $n$ and let $s$ be an admissible integer with $|S| \in \mathcal{R}_{s,\tau}^0(V)$. Then, there exists some polynomial $P$ of degree at most
\begin{equation}
\label{Cnd2}
\lesssim_{n,\tau} \left( \frac{|S|}{\Delta_{s}(V)} \right)^{\frac{1}{n-s}},
\end{equation}
vanishing on $S$ without vanishing identically on $V$.
\end{lema}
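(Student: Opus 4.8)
The idea is to set up a dimension count: we want a polynomial $P \in \C[x_1,\ldots,x_n]_{\le m}$ vanishing at every point of $S$, but not identically on $V$. The vanishing conditions at the points of $S$ are $|S|$ linear constraints on the coefficient vector of $P$. The obstruction "$P$ vanishes identically on $V$" corresponds exactly to the subspace $I(V)_{\le m}$. Hence it suffices to find $m$ bounded above by the expression in (\ref{Cnd2}) for which
$$
H_{I(V)}(m) = \dim_\C\big(\C[x_1,\ldots,x_n]_{\le m}/I(V)_{\le m}\big) > |S|.
$$
Indeed, the linear map sending a class $[P] \in \C[x_1,\ldots,x_n]_{\le m}/I(V)_{\le m}$ to the evaluation vector $(P(s))_{s \in S} \in \C^{|S|}$ then has nontrivial kernel, and any nonzero class in the kernel lifts to a polynomial of degree at most $m$ that vanishes on $S$ but not identically on $V$.

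So the whole proof reduces to producing such an $m$, and this is precisely what Lemma \ref{hk} (the key lemma extracted in the proof of Theorem \ref{30lem}) gives us, provided we can find an integer $m$ satisfying (\ref{mrange}), i.e. $2s\delta_s(V) \le m < \delta_{s+1}(V)$, together with (\ref{lowerm}), i.e. $c_0 m^{n-s}\Delta_s(V) > |S|$, and with $m$ bounded above up to a constant by $(|S|/\Delta_s(V))^{1/(n-s)}$. The hypothesis that $|S| \in \mathcal{R}_{s,\tau}^0(V) = [\tau\,\delta_s(V)^{n-s}\Delta_s(V),\, \tau\,\delta_{s+1}(V)^{n-s}\Delta_s(V)]$ with $\tau$ small with respect to $n$, combined with $s$ being admissible (so $\delta_{s+1}(V) > 2s\,\delta_s(V)$), is exactly what makes this possible: the lower endpoint of the range for $|S|$ forces $(|S|/\Delta_s(V))^{1/(n-s)} \gtrsim \tau^{1/(n-s)}\delta_s(V)$, which beats $2s\delta_s(V)$ once we rescale by a constant depending on $n$ (here admissibility and the smallness of $\tau$ are used), while the upper endpoint forces $(|S|/(c_0^{-1}\Delta_s(V)))^{1/(n-s)} < \delta_{s+1}(V)$ after choosing $\tau$ small enough in terms of $c_0$ and $n$; so one may pick $m$ to be, say, $\lceil (2|S|/(c_0\Delta_s(V)))^{1/(n-s)}\rceil$ up to the rescaling, and check it lands in $[2s\delta_s(V), \delta_{s+1}(V))$ and satisfies the strict inequality (\ref{lowerm}). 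This is the same bookkeeping already carried out at the start of the proof of Theorem \ref{30lem}, only with the interval $\mathcal{R}_s(V)$ replaced by $\mathcal{R}_{s,\tau}^0(V)$, which has the same shape.

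The main obstacle, such as it is, is purely the interval arithmetic: verifying that the smallness of $\tau$ relative to $n$ (and to the absolute constant $c_0$ from Theorem \ref{8tool}) genuinely guarantees the simultaneous solvability of (\ref{mrange}) and (\ref{lowerm}) by an integer of the claimed size, and in particular that $2s\delta_s(V) \le \delta_{s+1}(V)$ leaves enough room — this is where admissibility of $s$ enters decisively. Once $m$ is fixed and Lemma \ref{hk} yields $H_{I(V)}(m) > |S|$, the dimension count above concludes the proof; no use of Lemma \ref{HI} or of the reality of $S$ is needed here since we are working over $\C$ throughout.
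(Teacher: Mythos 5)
Your proposal is correct and follows the paper's own proof essentially verbatim: choose an integer $m$ satisfying (\ref{mrange}) and (\ref{lowerm}) and bounded by $(|S|/\Delta_s(V))^{1/(n-s)}$ up to an $O_{n,\tau}(1)$ factor (possible because $|S| \in \mathcal{R}_{s,\tau}^0(V)$, $s$ is admissible, and $\tau$ is small), invoke Lemma \ref{hk} to get $H_{I(V)}(m) > |S|$, and conclude by the linear-algebra dimension count over $\C$. Your remark that neither Lemma \ref{HI} nor the reality of $S$ is needed here, in contrast to Theorem \ref{30lem}, is exactly the point of stating this as a separate lemma.
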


\begin{proof}
Since $|S| \in \mathcal{R}_{s,\tau}^0(V)$ and $s$ is admissible, as long as $\tau$ is chosen sufficiently small with respect to $n$, we can find some positive integer $2s \delta_s(V) \le m < \delta_{s+1}(V)$ bounded above by the expression (\ref{Cnd2}) and satisfying the bound 
\begin{equation}
\label{lowerm2}
c_0 m^{n-s}\Delta_s(V) > |S|,
\end{equation}
with $c_0 \gtrsim_n 1$ as in Theorem \ref{8tool}. By Lemma \ref{hk}, we know in particular that $H_I(m) > |S|$. This means that there exists a basis $p_1,\ldots,p_t$ of $\C[x_1,\ldots,x_n]_{\le m} \setminus I(V)_{\le m}$ with $t > |S|$. If for each $p_i$ we let $q_i$ be an element of $\C[x_1,\ldots,x_n]_{\le m}$ that projects to $p_i$, the fact that $t > |S|$ implies that there is some nonzero linear combination over $\C$ of $q_1,\ldots,q_t$ that vanishes on every element of $S$. Since by definition of the $q_i$ this linear combination does not vanish on $V$, the result follows.
\end{proof}

Similarly, we have the following variant that does not require $V$ to be irreducible. Of course, in this case, the assertion that the polynomial $P$ we construct does not vanish identically on $V$ does not prevent the possibility that the zero set of $P$ still contains many of the irreducible components of $V$. It only guarantees that it does not contain all of them.

\begin{lema}
\label{reduciblecase}
Let $V \subseteq \C$ be a $d$-dimensional algebraic set and $\tau > 0$ some real number. Let $S$ be a finite subset of $\C^n$ with $|S| \ge \tau \delta(V)^d \deg(V)$. Then, there exists some polynomial $P$ of degree at most
\begin{equation}
\label{Cnd3}
\lesssim_{n,\tau} \left( \frac{|S|}{\deg(V)} \right)^{1/d},
\end{equation}
vanishing on $S$ without vanishing identically on $V$.
\end{lema}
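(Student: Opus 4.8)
The plan is to run the same dimension-counting argument used in the proof of Lemma~\ref{complexcase}, but invoking the version of Theorem~\ref{8tool} valid for an arbitrary algebraic set of pure dimension $d$ (which allows $V$ to be reducible) and with $\delta(V)$ now the partial degree of the algebraic set $V$. The only genuine point is to choose the degree parameter $m$ so that three requirements hold at once: it is large enough to apply Theorem~\ref{8tool}, i.e. $m \ge 2(n-d)\delta(V)$; it makes the Hilbert function exceed $|S|$, i.e. $c_0 m^d \deg(V) > |S|$; and it is small enough to match the claimed bound, i.e. $m \lesssim_{n,\tau} (|S|/\deg(V))^{1/d}$.

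First I would set
$$ m = \max\left( 2(n-d)\delta(V),\ \left\lceil \left( \frac{2|S|}{c_0 \deg(V)} \right)^{1/d} \right\rceil \right),$$
with $c_0 \gtrsim_n 1$ as in Theorem~\ref{8tool}. By construction $m \ge 2(n-d)\delta(V)$ and $c_0 m^d \deg(V) \ge 2|S| > |S|$. For the upper bound on $m$, the second term in the maximum is $\lesssim_n (|S|/\deg(V))^{1/d}$ directly, while the hypothesis $|S| \ge \tau \delta(V)^d \deg(V)$ gives $\delta(V) \le \tau^{-1/d}(|S|/\deg(V))^{1/d}$, so the first term is $\lesssim_{n,\tau}(|S|/\deg(V))^{1/d}$ as well. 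Hence $m \lesssim_{n,\tau}(|S|/\deg(V))^{1/d}$, which will be the bound on $\deg(P)$.

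Then, applying Theorem~\ref{8tool} to the $d$-dimensional algebraic set $V$ at this $m$ gives $H_{I(V)}(m) \ge c_0 m^d \deg(V) > |S|$. Let $p_1,\ldots,p_t$ be a basis of $\C[x_1,\ldots,x_n]_{\le m}/I(V)_{\le m}$, so $t = H_{I(V)}(m) > |S|$, and pick for each $i$ a representative $q_i \in \C[x_1,\ldots,x_n]_{\le m}$. The linear map $\C^t \to \C^{|S|}$ sending $(a_1,\ldots,a_t)$ to $\big(\sum_{i=1}^t a_i q_i(x)\big)_{x \in S}$ has nontrivial kernel since $t > |S|$; any nonzero element $(a_i)$ of the kernel produces $P = \sum_{i=1}^t a_i q_i$, a polynomial of degree at most $m$ vanishing on all of $S$. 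Because $(a_i) \ne 0$ and $p_1,\ldots,p_t$ are linearly independent modulo $I(V)$, we have $P \notin I(V)$, i.e. $P$ does not vanish identically on $V$ (equivalently, $Z(P)$ does not contain every irreducible component of $V$). Together with $\deg(P) \le m \lesssim_{n,\tau}(|S|/\deg(V))^{1/d}$, this finishes the proof.

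I do not expect a real obstacle here: the argument is a direct transcription of Lemma~\ref{complexcase} with the reducible form of Theorem~\ref{8tool}. If anything is delicate, it is the verification of the upper bound on $m$, since the floor $2(n-d)\delta(V)$ imposed by the hypotheses of Theorem~\ref{8tool} could in principle overshoot the target bound; this is exactly the role of the assumption $|S| \ge \tau \delta(V)^d \deg(V)$, and it is where the dependence of the implied constant on $\tau$ enters.
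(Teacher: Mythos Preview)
Your proposal is correct and follows essentially the same approach as the paper's own proof: the paper also uses the hypothesis $|S| \ge \tau \delta(V)^d \deg(V)$ to deduce $(|S|/\deg(V))^{1/d} \gtrsim_{\tau} \delta(V)$, then applies Theorem~\ref{8tool} for the algebraic set $V$ to find $m$ with $H_{I(V)}(m) > |S|$, and finally invokes the linear-algebra argument from Lemma~\ref{complexcase}. Your version simply makes the choice of $m$ and the dimension-counting step explicit where the paper is terse.
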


\begin{proof}
We begin by noticing that by our assumptions, we have
$$ \left( \frac{|S|}{\deg(V)} \right)^{1/d} \gtrsim_{\tau} \delta(V).$$
We see from this that by Theorem \ref{8tool}, we can find some integer $m$ bounded above by the expression (\ref{Cnd3}) and satisfying $H_{I(V)}(m) > |S|$. The proof then follows exactly as in Lemma \ref{complexcase}.
\end{proof}

We will need the following simple observation, that can be seen by considering a generic hyperplane intersecting $t$.

\begin{lema}
\label{Hconstruction}
Let $t \subseteq \C^n$ be an $l$-dimensional irreducible variety and let $H$ be a finite family of irreducible varieties of $\C^n$ of dimension $l-1$. Then there exists some irreducible variety $h \subseteq t$ of dimension $l-1$ with $\deg(h) \le \deg(t)$ and $h \notin H$.
\end{lema}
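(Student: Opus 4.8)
The plan is to produce $h$ by intersecting $t$ with a generic hyperplane and then extracting an appropriate irreducible component. First I would pick a hyperplane $H_0 \subseteq \C^n$ so that $H_0$ meets $t$ properly; since $t$ has dimension $l \geq 1$, a generic hyperplane does not contain $t$, so $H_0 \cap t$ is a nonempty algebraic set all of whose irreducible components have dimension $l-1$. By Bezout's inequality (Lemma \ref{Bezout}, applied with $W = t$, the single polynomial being a linear form), the sum of the degrees of the irreducible components of $H_0 \cap t$ is at most $\deg(t) \cdot 1 = \deg(t)$; in particular every such component $h$ satisfies $\deg(h) \leq \deg(t)$ and $h \subseteq t$.

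Next I would arrange that $h \notin H$. The point is that we have freedom in choosing $H_0$: as $H_0$ varies over the (irreducible, positive-dimensional) family of hyperplanes through a fixed generic point configuration, the components of $H_0 \cap t$ sweep out a dense subset of $t$, whereas the finitely many varieties in $H$ of dimension $l-1$ are proper closed subsets. Concretely, fix any point $x \in t$ not lying on any member of $H$ (such $x$ exists because $\bigcup_{h' \in H} h'$ is a proper closed subset of the irreducible variety $t$, being a finite union of varieties each of dimension $l-1 < l = \dim t$). Choose $H_0$ to be a generic hyperplane passing through $x$; then $H_0 \cap t$ is again purely $(l-1)$-dimensional, and the component $h$ of $H_0 \cap t$ containing $x$ cannot equal any $h' \in H$, since $x \notin h'$. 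This $h$ has all the required properties: $h \subseteq t$, $\dim(h) = l-1$, $\deg(h) \leq \deg(t)$, and $h \notin H$.

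The only point requiring a little care is to confirm that a generic hyperplane through a fixed point $x \in t$ still cuts $t$ properly, i.e. does not contain $t$. But the hyperplanes through $x$ form a linear system of dimension $n-1$, and $t$ has dimension $l \leq n-1$; since not every hyperplane through $x$ contains $t$ (e.g. the tangent-direction count, or simply: $t$ is not contained in the intersection of all hyperplanes through $x$, which is just $\{x\}$ when $l \geq 1$), a generic member of this system meets $t$ in dimension exactly $l-1$. Hence the construction goes through, and I do not expect any serious obstacle; the main thing to be careful about is simply ensuring the genericity statements are applied to the correct (still positive-dimensional) linear system of hyperplanes.
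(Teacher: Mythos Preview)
Your proposal is correct and takes essentially the same approach as the paper, which simply remarks that the lemma ``can be seen by considering a generic hyperplane intersecting $t$'' without further detail. You have supplied exactly the details the paper omits: the Bezout bound on the degree of a hyperplane section, and the avoidance of $H$ by forcing the hyperplane through a point of $t$ lying outside the finite union $\bigcup_{h'\in H} h'$.
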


Recall that given a set $T$ of varieties of the same dimension, we write 
$$ \deg(T) = \sum_{t \in T} \deg(t).$$
We want to establish the following generalisation of Siegel's lemma to varieties of arbitrary dimension.

\begin{teo}
\label{GSiegel}
Let $0 \le l < d \le n$ be integers and $\tau_l >0$ a sufficiently small constant with respect to $n$. Let $T$ be a finite set of $l$-dimensional irreducible varieties in $\C^n$ and $V$ a $d$-dimensional irreducible variety in $\C^n$. Let $0 \le s \le n-d$ be an admissible integer with $\deg(T) \in \mathcal{R}_{s,\tau_l}^l(V)$. Then, there exists some polynomial $P \in \C[x_1,\ldots,x_n]$ of degree at most
\begin{equation}
\label{Rbound}
 \lesssim_{n,\tau_l} \left( \frac{\deg(T)}{\Delta_{s}(V)} \right)^{\frac{1}{n-(s+l)}},
 \end{equation}
vanishing at all elements of $T$ without vanishing identically on $V$.
\end{teo}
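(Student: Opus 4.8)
The plan is to reduce Theorem \ref{GSiegel} to the point-counting version, Lemma \ref{complexcase}, by replacing each $l$-dimensional variety $t \in T$ with a finite set of points that ``controls'' $t$ in the relevant degree regime. Concretely, I would first use Lemma \ref{Hconstruction} iteratively: starting from an irreducible $t$ of dimension $l$, pick an irreducible subvariety of dimension $l-1$ of degree at most $\deg(t)$, then one of dimension $l-2$, and so on, until I reach a finite set of points on $t$. Doing this carefully, with the right number of points per variety, I should be able to associate to the whole family $T$ a finite set $S \subseteq \C^n$ with $|S| \sim_{n} \deg(T) \cdot R^{l}$, where $R$ denotes the degree bound \eqref{Rbound} we are aiming for — the extra factor $R^l$ accounts for the fact that a polynomial of degree $R$ vanishing on roughly $R^l$ generic points of an $l$-dimensional variety $t$ is forced to vanish on all of $t$ (by Bezout, Lemma \ref{Bezout}, applied inside $t$ together with a Hilbert-function count on $t$ itself).

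Next I would check that this $S$ falls into the correct interval to apply Lemma \ref{complexcase}. Since $|S| \sim_n \deg(T) R^l$ and $R \sim_n (\deg(T)/\Delta_s(V))^{1/(n-(s+l))}$, a direct computation using the definition of $\mathcal{R}_{s,\tau_l}^l(V)$ versus $\mathcal{R}_{s,\tau}^0(V)$ shows that $\deg(T) \in \mathcal{R}_{s,\tau_l}^l(V)$ translates into $|S| \in \mathcal{R}_{s,\tau}^0(V)$ for an appropriate $\tau \sim_{n,\tau_l} 1$; the exponent shift from $n-s$ to $n-(s+l)$ in \eqref{Rbound} is precisely what is needed to make the two interval conditions compatible after multiplying through by $R^l$. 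Applying Lemma \ref{complexcase} to $S$ then produces a polynomial $P \notin I(V)$ of degree $\lesssim_{n,\tau}(|S|/\Delta_s(V))^{1/(n-s)}$ vanishing on $S$; one checks this is $\lesssim_{n,\tau_l} R$, which is exactly \eqref{Rbound}.

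It remains to argue that vanishing on $S$ forces $P$ to vanish on all of $T$. This is where the bookkeeping in the first step has to be done with care: for each $t \in T$ I want the chain of subvarieties and the final point set $S \cap t$ chosen so that any polynomial of degree at most $\deg(P) \lesssim_n R$ vanishing on $S \cap t$ must vanish identically on $t$. The mechanism is a downward induction on the dimension: if $P$ vanishes on enough points of an irreducible curve inside $t$ — more than $\deg(P)\deg(\text{curve})$ of them — then $P$ vanishes on that curve by Bezout; choosing the curves (and at each stage the lower-dimensional subvarieties) to be sufficiently ``spread out'' inside $t$, using Lemma \ref{Hconstruction} to avoid the bad ones, propagates vanishing up from points to $t$. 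I expect the main obstacle to be exactly this: organizing the recursive choice of auxiliary subvarieties and sample points so that the total count $|S|$ stays $\sim_n \deg(T) R^l$ (no worse), each individual $t$ ends up with enough points relative to $\deg(P)$ to trigger the Bezout propagation, and the avoidance conditions from Lemma \ref{Hconstruction} can all be met simultaneously across the finitely many varieties in $T$. Once that combinatorial/geometric construction is set up cleanly, the rest is the routine interval arithmetic and a single invocation of Lemma \ref{complexcase}.
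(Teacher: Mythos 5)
Your proposal is correct and rests on the same three tools as the paper --- Lemma \ref{Hconstruction} to spread out subvarieties inside each $t$, a Hilbert-function count to produce $P$, and Bezout to propagate vanishing --- but it organizes them differently, and the difference is worth recording. The paper inducts on $l$: it replaces $T$ by a set $\mathcal{H}$ of $(l-1)$-dimensional subvarieties with $\deg(\mathcal{H}) \sim R\deg(T)$ (one application of Lemma \ref{Hconstruction}), applies the inductive hypothesis to get $P$, and then a single Bezout comparison shows that $P$ vanishing on $\mathcal{H}\cap t$ forces $P\in I(t)$. You propose to unroll this recursion into one pass: build all $l$ layers at once, arriving at a point set $S$ with $|S| \sim R^l\deg(T)$, invoke Lemma \ref{complexcase} once, and then propagate vanishing upward through the layers. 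The interval arithmetic you flagged does check out: with $R = B(\deg(T)/\Delta_s(V))^{1/(n-s-l)}$, the quantity $|S|\sim R^l\deg(T)$ lands in $\mathcal{R}_{s,\tau}^0(V)$ with $\tau\sim B^l\tau_l^{(n-s)/(n-s-l)}$, and the choice $B\sim_n \tau_l^{-1/(n-s-l)}$ simultaneously makes $\tau\sim_n\tau_l$ (so $\tau$ is small whenever $\tau_l$ is) and forces $\deg(P) \le R \lesssim_{n,\tau_l}(\deg(T)/\Delta_s(V))^{1/(n-s-l)}$, which is exactly \eqref{Rbound}. So the approach is viable and amounts to the same proof with the induction unpacked.

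Two points to tighten. First, the ``chain'' picture in your opening paragraph --- one $(l-1)$-dimensional subvariety, then one $(l-2)$-dimensional one, and so on --- describes a single descending chain and would only yield on the order of $\deg(t)$ points; to reach the count $R^l\deg(t)$ you need the branching construction you gesture at later, with roughly $R$ new subvarieties chosen at each level, all distinct across the whole family by repeated use of Lemma \ref{Hconstruction}. Make that branching explicit, since it is where the factor $R^l$ actually comes from. Second, you float two mechanisms for the upward propagation: the layer-by-layer Bezout comparison (exactly what the paper uses) and the statement that a polynomial of degree $\le R$ vanishing on $\gtrsim_n H_{I(t)}(R)$ suitably generic points of $t$ must lie in $I(t)$. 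The second mechanism is fine but silently requires the a priori upper bound $H_{I(t)}(m)\lesssim_n m^l\deg(t)$ for all $m\ge 1$, which is true but is not among the stated tools; Theorem \ref{8tool} goes the other direction. The Bezout route is the one that stays entirely inside the paper's toolkit, and the paper's induction is just a notational device to make each step a single Bezout comparison plus a single interval shift $\mathcal{R}_{s,\tau_l}^{l}\to\mathcal{R}_{s,\tau_{l-1}}^{l-1}$, rather than tracking all $l$ shifts at once as your version must.
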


\begin{proof}
We proceed by induction on $l$, the case $l=0$ being handled by Lemma \ref{complexcase}. Let $R \ge 1$ be a parameter to be specified below. Given some $t \in T$ and some integer $r \ge 1$ to be specified soon, we apply Lemma \ref{Hconstruction} to find distinct irreducible subvarieties $h_1,\ldots,h_{r}$ of $t$ of dimension $l-1$ with $\deg(h_i) \le \deg(t)$ for every $i$. Because of this last bound, we may choose $r$ so that
$$ 2R \deg(t) \le \sum_{i=1}^r \deg(h_i) \le (2R+1) \deg(t).$$
We repeat this process for every element $t \in T$, applying Lemma \ref{Hconstruction} so that there is no overlap in the varieties $h_i$ obtained from different $t$, leading to a collection $\mathcal{H}$ of subvarieties with
$$ 2R \deg(T) \le \deg(\mathcal{H}) \le (2R+1) \deg(T).$$
Let
$$ R = B  \left( \frac{\deg(T)}{\Delta_{s}(V)} \right)^{\frac{1}{n-(s+l)}},$$
for some large $B \sim_{n,\tau_l} 1$ to be specified, so that
\begin{equation}
\label{degH}
 \deg(\mathcal{H}) = C_1 B \frac{\deg(T)^{1+\frac{1}{n-(s+l)}}}{\Delta_{s}(V)^{\frac{1}{n-(s+l)}}},
 \end{equation}
with $C_1 \sim 1$. Since $\deg(T) \in \mathcal{R}_{s,\tau_l}^l(V)$, it follows that
$$ \tau_l^{\frac{1}{n-(s+l)}} \delta_{s}(V) \le  \left( \frac{\deg(T)}{\Delta_{s}(V)} \right)^{\frac{1}{n-(s+l)}} \le \tau_l^{\frac{1}{n-(s+l)}} \delta_{s+1}(V),$$
and so in particular
\begin{equation}
\begin{aligned}
\label{newrange}
 (\tau_l^{1+\frac{1}{n-(s+l)}} C_1 B) \delta_{s}(V)^{n-(s+l-1)} \Delta_s(V) &\le \deg(\mathcal{H}) \\
 &\le (\tau_l^{1+\frac{1}{n-(s+l)}} C_1 B) \delta_{s+1}(V)^{n-(s+l-1)} \Delta_s(V).
 \end{aligned}
 \end{equation}
 Let now $\tau_{l-1} \sim_n 1$ be a sufficiently small fixed constant. We let $B_0 \sim_n 1$ be a sufficiently large constant with respect to $\tau_{l-1}$ and $n$. We then require that $\tau_l$ is sufficiently small as to satisfy 
 $$\tau_l^{1+\frac{1}{n-(s+l)}} C_1 B_0 \le \tau_{l-1}.$$
Finally, we choose $B \ge B_0$ such that 
 $$ \tau_l^{1+\frac{1}{n-(s+l)}} C_1 B = \tau_{l-1}.$$
 Notice that $B \lesssim_{n,\tau_l} 1$.
 
  We thus see from (\ref{newrange}) that $\deg(\mathcal{H}) \in R_{s,\tau_{l-1}}^{l-1}(V)$.  Since the components of $\mathcal{H}$ have dimension $l-1$, we are in a position to apply the induction hypothesis, provided $\tau_{l-1}$ was chosen sufficiently small. This gives us a polynomial $P$ of degree at most
 $$ \lesssim_{n,\tau_{l-1}} \left( \frac{\deg(\mathcal{H})}{\Delta_{s}(V)} \right)^{\frac{1}{n-(s+l-1)}},$$
 vanishing at all elements of $\mathcal{H}$, without vanishing identically on $V$. By (\ref{degH}) it follows that
 \begin{equation}
\label{quant}
  \deg(P) \lesssim_{n,\tau_{l-1}} B^{\frac{1}{n-(s+l-1)}} \left( \frac{\deg(T)}{\Delta_{s}(V)} \right)^{\frac{1}{n-(s+l)}}.
  \end{equation}
Since $B \ge B_0$ and $B_0$ can be taken to be a sufficiently large $O_n(1)$ quantity and since $n-(s+l-1) \ge 2$ and $\tau_{l-1} \lesssim_n 1$, we can take (\ref{quant}) to be strictly smaller than $R$. In other words, the polynomial $P$ can be taken to have degree less than $R$. If $P$ were to cut any element $t$ of $T$ properly then $Z(P) \cap t$ would have degree at most $\deg(t)R$. But this intersection would contain the corresponding components of $\mathcal{H} \cap t$, which were chosen to have degree at least $2R\deg(t)$, leading to a contradiction. It follows that $P$ must vanish at all elements of $T$ and this concludes the proof of Theorem \ref{GSiegel}.
\end{proof}

\begin{coro}
Let the hypothesis and notation be as in Theorem \ref{GSiegel}. If each $t \in T$ is the complexification of a real irreducible variety, then the polynomial $P$ can be taken to have real coefficients.
\end{coro}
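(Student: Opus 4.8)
The plan is to derive this corollary directly from Theorem \ref{GSiegel}, by symmetrising the polynomial it produces under complex conjugation, rather than re-running the induction. First I would apply Theorem \ref{GSiegel} to obtain a polynomial $P \in \C[x_1,\ldots,x_n]$ of degree at most $\lesssim_{n,\tau_l}(\deg(T)/\Delta_s(V))^{1/(n-(s+l))}$ that vanishes at all elements of $T$ without vanishing identically on $V$. The goal is then to replace $P$ by a real polynomial with the same properties.

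The key observation is that, since each $t \in T$ is the complexification of a real irreducible variety, its ideal is generated by real polynomials and hence $t$ is invariant under complex conjugation; in particular $\bar t = t$ for every $t \in T$. Writing $\bar P$ for the polynomial obtained by conjugating the coefficients of $P$, one has $\bar P(\bar x) = \overline{P(x)}$ for all $x \in \C^n$, so $\bar P$ vanishes on $\bar t = t$ for every $t \in T$; that is, $\bar P$ also vanishes at all elements of $T$. Consequently the real polynomials $P_1 = \tfrac{1}{2}(P + \bar P)$ and $P_2 = \tfrac{1}{2{\bf i}}(P - \bar P)$ both vanish at all elements of $T$ and both have degree at most $\deg(P)$. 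Since $P = P_1 + {\bf i} P_2$ does not vanish identically on $V$, at least one of $P_1$, $P_2$ does not vanish identically on $V$, and this polynomial has real coefficients, vanishes at all elements of $T$, and satisfies the same degree bound as $P$. This is exactly what the corollary asserts.

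There is no real obstacle here beyond Theorem \ref{GSiegel} itself: one only has to check that conjugation preserves the degree bound, which is clear since $\deg(\bar P) = \deg(P)$, and that the real polynomial extracted still avoids $V$, which is immediate from the decomposition $P = P_1 + {\bf i}P_2$. I would remark that the same argument in fact proves the slightly stronger statement in which $T$ is only assumed to be closed under complex conjugation as a set (so that conjugate pairs of non-real irreducible components are allowed), and that an alternative, more hands-on proof is available by re-running the proofs of Lemma \ref{complexcase} and Theorem \ref{GSiegel} with the real Hilbert function $H_{I(V),\R}$ in place of $H_{I(V)}$ --- which is legitimate by Lemma \ref{HI} --- together with a count of real versus complex linear conditions at a conjugation-invariant set of points in the base case; the conjugation-averaging argument above is the shorter route.
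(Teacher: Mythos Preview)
Your argument is correct and is essentially the paper's own proof: both write $P = P_1 + {\bf i}P_2$ with $P_1,P_2$ real and select whichever of $P_1,P_2$ is not in $I(V)$. The only cosmetic difference is that the paper justifies $P_1,P_2 \in I(t)$ by observing they vanish on $t(\R)$ and hence on its complexification $t$, whereas you reach the same conclusion via the conjugation-invariance $\bar t = t$; these are two phrasings of the same fact.
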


\begin{proof}
Let $P=P_1 + i P_2$ be the polynomial provided by Theorem \ref{GSiegel}, with $P_1,P_2$ having real coefficients. Then both $P_1$ and $P_2$ have to vanish at every real point of every $t \in T$. But clearly both polynomials cannot be contained in $I(V)$, since then $P$ would also lie in $I(V)$. Assume without loss of generality that $P_1 \notin I(V)$. The result follows by renaming $P=P_1$ and observing that, for every $t \in T$, since this polynomial vanishes at all points of $t(\R)$, it must also vanish at its complexification.
\end{proof}

\begin{proof}[Proof of Theorem \ref{2}]
The proof of Theorem \ref{GSiegel} adapts almost verbatim to this case. Our assumptions allow us to take the admissible integer $s$ to be equal to $n-d$. Therefore, $\Delta_s(V)=\deg(V)$ and the assertion that $\deg(\mathcal{H}) \in \mathcal{R}_{s,\tau_{l-1}}^{l-1}(V)$, for the algebraic set $\mathcal{H}$ constructed in the proof, becomes the assertion that $\deg(\mathcal{H}) \ge \tau_{l-1} \delta(V)^{d-l+1} \deg(V)$. This allows us to apply the induction hypothesis, with the base case given by Lemma \ref{reduciblecase}. Notice that in the proof we can require $\tau_l$ to be sufficiently small since the result will then obviously hold for all larger values of this parameter.
\end{proof}

\section{Estimating the partial degrees}
\label{S5}

The following lemma is clear upon taking a generic linear combination.

\begin{lema}
\label{simplelinear}
Let $S,V_1,\ldots,V_r$ be subsets of $\mathbb{C}^n$. Let $f_1,\ldots,f_r$ be polynomials such that they all vanish on $S$ but each $f_i$ does not vanish identicaly on $V_i$. Then, there is a linear combination $f=c_1 f_1 + \ldots + c_r f_r$, with real coefficients, such that $f$ vanishes on $S$ but does not vanish identically on any $V_i$.
\end{lema}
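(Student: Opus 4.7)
The plan is to argue that a generic real linear combination works, which I will make precise via a dimension-count over $\mathbb{R}^r$. Since every $f_i$ vanishes on $S$, any $\mathbb{R}$-linear combination $f = c_1 f_1 + \ldots + c_r f_r$ automatically vanishes on $S$. So the content of the statement is to arrange that $f$ does not vanish identically on any $V_i$, and it suffices to find a single $(c_1,\ldots,c_r) \in \mathbb{R}^r$ satisfying all $r$ non-vanishing conditions simultaneously.

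For each $1 \le i \le r$, I would consider the set
$$W_i = \{ (c_1,\ldots,c_r) \in \mathbb{R}^r : c_1 f_1 + \ldots + c_r f_r \text{ vanishes identically on } V_i \}.$$
For every point $p \in V_i$, the vanishing condition $c_1 f_1(p) + \ldots + c_r f_r(p) = 0$ amounts to two real linear equations in $(c_1,\ldots,c_r)$ (the real and imaginary parts), so $W_i$ is an $\mathbb{R}$-linear subspace of $\mathbb{R}^r$. The key observation is that $W_i$ is a \emph{proper} subspace: the standard basis vector $e_i$ corresponds to the polynomial $f_i$ itself, which by hypothesis does not vanish identically on $V_i$, so $e_i \notin W_i$.

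To conclude, I would invoke the standard fact that a finite union of proper $\mathbb{R}$-linear subspaces of $\mathbb{R}^r$ cannot equal $\mathbb{R}^r$ (since $\mathbb{R}$ is infinite). Therefore there exists $(c_1,\ldots,c_r) \in \mathbb{R}^r \setminus \bigcup_{i=1}^r W_i$, and the corresponding polynomial $f = c_1 f_1 + \ldots + c_r f_r$ has real coefficients, vanishes on $S$, and fails to vanish identically on any $V_i$, as required. There is no real obstacle here: the argument is the standard genericity principle, and the only mild point worth making explicit is that although the $f_i$ may have complex coefficients, the conditions on the real coefficients $c_i$ are $\mathbb{R}$-linear, so the subspaces $W_i$ really do live in $\mathbb{R}^r$.
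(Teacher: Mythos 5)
Your argument is correct and is precisely the ``generic linear combination'' argument that the paper invokes without proof: each bad set $W_i$ is a proper $\mathbb{R}$-linear subspace of $\mathbb{R}^r$ (proper because $e_i \notin W_i$), and a finite union of proper subspaces cannot cover $\mathbb{R}^r$. Nothing further is needed.
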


We will be needing the next definition in the rest of this article.

\begin{defi}
Let $V \subseteq \C^n$ be an irreducible variety and let $0 \le s \le n-d$ be an integer. We say an irreducible variety $V' \subseteq \C^n$ containing $V$ is an $(n-s)$-minimal variety of $V$ if $\dim(V')=n-s$ and every polynomial of $I(V) \setminus I(V')$ has degree at least $\delta_{s+1}(V)$.
\end{defi}

We can deduce the following estimate from the definition of the partial degree.

\begin{lema}
\label{minimalP}
Let $V \subseteq \C^n$ be an irreducible variety of dimension $d$. Then, there exist polynomials $P_1,\ldots,P_{n-d}$ such that, for every $1 \le i \le n-d$, the maximal dimension of an irreducible component of $Z(P_1,\ldots,P_i)$ containing $V$ is $n-i$ and $Z(P_1,\ldots,P_i)$ contains an $(n-i)$-minimal variety of $V$. Furthermore, they can be chosen so that
$$(\deg(P_1),\ldots,\deg(P_{n-d})) = (\delta_1(V), \ldots, \delta_{n-d}(V)) \le (\deg(P_1^{\ast}),\ldots,\deg(P_{n-d}^{\ast}))$$ 
under lexicographical order, for any other set of polynomials $P_1^{\ast},\ldots,P_{n-d}^{\ast}$ satisfying the same conclusions.
\end{lema}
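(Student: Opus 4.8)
The plan is to build $P_1,\dots,P_{n-d}$ recursively, carrying along at the $i$-th stage an $(n-i)$-minimal variety $W_i$ of $V$ with $W_i\subseteq Z(P_1,\dots,P_i)$, and to deduce the lexicographic minimality afterwards. Two reformulations make the bookkeeping tractable. First, intersecting with one polynomial at a time drops the largest dimension of a component containing $V$ by at most $1$ (Krull's Hauptidealsatz, as in the proof of Lemma~\ref{dorder}), so $\delta_i(V)$ is the least $D$ for which some polynomials of degree $\le D$ in $I(V)$ have common zero set all of whose components containing $V$ have dimension $\le n-i$. Second, for an irreducible $Y\supseteq V$, every element of $I(V)\setminus I(Y)$ has degree $\ge\delta_{i+1}(V)$ exactly when $Y\subseteq Z^{-}:=Z\big(I(V)_{<\delta_{i+1}(V)}\big)$; since $Z^{-}$ is cut out by finitely many polynomials of degree $<\delta_{i+1}(V)$, the first reformulation forces it to have a component containing $V$ of dimension $\ge n-i$, so that any irreducible $Y$ with $V\subseteq Y\subseteq Z^{-}$ and $\dim Y=n-i$ is automatically an $(n-i)$-minimal variety of $V$. (For $i=n-d$ one has $\delta_{i+1}(V)=\infty$ and $Z^{-}=V$, so the $d$-minimal variety is just $V$; for $i=1$ one starts from the empty product with $W_0=\C^n$, which is $n$-minimal because $\delta_1(V)$ is the least degree of a nonzero element of $I(V)$.)

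For the inductive step, assume $P_1,\dots,P_{i-1}$ have been produced with $\deg P_j=\delta_j(V)$, with the largest dimension of a component of $Z(P_1,\dots,P_{i-1})$ containing $V$ equal to $n-i+1$, and with a fixed $(n-i+1)$-minimal variety $W\subseteq Z(P_1,\dots,P_{i-1})$; then $W$ is one of the finitely many components $U_1=W,U_2,\dots,U_k$ of $Z(P_1,\dots,P_{i-1})$ of dimension $n-i+1$ containing $V$. Fix $g_1,\dots,g_t\in I(V)$ of degree $\le\delta_i(V)$ witnessing $\delta_i(V)$; for each $U_a$ some $g_j$ does not vanish on $U_a$, since otherwise $U_a\subseteq Z(g_1,\dots,g_t)$ would contradict the choice of the $g_j$. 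Now split on whether $\delta_i(V)<\delta_{i+1}(V)$ or $\delta_i(V)=\delta_{i+1}(V)$. If $\delta_i(V)<\delta_{i+1}(V)$, then $P_1,\dots,P_{i-1}$ (using Lemma~\ref{dorder}) and the generators of $Z^{-}$ all have degree $<\delta_{i+1}(V)$, so by the first reformulation $Z(P_1,\dots,P_{i-1})\cap Z^{-}$ has a component containing $V$ of dimension $\ge n-i$; pick inside it an irreducible $W_i$ with $V\subseteq W_i$ and $\dim W_i=n-i$, which is $(n-i)$-minimal by the above. Every $g_j$ lies in $I(W_i)$ (as $\deg g_j<\delta_{i+1}(V)$ and $W_i\subseteq Z^{-}$), so for each $a$ there is $g_{j_a}\in I(W_i)\setminus I(U_a)$, and Lemma~\ref{simplelinear} yields a real linear combination $P_i$ of the $g_{j_a}$ vanishing on $W_i$ (hence on $V$) but on no $U_a$, with $\deg P_i\le\delta_i(V)$. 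If instead $\delta_i(V)=\delta_{i+1}(V)$, then $W\subseteq Z^{-}$ directly; choose $g_{j_a}\notin I(U_a)$ for each $a$, let $P_i$ be a real linear combination of the $g_{j_a}$ (via Lemma~\ref{simplelinear}) vanishing on $V$ but on no $U_a$, with $\deg P_i\le\delta_i(V)$, and let $W_i$ be the component containing $V$ of the set $W\cap Z(P_i)$.

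In both cases I then verify the conclusions at stage $i$. Each component of $Z(P_1,\dots,P_i)$ containing $V$ lies in some $U_b$; as $P_i$ does not vanish on any $U_b$ of dimension $n-i+1$, all such components have dimension $\le n-i$, while $W\cap Z(P_i)$ is pure of dimension $n-i$ (since $W$ is irreducible of dimension $n-i+1$ and $P_i\notin I(W)$) and contains $V$, so the largest such dimension is exactly $n-i$. Also $W_i\subseteq Z(P_1,\dots,P_i)$, $\dim W_i=n-i$, and $W_i\subseteq Z^{-}$ (by construction if $\delta_i(V)<\delta_{i+1}(V)$, and because $W_i\subseteq W\subseteq Z^{-}$ otherwise), so $W_i$ is $(n-i)$-minimal. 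Finally $P_i\notin I(W)$ with $W$ an $(n-i+1)$-minimal variety forces $\deg P_i\ge\delta_i(V)$, hence $\deg P_i=\delta_i(V)$, and the recursion continues with degrees $(\delta_1(V),\dots,\delta_{n-d}(V))$. For lexicographic minimality, if $P_1^{*},\dots,P_{n-d}^{*}$ satisfy the same conclusions and their first $i-1$ degrees equal $\delta_1(V),\dots,\delta_{i-1}(V)$ while $\deg P_i^{*}<\delta_i(V)$, pick an $(n-i+1)$-minimal $W^{*}\subseteq Z(P_1^{*},\dots,P_{i-1}^{*})$; since the largest dimension of a component of $Z(P_1^{*},\dots,P_i^{*})$ containing $V$ is $n-i<n-i+1$, we get $P_i^{*}\in I(V)\setminus I(W^{*})$, whose degree is $\ge\delta_i(V)$ --- a contradiction.

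The main obstacle is exactly producing, at each stage, an $(n-i)$-minimal variety that genuinely sits inside $Z(P_1,\dots,P_i)$ rather than merely inside $\C^n$ or inside $Z(P_1,\dots,P_{i-1})$: this is what necessitates the case distinction between $\delta_i(V)<\delta_{i+1}(V)$ and $\delta_i(V)=\delta_{i+1}(V)$ and the care in arranging $P_i$ to vanish on the chosen $W_i$ in the former case. With the two reformulations in hand, the dimension drops, the degree bookkeeping, and the lexicographic comparison are all routine.
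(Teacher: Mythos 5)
Your proposal is correct and follows the same basic mechanism as the paper (recursive construction of the $P_i$, with $P_i$ a generic real linear combination, via Lemma~\ref{simplelinear}, of polynomials of degree $\le\delta_i(V)$ from $I(V)$ that each avoid one high-dimensional component), but the organization is genuinely different in a way worth noting. The paper first builds $P_i$ as a combination of the distinguishing polynomials $f_j$, then proves $\deg P_i=\delta_i(V)$ by the contradiction argument with the smallest $s$ satisfying $\delta_s(V)=\delta_i(V)$, and only \emph{afterwards} deduces that some $(n-i+1)$-dimensional component $V_j$ of $Z(P_1,\dots,P_{i-1})$ must be $(n-i+1)$-minimal (because if every $V_j$ admitted a low-degree polynomial in $I(V)\setminus I(V_j)$ one could lower $\deg P_i$). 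You instead make the minimal variety the central object: you exhibit it directly as an irreducible $W_i$ of dimension $n-i$ squeezed between $V$ and $Z^-=Z\big(I(V)_{<\delta_{i+1}(V)}\big)$, carry it forward inductively, and then choose $P_i$ to vanish on $W_i$. This buys a noticeably cleaner degree bound (any $P_i$ not vanishing on the maintained $(n-i+1)$-minimal $W$ has degree $\ge\delta_i(V)$ by definition, no $\delta_s$-bookkeeping needed) and a more transparent lexicographic-minimality argument via the same mechanism. The case split on $\delta_i(V)<\delta_{i+1}(V)$ versus equality, which the paper does not need because it does not try to name $W_i$ in advance, is exactly the extra care required to guarantee $W_i\subseteq Z(P_1,\dots,P_i)$ in your formulation; you handle it correctly. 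Both proofs rely on the same underlying facts (Krull to control dimension drops, Lemma~\ref{simplelinear} for the generic linear combination, and the finite-dimensionality of $I(V)_{\le m}$ to make $Z^-$ and the $g_j$ available), and both are sound.
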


\begin{proof}
We construct the polynomials $P_i$ recursively on $i$, taking $P_1$ to be any polynomial of degree $\delta_1(V)$ vanishing on $V$. Let $V_1,\ldots,V_r$ be the irreducible components of dimension $n-i+1$ containing $V$ that lie inside of $Z(P_1,\ldots,P_{i-1})$. By definition of $\delta_i(V)$, for each $V_j$ we can find a polynomial $f_j$ of degree at most $\delta_i(V)$ that vanishes on $V$ without vanishing on $V_j$. It follows from Lemma \ref{simplelinear} that we can find a linear combination $f$ with real coefficients of these polynomials $f_j$ such that $f$ vanishes on $V$ and cuts each $V_i$ properly. We take $P_i = f$ and assume for contradiction that $\deg(P_i) < \delta_i(V)$. Let $s \le i$ be the smallest integer with $\delta_s(V)=\delta_i(V)$ and let $W_1,\ldots,W_t$ be the irreducible components of $Z(P_1,\ldots,P_{s-1})$ of dimension $n-s+1$ containing $V$. Each $W_m$ then contains one of the previously defined $V_j$ and so we see in particular that $P_i$ cuts each $W_m$ properly. But this means that the maximal dimension of an irreducible component of $Z(P_1,\ldots,P_{s-1},P_i)$ containing $V$ is $n-s$ while $\deg(P_1), \ldots, \deg(P_{s-1}), \deg(P_i) < \delta_{s}(V)$, thus contradicting the definition of the latter quantity. This argument shows that $(\deg(P_1),\ldots,\deg(P_i)) \le (\deg(P_1^{\ast}),\ldots,\deg(P_i^{\ast}))$, under lexicographical order, for any tuple of polynomials $P_i^{\ast}$ as in the statement of the lemma. Notice also that we have shown there is some $V_j$ with every polynomial of $I(V) \setminus I(V_j)$ having degree at least $\delta_i(V)$. Therefore $V_j$ is an $(n-i+1)$-minimal variety of $V$. Iterating this procedure until $i=n-d$ it only remains to show that $Z(P_1,\ldots,P_{n-d})$ contains a $d$-minimal variety, but this is of course $V$ itself. The result follows.
\end{proof}

From Lemma \ref{Bezout} and Lemma \ref{minimalP} we clearly get the following corollary.

\begin{coro}
\label{Bezu}
Every irreducible variety $V$ of dimension $d$ satisfies
$$ \deg(V) \le \prod_{i=1}^{n-d} \delta_i(V).$$
\end{coro}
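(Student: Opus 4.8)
The goal is to prove that every irreducible variety $V$ of dimension $d$ satisfies $\deg(V) \le \prod_{i=1}^{n-d} \delta_i(V)$. The plan is to extract the conclusion directly from the polynomials produced by Lemma \ref{minimalP} together with the multiplicativity in Bezout's inequality (Lemma \ref{Bezout}).

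First I would invoke Lemma \ref{minimalP} to obtain polynomials $P_1,\ldots,P_{n-d}$ with $\deg(P_i)=\delta_i(V)$ such that, for every $1 \le i \le n-d$, the maximal dimension of an irreducible component of $Z(P_1,\ldots,P_i)$ containing $V$ equals $n-i$. In particular, taking $i=n-d$, the variety $V$ itself (which has dimension $d=n-(n-d)$ and lies in $Z(P_1,\ldots,P_{n-d})$) is among the top-dimensional irreducible components of $Z(P_1,\ldots,P_{n-d})$.

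Next I would apply Bezout's inequality in the form of Lemma \ref{Bezout} with $W=\C^n$ (so $\deg(W)=1$) and the polynomials $f_1,\ldots,f_{n-d}=P_1,\ldots,P_{n-d}$. Writing $Z_1,\ldots,Z_r$ for the irreducible components of $Z(P_1,\ldots,P_{n-d})$, this gives
$$\sum_{j=1}^r \deg(Z_j) \le \prod_{i=1}^{n-d}\deg(P_i) = \prod_{i=1}^{n-d}\delta_i(V).$$
Since $V$ is one of the $Z_j$, the left-hand side is at least $\deg(V)$, and the claimed bound follows immediately.

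There is essentially no obstacle here: the real content was already carried out in Lemma \ref{minimalP}, whose proof produces a system of $n-d$ polynomials cutting $V$ out (as a component) with degrees exactly the partial degrees $\delta_i(V)$; the present corollary is just the combination of that fact with the standard multiplicativity in Bezout's inequality. The only minor point to note is that one uses the variant of Bezout with $W = \C^n$ so that the product of the degrees of the $P_i$ is not multiplied by any extra factor, and that membership of $V$ among the irreducible components ensures $\deg(V)$ appears on the smaller side of the inequality.
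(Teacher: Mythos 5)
Your argument is correct and is exactly what the paper intends: combine Lemma \ref{minimalP} (which produces $P_1,\ldots,P_{n-d}$ of degrees $\delta_1(V),\ldots,\delta_{n-d}(V)$ having $V$ as a top-dimensional, hence irreducible, component of $Z(P_1,\ldots,P_{n-d})$) with Bezout's inequality applied with $W=\C^n$. The paper states this corollary with no further proof precisely because it follows "clearly" from those two lemmas, as you have written out.
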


It turns out that both sides of Corollary \ref{Bezu} are equal up to an $O_n(1)$-constant. In fact, we have the following general result.

\begin{teo}
\label{clases}
Let $V \subseteq \C^n$ be an irreducible variety of dimension $d$ and let $P_1,\ldots,P_{n-d}$ be the corresponding polynomials given by Lemma \ref{minimalP}. Then, for every $1 \le m \le n-d$, every irreducible component $W$ of $Z(P_1,\ldots,P_m)$ having dimension $n-m$ and containing $V$ has degree $\sim_n \prod_{i=1}^m \delta_i(V)$ and satisfies $\delta_i(W) \sim_n \delta_i(V)$ for every $1 \le i \le m$.
\end{teo}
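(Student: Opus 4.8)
The upper bound on $\deg(W)$ is immediate: since $W$ is an irreducible component of $Z(P_1,\ldots,P_m) \cap V'$ for a suitable minimal variety, Bezout's inequality (Lemma \ref{Bezout}) applied to the polynomials $P_1,\ldots,P_m$ of degrees $\delta_1(V),\ldots,\delta_m(V)$ gives $\deg(W) \le \prod_{i=1}^m \delta_i(V)$. The work is entirely in the lower bound $\deg(W) \gtrsim_n \prod_{i=1}^m \delta_i(V)$ together with the estimates $\delta_i(W) \sim_n \delta_i(V)$; once these are in hand, the two bounds on $\deg(W)$ pinch it to the claimed value. So I would aim first at controlling the partial degrees $\delta_i(W)$.

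First I would prove $\delta_i(W) \lesssim_n \delta_i(V)$ for $1 \le i \le m$. Fix $i$. The polynomials $P_1,\ldots,P_i$ all vanish on $V \subseteq W$, have degree $\le \delta_i(V)$, and by the properties in Lemma \ref{minimalP} cut down the dimension appropriately; restricting attention to $W$, the component of $Z(P_1,\ldots,P_i)$ containing $V$ that sits inside $W$ has dimension $n-i = \dim(W) - (i - m) \cdots$ — here one has to be a little careful, since inside $W$ we only have codimension $i-m$ available after the first $m$ of the $P_j$'s have already "used up" the codimension of $W$. The honest statement is: the $P_{m+1},\ldots,P_i$ restricted to $W$ are $i-m$ polynomials of degree $\le \delta_i(V)$ whose common zero set on $W$ has a component containing $V$ of dimension $(n-m)-(i-m) = n-i$, so $\delta_{i-m}(W) \le \delta_i(V) \le \delta_m(V) \cdot (\text{stuff})$. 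To get the indexing to match — i.e. to compare $\delta_j(W)$ with $\delta_{j+m}(V)$ for $0 \le j \le \dim(W) - d$ — I would combine this with the trivial bound coming from $W$ itself being an $(n-m)$-minimal-type variety, and with Lemma \ref{dorder} to fill in the monotonicity. The cleanest route is probably: show the tuple $(\delta_{m+1}(V),\ldots,\delta_{n-d}(V))$ is (up to $O_n(1)$ factors) an admissible choice of defining-polynomial degrees for $W$ realizing its own partial degrees, using the minimality clause of Lemma \ref{minimalP} for the reverse inequality.

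For the lower bounds $\delta_i(W) \gtrsim_n \delta_i(V)$ and $\deg(W) \gtrsim_n \prod_{i=1}^m \delta_i(V)$ I would argue by contradiction using the defining property of the $\delta_i(V)$. Suppose $\delta_j(W)$ were much smaller than $\delta_j(V)$ for some $j \le m$. Then $W$ — which contains $V$ — can be cut down from dimension $n-m$ by $j-m+(\dim W - d) \cdots$; more to the point, concatenating the polynomials $P_1,\ldots,P_{j-1}$ (degrees $< \delta_j(V)$) with a cheap system of polynomials witnessing $\delta_{\text{appropriate}}(W)$ would produce a system of polynomials, all of degree $< \delta_j(V)$, whose zero set contains $V$ with all components through $V$ of dimension $< n-j$ — contradicting the definition of $\delta_j(V)$. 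The same contradiction mechanism handles $\deg(W)$: if $\deg(W)$ were small, then since $\delta_i(W) \lesssim_n \delta_i(V) \lesssim_n \delta(V)$-type bounds hold, Theorem \ref{2} (in the form allowing the reducible / general-degree variants, applied with $V$ playing the role of the low-dimensional set $T$ inside $W$) would furnish a polynomial of too-small degree vanishing on $V$ but not on $W$, again contradicting minimality of the $\delta_i(V)$ via Lemma \ref{minimalP}. Finally, combining $\deg(W) \le \prod_{i=1}^m \delta_i(V)$ with the lower bound, and feeding $\delta_i(W) \sim_n \delta_i(V)$ back through Corollary \ref{Bezu} applied to $W$, pins everything down.

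**Main obstacle.** The genuine difficulty is bookkeeping the index shift: the partial degrees $\delta_i(W)$ of $W$ are indexed by codimension within $W$ (so $1 \le i \le n-m$ down to... wait, $\dim W = n-m$ so $1 \le i \le (n-m)-d$), whereas we want to match them against $\delta_{m+i}(V)$. One must show simultaneously that the \emph{first} $m$ partial degrees of $V$ are "explained" by $P_1,\ldots,P_m$ and that the \emph{remaining} ones transfer to $W$, and the obvious inequalities only go one way unless one invokes the lexicographic-minimality clause of Lemma \ref{minimalP} and the converse-Bezout input (Theorem \ref{2} / Theorem \ref{3}) to squeeze from the other side. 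Getting the constants to stay $O_n(1)$ through the induction on $m$ — rather than degrading by a factor at each step — requires that the $\lesssim_n$ in the contradiction step be uniform, which is exactly what Theorem \ref{2} and Theorem \ref{8tool} are set up to provide; I would organize the proof as a downward induction on $m$ so that the estimates for $Z(P_1,\ldots,P_{m+1})$ feed into those for $Z(P_1,\ldots,P_m)$ cleanly.
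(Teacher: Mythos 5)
Your overall skeleton (Bezout for the upper bound on $\deg(W)$, a minimality contradiction for the lower bound, Corollary \ref{Bezu} to pinch the $\delta_i(W)$) agrees with the paper, but there are two substantive problems. First, the ``index shift'' you flag as the main obstacle is a phantom: by Definition \ref{delta}, the partial degrees $\delta_i(W)$ are indexed by the codimension of $W$ in $\C^n$ (so $1 \le i \le m$, since $\dim W = n-m$), not by the codimension of $V$ inside $W$, and the theorem compares $\delta_i(W)$ with $\delta_i(V)$ for the \emph{same} index $i$. With this read correctly, $\delta_i(W) \le \delta_i(V)$ is essentially immediate from $V \subseteq W \subseteq Z(P_1,\ldots,P_i)$ together with Lemma \ref{dorder} if needed, and the bookkeeping you describe about comparing $\delta_j(W)$ with $\delta_{j+m}(V)$ does not arise.

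Second, and more seriously, the direction of your Siegel-lemma application for the lower bound on $\deg(W)$ is reversed. You propose applying Theorem \ref{2} with $T=V$ inside ambient $W$ to get a low-degree polynomial vanishing on $V$ but not on $W$. But Theorem \ref{2}'s degree bound is $\lesssim (\deg(T)/\deg(W))^{1/(\dim W - l)}$, which becomes \emph{large} when $\deg(W)$ is small, so this produces no useful polynomial. The paper uses the opposite direction: if $\deg(W) \le \varepsilon \prod_{i\le m}\delta_i(V)$, then $W$ is cheap to cut out, so Theorem \ref{GSiegel} with $T=W$ inside each component $Z_j$ of $Z(P_1,\ldots,P_{m-1})$ containing $V$ (of dimension $n-m+1$) furnishes polynomials of degree $< \delta_m(V)$ vanishing on $W$ (hence on $V$) without vanishing on any $Z_j$; taking a real linear combination via Lemma \ref{simplelinear} and appending it to $P_1,\ldots,P_{m-1}$ then contradicts the lexicographic minimality of Lemma \ref{minimalP}. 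This also dictates the induction direction: the paper goes upward in $m$ (not downward as you suggest), since the estimate $\Delta_{s_j}(Z_j) \sim_n \prod_{i\le s_j}\delta_i(V)$ needed in the Siegel step is the inductive hypothesis for the components of $Z(P_1,\ldots,P_{m-1})$.
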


\begin{proof}
We fix $V$ and induct on $m$, the result being obvious for $m=1$ from the definition of $\delta_1(V)$. Let $W$ be as in the statement and write $Z_1,\ldots,Z_r$ for the irreducible components of $Z(P_1,\ldots,P_{m-1})$ of dimension $n-m+1$ that contain $V$, so in particular we know by induction that the result holds for them. 

Since $W$ is an irreducible component of $Z(P_1,\ldots,P_m)$, we know that
$$ \deg(W) \le \prod_{i=1}^{m} \deg(P_i) = \prod_{i=1}^{m} \delta_i(V).$$
Let now $\varepsilon \gtrsim_n 1$ be a sufficiently small constant with respect to $n$ and assume for contradiction that 
\begin{equation}
\label{eas}
\deg(W) \le \varepsilon \prod_{i=1}^{m} \delta_i(V).
\end{equation}
We know by Theorem \ref{GSiegel} (and Lemma \ref{coverN}) that for every $Z_j$ we can find a polynomial $Q_j$ of degree at most 
$$ \lesssim_n \left( \frac{\deg(W)}{\Delta_{s_j}(Z_j)} \right)^{\frac{1}{m-s_j}},$$
vanishing on $W$ without vanishing on $Z_j$, for an appropriate $0 \le s_j < m$. Now notice that by induction we know that $\Delta_{s_j}(Z_j) \sim_n \prod_{i=1}^{s_j} \delta_i(V)$ and therefore, by (\ref{eas}) and Lemma \ref{dorder}, we have that 
$$ \deg(Q_j) \lesssim_n \left( \frac{\varepsilon \prod_{i=1}^{m} \delta_i(V)}{\prod_{i=1}^{s_j} \delta_i(V)} \right)^{\frac{1}{m-s_j}} \lesssim_n \varepsilon^{\frac{1}{m-s_j}} \delta_{m}(V).$$
This quantity is strictly less than $\delta_{m}(V)$, provided $\varepsilon$ was chosen sufficiently small. We know from Lemma \ref{simplelinear} that we can find some real linear combination $Q$ of the $Q_j$ such that $Q$ vanishes on $W$ without vanishing on any $Z_j$. But this means that $Z(P_1,\ldots,P_{m-1},Q)$ contains $W$, and in particular $V$, while each of its irreducible components containing $V$ has dimension at most $n-m$. Since $\deg(Q) < \delta_{m}(V)=\deg(P_{m})$, this gives us a contradiction by the definition of the polynomials $P_1,\ldots,P_{n-d}$. We have thus shown that
\begin{equation}
\label{ds}
\deg(W) \sim_n  \prod_{i=1}^{m} \delta_i(V).
\end{equation}
Finally, for every $1 \le i \le m$, we have that $V \subseteq W \subseteq Z(P_1,\ldots,P_i)$, from where we know that $\delta_i(W) \le \delta_i(V)$. Combining this with (\ref{ds}) and Corollary \ref{Bezu} applied to $W$, we conclude that it must be $\delta_i(W) \sim_n \delta_i(V)$ for every $1 \le i \le m$, as desired.
\end{proof}

Although we shall not need it, the following is an easy consequence of Theorem \ref{clases} for real varieties.

\begin{coro}
Let $V \subseteq \C^n$ be an irreducible variety of dimension $d$. If $V$ is the complexification of $V(\R)$, then there exist real polynomials $f_1,\ldots,f_{n-d}$ with $\prod_{i=1}^{n-d} \deg(f_i) \lesssim_n \deg(V)$ such that $V$ is an irreducible component of $Z(f_1,\ldots,f_{n-d})$.
\end{coro}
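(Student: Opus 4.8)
The plan is to combine Lemma~\ref{minimalP} with Theorem~\ref{clases}, the only genuine work being to check that the polynomials produced by Lemma~\ref{minimalP} can be chosen with real coefficients once we assume that $V$ is the complexification of $V(\R)$.

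First I would record the consequence of this hypothesis. If $V$ is the Zariski closure over $\C$ of $V(\R)$ and $f = f_1 + \mathbf{i} f_2 \in I(V)$ with $f_1,f_2$ real, then $f$ vanishes on $V(\R)$, hence so do $f_1$ and $f_2$, and since $V$ is the Zariski closure of its real points this forces $f_1,f_2 \in I(V)$. Thus $I(V)$ is spanned over $\C$ by its real elements; in particular, for each $m$, $I(V)_{\le m}$ admits a basis of real polynomials and $H_{I(V),\R}(m) = H_{I(V)}(m)$ (compatibly with Lemma~\ref{HI}). I would then revisit the construction in the proof of Lemma~\ref{minimalP} and observe that it goes through verbatim over $\R$: the polynomial $P_1$ of minimal degree $\delta_1(V)$ vanishing on $V$ can be taken real since $I(V)_{\le \delta_1(V)}$ is nonzero and has a real basis; and at the $i$-th step, for each irreducible component $V_j$ of dimension $n-i+1$ of $Z(P_1,\ldots,P_{i-1})$ containing $V$, the required polynomial of degree at most $\delta_i(V)$ in $I(V)\setminus I(V_j)$ can be taken real, because if no real polynomial of $I_{\R}(V)_{\le\delta_i(V)}$ failed to vanish on $V_j$ then no complex one would either, contradicting Lemma~\ref{minimalP}. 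Feeding these real polynomials into Lemma~\ref{simplelinear}, whose output is already a real linear combination, produces a real $P_i$, and the degree identity $\deg(P_i)=\delta_i(V)$ together with the dimension statement about $Z(P_1,\ldots,P_i)$ is unchanged.

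Taking $i = n-d$, we obtain real polynomials $P_1,\ldots,P_{n-d}$ with $\deg(P_i)=\delta_i(V)$ such that the maximal dimension of an irreducible component of $Z(P_1,\ldots,P_{n-d})$ containing $V$ equals $n-(n-d)=d=\dim V$; hence the irreducible component of $Z(P_1,\ldots,P_{n-d})$ containing $V$ has dimension exactly $d$ and therefore coincides with $V$, so $V$ is an irreducible component. Finally, applying Theorem~\ref{clases} with $m=n-d$ and $W=V$ gives $\deg(V)\sim_n \prod_{i=1}^{n-d}\delta_i(V) = \prod_{i=1}^{n-d}\deg(P_i)$, so in particular $\prod_{i=1}^{n-d}\deg(P_i)\lesssim_n \deg(V)$, and setting $f_i=P_i$ completes the proof. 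The only step requiring any care is the descent to real coefficients inside Lemma~\ref{minimalP}; everything else is a direct invocation of results already established.
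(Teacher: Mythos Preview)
Your proof is correct and follows exactly the approach the paper intends: the paper merely records this corollary as ``an easy consequence of Theorem~\ref{clases} for real varieties'' without giving details, and your argument supplies precisely those details by observing that the hypothesis $V=\overline{V(\R)}$ forces $I(V)_{\le m}$ to be spanned by real polynomials, so the construction in Lemma~\ref{minimalP} descends to $\R$ and Theorem~\ref{clases} then gives the degree bound.
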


Theorem \ref{clases} also allows us to deduce the following lower bound for the degree of an $(n-k)$-dimensional variety containing $V$.

\begin{coro}
\label{SB}
Consider an irreducible variety $V \subseteq \C^n$ of dimension $d$ and let $W$ be an irreducible variety of dimension $n-k$ containing $V$. Then $\deg(W) \gtrsim_n \delta_1(V) \cdots \delta_k(V)$.
\end{coro}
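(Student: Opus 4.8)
The plan is to adapt, essentially verbatim, the argument that establishes the lower bound $\deg(W)\gtrsim_n\prod_{i=1}^m\delta_i(V)$ in the proof of Theorem \ref{clases}; the key observation is that that argument never genuinely uses that $W$ is an irreducible component of $Z(P_1,\ldots,P_m)$, only that $W$ is irreducible, contains $V$, has dimension $n-m$, and (for the contradiction step) has small degree. Write $k=n-\dim(W)$, so $1\le k\le n-d$. I would dispose of the case $k=1$ directly: if $W$ is a hypersurface containing $V$, then $I(W)$ is principal, generated by an irreducible polynomial of degree $\deg(W)$, and this polynomial lies in $I(V)$, whence $\delta_1(V)\le\deg(W)$ straight from Definition \ref{delta}.

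Now assume $k\ge 2$ and, for contradiction, that $\deg(W)<\varepsilon\prod_{i=1}^k\delta_i(V)$ for some small $\varepsilon\gtrsim_n 1$ to be fixed. Fix the polynomials $P_1,\ldots,P_{n-d}$ attached to $V$ by Lemma \ref{minimalP} and let $Z_1,\ldots,Z_r$ be the irreducible components of $Z(P_1,\ldots,P_{k-1})$ of dimension $n-k+1$ containing $V$. By Theorem \ref{clases} (applied with $m=k-1$) we have $\deg(Z_j)\sim_n\prod_{i=1}^{k-1}\delta_i(V)$ and $\delta_i(Z_j)\sim_n\delta_i(V)$ for $1\le i\le k-1$, hence $\Delta_s(Z_j)\sim_n\prod_{i=1}^s\delta_i(V)$ for $0\le s\le k-1$. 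Since $\dim(W)=n-k<\dim(Z_j)$, Lemma \ref{coverN} supplies, for each $j$, an admissible integer $s_j$ with respect to $Z_j$ and some $\tau\sim_n 1$ with $\deg(W)\in\mathcal{R}_{s_j,\tau}^{n-k}(Z_j)$, and Theorem \ref{GSiegel}, applied with ambient variety $Z_j$ and $T=\{W\}$, then yields a polynomial $Q_j$ vanishing on $W$ but not identically on $Z_j$ with
$$\deg(Q_j)\lesssim_n\left(\frac{\deg(W)}{\Delta_{s_j}(Z_j)}\right)^{\frac{1}{k-s_j}}\lesssim_n\left(\varepsilon\prod_{i=s_j+1}^k\delta_i(V)\right)^{\frac{1}{k-s_j}}\le\varepsilon^{1/k}\delta_k(V),$$
the last step using $\delta_i(V)\le\delta_k(V)$ (Lemma \ref{dorder}) and $1\le k-s_j\le k$. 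Taking $\varepsilon\gtrsim_n 1$ small enough, which is possible since $k\le n$, forces $\deg(Q_j)<\delta_k(V)$ for all $j$. By Lemma \ref{simplelinear} there is a nonzero real linear combination $Q$ of the $Q_j$ that vanishes on $W$ but not identically on any $Z_j$; then $\deg(Q)<\delta_k(V)$ and, since $V\subseteq W$ and every $P_i\in I(V)$, the set $Z(P_1,\ldots,P_{k-1},Q)$ contains $V$ with every irreducible component through $V$ of dimension $\le n-k$.

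Finally I would convert this into a contradiction with the definition of the partial degrees exactly as in the proof of Lemma \ref{minimalP}. Let $s^\ast\le k$ be smallest with $\delta_{s^\ast}(V)=\delta_k(V)$, so $\delta_i(V)<\delta_{s^\ast}(V)$ for $i<s^\ast$. An irreducible component of $Z(P_1,\ldots,P_{s^\ast-1})$ of dimension $n-s^\ast+1$ containing $V$, after intersecting with $P_{s^\ast},\ldots,P_{k-1}$, still contains through $V$ an irreducible subvariety of dimension $\ge n-k+1$; being contained in $Z(P_1,\ldots,P_{k-1})$, whose components through $V$ have dimension at most $n-k+1$, that subvariety must be one of the $Z_j$. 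Hence $Q$ does not vanish identically on any such component and so cuts each of them properly, which means $Z(P_1,\ldots,P_{s^\ast-1},Q)$ contains $V$ with all components through $V$ of dimension $\le n-s^\ast$; since all its defining polynomials have degree $<\delta_{s^\ast}(V)$, thinning the family via Krull's Hauptidealsatz (as in Lemma \ref{dorder}) until the top dimension of a component through $V$ is exactly $n-s^\ast$ contradicts the definition of $\delta_{s^\ast}(V)$, giving $\deg(W)\gtrsim_n\prod_{i=1}^k\delta_i(V)$. The one genuinely delicate point is the degree bookkeeping — getting $\deg(Q_j)<\delta_k(V)$ uniformly over the admissible index $s_j$, which is where Theorem \ref{clases} applied to the $Z_j$ and the monotonicity of the $\delta_i(V)$ are essential — together with the auxiliary index $s^\ast$ needed to handle the degenerate case $\delta_{k-1}(V)=\delta_k(V)$; beyond that the proof uses nothing new over Theorems \ref{clases} and \ref{GSiegel} and Lemma \ref{simplelinear}.
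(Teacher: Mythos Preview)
Your proof is correct, but it takes a substantially more elaborate route than the paper's. The paper argues directly by \emph{slicing $W$ down to $V$}: using the definition of the $\delta_s(V)$ together with Lemma \ref{simplelinear}, one finds polynomials $f_{k+1},\ldots,f_{n-d}$ with $\deg(f_s)\le\delta_s(V)$ such that the maximal dimension of a component of $W\cap Z(f_{k+1},\ldots,f_s)$ through $V$ equals $n-s$; in particular $V$ is an irreducible component of $W\cap Z(f_{k+1},\ldots,f_{n-d})$, so by Bezout $\deg(V)\le\deg(W)\prod_{s=k+1}^{n-d}\delta_s(V)$, and the identity $\deg(V)\sim_n\prod_{i=1}^{n-d}\delta_i(V)$ from Theorem \ref{clases} finishes the proof in one line.

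By contrast, you re-run the contradiction machinery inside the proof of Theorem \ref{clases}: assuming $\deg(W)$ is small, you invoke Theorem \ref{GSiegel} to manufacture a polynomial of degree $<\delta_k(V)$ vanishing on $W$ but not on the $(n-k+1)$-dimensional components $Z_j$, and then contradict minimality of $\delta_k(V)$, with the auxiliary index $s^\ast$ needed to handle the degenerate case $\delta_{k-1}(V)=\delta_k(V)$. This is valid, but the paper's approach sidesteps all of that bookkeeping by using Theorem \ref{clases} as a black box through the single consequence $\deg(V)\sim_n\prod_i\delta_i(V)$ and then appealing to Bezout from above rather than Siegel from below. The gain of your approach is that it only uses the case $m=k-1$ of Theorem \ref{clases} rather than $m=n-d$, but since Theorem \ref{clases} is already fully established this is not a genuine economy.
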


\begin{proof}
By definition of the quantities $\delta_s(V)$ and Lemma \ref{simplelinear} we can find polynomials $f_{k+1},\ldots,f_{n-d}$ such that for every $k < s \le n-d$ it is $\deg(f_s) \le \delta_s(V)$ and the maximal dimension of an irreducible component of $W \cap Z(f_{k+1},\ldots,f_s)$ containing $V$ is equal to $n-s$. In particular, we have that $W \cap Z(f_{k+1},\ldots,f_{n-d})$ contains $V$ as an irreducible component and therefore must have degree at least $\deg(V) \sim_n \prod_{i=1}^{n-d} \delta_i(V)$ by Theorem \ref{clases}. On the other hand, by Lemma \ref{Bezout} the degree of this algebraic set is bounded by
$$ \deg(W) \prod_{i=k+1}^{n-d} \delta_i(V).$$
The result follows.
\end{proof}

\section{Envelopes and full covers}
\label{S6}

\subsection{Envelopes} We know by Lemma \ref{minimalP} and Theorem \ref{clases} that, given an irreducible variety $V$ of dimension $d$, we can find polynomials $f_1,\ldots,f_{n-d}$ satisfying a good upper bound on their degrees and with $V$ an irreducible component of $Z(f_1,\ldots,f_{n-d})$. On the other hand, the methods of Barone and Basu \cite{BB0,BB} are well suited to study the number of connected components of $Z(f_1,\ldots,f_{n-d})(\R)$ as long as the points of these components have local real dimension at most $n-i$ in $Z(f_1,\ldots,f_i)$ for every $i$. In order to be able to obtain a result in the general case, we therefore need to be able to keep track of those irreducible components of $Z(f_1,\ldots,f_{i})$ that have dimension strictly larger than $n-i$. Our aim is to show that as long as we allow some further flexibility in the upper bound for the degrees of the $f_i$, we can obtain a good amount of control on these higher dimensional components. Of course, the flexibility we wish to allow has to be limited if we expect to obtain optimal bounds. This motivates the following definition.

\begin{defi}[Admissible tuples]
Let $V \subseteq \C^n$ be an irreducible variety of dimension $d$ and $1 \le K_1 \le \ldots \le K_{n-d}$ real numbers. We say an ordered tuple of polynomials $Q=\left\{ Q_1, \ldots, Q_{n-d} \right\}$ is $(K_1,\ldots,K_{n-d})$-admissible for $V$ if, for every $1 \le i \le n-d$, we have that $\deg(Q_i) \le K_i \delta_i(V)$, the maximal dimension of an irreducible component of $Z(Q_1,\ldots,Q_i)$ containing $V$ is equal to $n-i$ and $Z(Q_1,\ldots,Q_i)$ contains an $(n-i)$-minimal variety of $V$. If $Q$ is $(K_1,\ldots,K_{n-d})$-admissible for $V$ and $K \ge K_{n-d}$ then we may simply say $Q$ is $K$-admissible for $V$.
\end{defi}

We have the following observation.

  \begin{lema}
  \label{lower}
  Let $V \subseteq \C^n$ be an irreducible variety of dimension $d$ and $Q= \left\{ Q_1,\ldots,Q_{n-d} \right\}$ a $(K_1,\ldots,K_{n-d})$-admissible tuple of polynomials for $V$. Let $W$ be an irreducible component of $Z(Q_1,\ldots,Q_i)$ of dimension $n-i$, for some $1 \le i \le n-d$. Then, if $\deg(W) \ge c \delta_1(V) \cdots \delta_i(V)$, it must be $\delta_j(W) \sim_{K_{i},c,n} \delta_j(V)$ for every $ 1 \le j \le i$.
  \end{lema}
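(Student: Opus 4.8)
The plan is to prove the two one-sided estimates $\delta_j(W) \lesssim_{K_i} \delta_j(V)$ and $\delta_j(W) \gtrsim_{K_i,c,n} \delta_j(V)$ for every $1 \le j \le i$ separately; we may assume $d<n$, since otherwise there is nothing to prove.

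\emph{Upper bound.} I would show that the truncated tuple $Q_1,\ldots,Q_j$ already witnesses a good value of $\delta_j(W)$. Let $W_j$ be any irreducible component of $Z(Q_1,\ldots,Q_j)$ containing $W$; such a component exists since $W \subseteq Z(Q_1,\ldots,Q_i) \subseteq Z(Q_1,\ldots,Q_j)$. The claim is that $\dim W_j = n-j$. Krull's Hauptidealsatz gives $\dim W_j \ge n-j$. If instead $\dim W_j \ge n-j+1$, then intersecting $W_j$ with the remaining polynomials $Q_{j+1},\ldots,Q_i$ and applying the Hauptidealsatz $i-j$ further times, the irreducible component $C$ of $W_j \cap Z(Q_{j+1},\ldots,Q_i)$ that contains $W$ satisfies $\dim C \ge \dim W_j - (i-j) > n-i = \dim W$; but $C$ is an irreducible subset of $Z(Q_1,\ldots,Q_i)$ strictly containing the component $W$, a contradiction. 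Running this argument for every component of $Z(Q_1,\ldots,Q_j)$ through $W$ shows the highest dimension of such a component equals $n-j$. Since $\deg(Q_m) \le K_m\delta_m(V) \le K_i\delta_j(V)$ for $1 \le m \le j$ (using $K_1 \le \cdots \le K_{n-d}$ and Lemma \ref{dorder}), the definition of $\delta_j(W)$ gives $\delta_j(W) \le K_i\delta_j(V)$. Note this half uses neither $c$ nor the degree hypothesis on $W$.

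\emph{Lower bound.} This is where the hypothesis $\deg(W) \ge c\,\delta_1(V)\cdots\delta_i(V)$ enters. Since $\dim W = n-i$, Corollary \ref{Bezu} applied to $W$ gives $\prod_{m=1}^{i}\delta_m(W) \ge \deg(W) \ge c\prod_{m=1}^{i}\delta_m(V)$. Dividing through by the product of the $\delta_m(W)$ over $1 \le m \le i$ with $m \neq j$, and bounding each of those $i-1$ factors above by the inequality just obtained, yields $\delta_j(W) \ge (c/K_i^{\,i-1})\,\delta_j(V)$, which is $\gtrsim_{K_i,c,n}\delta_j(V)$ since $i \le n-d \le n$. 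Combining the two bounds gives $\delta_j(W) \sim_{K_i,c,n}\delta_j(V)$ for every $1 \le j \le i$.

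The one delicate point is the dimension count in the upper-bound step: one must rule out that an irreducible component of $Z(Q_1,\ldots,Q_j)$ passing through $W$ is too large, and this is exactly where it is essential that $W$ is a genuine irreducible component of $Z(Q_1,\ldots,Q_i)$ rather than merely a subvariety. The remainder is a routine application of Bezout's inequality via Corollary \ref{Bezu}.
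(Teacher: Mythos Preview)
Your proof is correct and follows essentially the same approach as the paper: establish $\delta_j(W) \lesssim_{K_i} \delta_j(V)$ by observing that the components of $Z(Q_1,\ldots,Q_j)$ through $W$ have dimension exactly $n-j$, then obtain the reverse inequality by comparing $\prod_m \delta_m(W)$ with $\deg(W) \ge c\prod_m \delta_m(V)$. The only difference is that the paper invokes Theorem~\ref{clases} to write $\deg(W) \sim_n \prod_m \delta_m(W)$, whereas you correctly notice that only the easy direction $\prod_m \delta_m(W) \ge \deg(W)$ (Corollary~\ref{Bezu}) is actually needed; your version is therefore slightly more economical, and your Hauptidealsatz argument spells out what the paper dismisses as ``by construction''.
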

  
  \begin{proof}
  This follows from the fact that $\deg(W) \sim_n \prod_{j=1}^{i} \delta_j(W)$ by Theorem \ref{clases} and that by construction, for every $1 \le j \le i$, the maximal dimension of an irreducible component of $Z(Q_1,\ldots,Q_{j})$ containing $W$ must be equal to $n-j$, implying that $\delta_j(W) \lesssim_{K_{j},n} \delta_j(V)$.
  \end{proof}
  
  In order to keep track of the higher-dimensional components arising from an admissible tuple, we introduce the following definitions.

\begin{defi}[Envelopes]
Let $V \subseteq \C^n$ be an irreducible variety of dimension $d$ and $Q= \left\{ Q_1,\ldots,Q_{n-d} \right\}$ a $K$-admissible tuple of polynomials for $V$, for some $K \ge 1$.  For each $1 \le j \le n-d$, we define the $j$-th envelope of $V$ over $Q$ to be the union of all irreducible components of $Z(Q_1,\ldots,Q_j)$ of dimension strictly greater than $n-j$ and write $\mathcal{E}_V^{(j)}(Q)$ for this algebraic set. We also write 
$$\mathcal{E}_{V}(Q) = \bigcup_{j=1}^{n-d} \mathcal{E}_{V}^{(j)}(Q),$$
and call this algebraic set the envelope of $V$ over $Q$. Finally, for every $1 \le j \le n-d$, we write $\mathcal{S}_V^{(j)}(Q)$ for the algebraic set given by the union of the irreducible components of $Z(Q_1,\ldots,Q_{j})$ of dimension $n-j$.
\end{defi} 

We begin with some immediate consequences of these definitions.
  
  \begin{lema}
  \label{location}
  If $W$ is an irreducible component of $\mathcal{E}_V(Q)$ of dimension $i$, then it is also an irreducible component of $\mathcal{S}_V^{(n-i)}(Q)$.
  \end{lema}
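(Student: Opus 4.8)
The plan is to simply unwind the two definitions $\mathcal{E}_V(Q)=\bigcup_{j=1}^{n-d}\mathcal{E}_V^{(j)}(Q)$ and $\mathcal{S}_V^{(j)}(Q)$, using only formal properties of irreducible decompositions. First I would locate $W$ inside a single piece of the envelope: since $W$ is irreducible and lies in the finite union of closed sets $\bigcup_{j}\mathcal{E}_V^{(j)}(Q)$, we have $W\subseteq\mathcal{E}_V^{(j_0)}(Q)$ for some $1\le j_0\le n-d$, and because $W$ is a maximal irreducible closed subset of $\mathcal{E}_V(Q)$ and $\mathcal{E}_V^{(j_0)}(Q)\subseteq\mathcal{E}_V(Q)$, $W$ is in fact an irreducible component of $\mathcal{E}_V^{(j_0)}(Q)$. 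As $\mathcal{E}_V^{(j_0)}(Q)$ is by definition the union of certain irreducible components of $Z(Q_1,\ldots,Q_{j_0})$, which are pairwise incomparable, its irreducible components are exactly those varieties; hence $W$ is an irreducible component of $Z(Q_1,\ldots,Q_{j_0})$ with $\dim W>n-j_0$. Writing $i=\dim W$, this gives $n-i<j_0$, so $Z(Q_1,\ldots,Q_{j_0})\subseteq Z(Q_1,\ldots,Q_{n-i})$ and therefore $W\subseteq Z(Q_1,\ldots,Q_{n-i})$. (Here $n-i$ is a legitimate index: admissibility of $Q$ forces $Z(Q_1)$ to be a genuine hypersurface, so $i\le n-1$ and $n-i\ge 1$, while $n-i<j_0\le n-d$ keeps it $\le n-d$.)

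Next I would upgrade the inclusion $W\subseteq Z(Q_1,\ldots,Q_{n-i})$ to the statement that $W$ is an irreducible component there of the right dimension. Pick an irreducible component $W^{\ast}$ of $Z(Q_1,\ldots,Q_{n-i})$ with $W\subseteq W^{\ast}$, so $\dim W^{\ast}\ge i$. If $\dim W^{\ast}>i=n-(n-i)$, then $W^{\ast}$ is an irreducible component of $Z(Q_1,\ldots,Q_{n-i})$ of dimension strictly greater than $n-(n-i)$, hence by definition $W^{\ast}\subseteq\mathcal{E}_V^{(n-i)}(Q)\subseteq\mathcal{E}_V(Q)$; but then $W\subsetneq W^{\ast}$ would be an irreducible subvariety of $\mathcal{E}_V(Q)$ strictly containing $W$, contradicting that $W$ is an irreducible component of $\mathcal{E}_V(Q)$. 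Therefore $\dim W^{\ast}=i=\dim W$, and since $W\subseteq W^{\ast}$ with both irreducible and of equal dimension, $W=W^{\ast}$. Thus $W$ is an irreducible component of $Z(Q_1,\ldots,Q_{n-i})$ of dimension exactly $n-(n-i)$, which is precisely the requirement for $W$ to be an irreducible component of $\mathcal{S}_V^{(n-i)}(Q)$.

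I do not expect any genuine obstacle: the lemma is a bookkeeping consequence of how the envelopes $\mathcal{E}_V^{(j)}(Q)$ and the sets $\mathcal{S}_V^{(j)}(Q)$ are carved out of the irreducible decompositions of the $Z(Q_1,\ldots,Q_j)$. The only points needing (minimal) care are the standard fact that a component of a finite union of closed sets is a component of one of them, and the verification that the index $n-i$ lands in the admissible range $1\le n-i\le n-d$; both are immediate from the observations above.
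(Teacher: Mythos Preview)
Your proof is correct and follows essentially the same route as the paper's: locate $W$ as an irreducible component of some $Z(Q_1,\ldots,Q_{j_0})$ with $j_0>n-i$, deduce $W\subseteq Z(Q_1,\ldots,Q_{n-i})$, and then rule out any strictly larger component there by observing it would lie in $\mathcal{E}_V^{(n-i)}(Q)\subseteq\mathcal{E}_V(Q)$, contradicting maximality of $W$. Your version is slightly more explicit about the index-range check $1\le n-i\le n-d$, but otherwise the arguments coincide.
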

  
  \begin{proof}
  By construction of $\mathcal{E}_V(Q)$ we know that $W$ must be an irreducible component of $Z(Q_1,\ldots,Q_{j})$ for some $j>n-i$. In particular, $W \subseteq Z(Q_1,\ldots,Q_{n-i})$. But any irreducible component of $Z(Q_1,\ldots,Q_{n-i})$ containing $W$ properly has dimension strictly greater than $i$ and therefore belongs to $\mathcal{E}_V^{(n-i)}(Q) \subseteq \mathcal{E}_V(Q)$, contradicting the fact that $W$ is an irreducible component of this last algebraic set. The result follows.
  \end{proof}
  
  \begin{lema}
  \label{SW}
  Let $W_1,\ldots,W_r$ be the irreducible components of $\mathcal{S}_V^{(i)}(Q)$ for some $1 \le i \le n-d$. Then 
  $$ \sum_{j=1}^r \deg(W_j) \lesssim_{K,n} \delta_1(V) \cdots \delta_i(V).$$
  \end{lema}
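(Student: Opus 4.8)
The plan is to observe that this is essentially a direct consequence of Bezout's inequality applied to the polynomials $Q_1,\ldots,Q_i$, together with the degree bounds built into the notion of $K$-admissibility.

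First I would recall that, by definition, $\mathcal{S}_V^{(i)}(Q)$ is the union of those irreducible components of $Z(Q_1,\ldots,Q_i)$ that have dimension exactly $n-i$; in particular each $W_j$ is an irreducible component of the full zero set $Z(Q_1,\ldots,Q_i)$. Applying Lemma \ref{Bezout} with $W=\C^n$ (which is irreducible of degree $1$), the sum of the degrees of \emph{all} irreducible components of $Z(Q_1,\ldots,Q_i)$ --- and hence in particular $\sum_{j=1}^r \deg(W_j)$, since discarding the components of dimension greater than $n-i$ only decreases the sum --- is at most $\prod_{j=1}^i \deg(Q_j)$.

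Next I would invoke the hypothesis that $Q$ is $K$-admissible for $V$: this gives $\deg(Q_j)\le K_j\delta_j(V)\le K\delta_j(V)$ for every $1\le j\le i$, using $K\ge K_{n-d}\ge K_j$. Substituting, $\sum_{j=1}^r\deg(W_j)\le K^i\prod_{j=1}^i\delta_j(V)\le K^n\prod_{j=1}^i\delta_j(V)$, and since $1\le i\le n-d\le n$ the factor $K^i$ is bounded by a quantity depending only on $K$ and $n$; this is exactly the asserted bound $\lesssim_{K,n}\delta_1(V)\cdots\delta_i(V)$.

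There is no genuine obstacle here: the only points to be careful about are that $\mathcal{S}_V^{(i)}(Q)$ really is a subcollection of the irreducible components of $Z(Q_1,\ldots,Q_i)$ and that the implied constant is permitted to depend on $K$; both are immediate from the definitions. If at some later stage a bound free of the exponential dependence on $K$ were required one could instead peel off the components dimension by dimension, but for the present purposes the crude estimate suffices.
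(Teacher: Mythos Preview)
Your proof is correct and follows essentially the same approach as the paper's own proof: both simply note that each $W_j$ is an irreducible component of $Z(Q_1,\ldots,Q_i)$, apply Bezout's inequality (Lemma~\ref{Bezout}), and use the admissibility bound $\deg(Q_j)\le K\delta_j(V)$.
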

  
  \begin{proof}
  By Lemma \ref{Bezout}, this follows immediately from the fact that each irreducible component of $\mathcal{S}_V^{(i)}(Q)$ is also an irreducible component of $Z(Q_1,\ldots,Q_i)$ and $\deg(Q_j) \le K \delta_j(V)$ for every $1 \le j \le i$.
  \end{proof}
  
  \begin{coro}
  \label{LC}
  Let $W_1,\ldots,W_r$ be the irreducible components of $\mathcal{E}_V(Q)$ of dimension $n-k$. Then 
  $$ \sum_{i=1}^r \deg(W_i) \lesssim_{K,n} \delta_1(V) \cdots \delta_k(V).$$
  \end{coro}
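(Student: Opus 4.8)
The plan is to deduce Corollary \ref{LC} from Lemma \ref{location} and Lemma \ref{SW}. By Lemma \ref{location}, every irreducible component $W_i$ of $\mathcal{E}_V(Q)$ of dimension $n-k$ is also an irreducible component of $\mathcal{S}_V^{(k)}(Q)$. Hence the components $W_1,\ldots,W_r$ form a subset of the irreducible components of $\mathcal{S}_V^{(k)}(Q)$, and in particular $\sum_{i=1}^r \deg(W_i)$ is bounded above by the sum of the degrees of all irreducible components of $\mathcal{S}_V^{(k)}(Q)$. Applying Lemma \ref{SW} with $i=k$, this last sum is $\lesssim_{K,n} \delta_1(V)\cdots\delta_k(V)$, which is exactly the claimed bound.

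The only point requiring a (trivial) check is that $k$ is a legitimate index, i.e. $1 \le k \le n-d$: since $W_i$ is a component of the envelope, by construction it is a component of some $Z(Q_1,\ldots,Q_j)$ with $j > n-\dim(W_i) = k$ and $j \le n-d$, so $1 \le k \le n-d-1$, and Lemma \ref{SW} applies. There is essentially no obstacle here; the corollary is a direct bookkeeping consequence of the two preceding lemmas, and the proof is a one-paragraph combination of them. The only thing to be careful about is not to double-count: distinct $W_i$ are distinct irreducible components of $\mathcal{S}_V^{(k)}(Q)$ (they have the right dimension $n-k$ and are irreducible components of the envelope, hence of $\mathcal{S}_V^{(k)}(Q)$ by Lemma \ref{location}), so their degrees add up without overlap and the inequality is valid.

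\begin{proof}
Let $W$ be any of the varieties $W_1,\ldots,W_r$, so $W$ is an irreducible component of $\mathcal{E}_V(Q)$ of dimension $n-k$. By Lemma \ref{location}, $W$ is an irreducible component of $\mathcal{S}_V^{(k)}(Q)$. Since the $W_i$ are pairwise distinct and each is an irreducible component of the algebraic set $\mathcal{S}_V^{(k)}(Q)$, they constitute a subcollection of the irreducible components of $\mathcal{S}_V^{(k)}(Q)$. Therefore
$$ \sum_{i=1}^r \deg(W_i) \le \sum_{j} \deg(W_j'), $$
where $W_1',\ldots,W_s'$ denotes the full list of irreducible components of $\mathcal{S}_V^{(k)}(Q)$. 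By Lemma \ref{SW} applied with $i=k$, the right-hand side is $\lesssim_{K,n} \delta_1(V) \cdots \delta_k(V)$. Combining these two estimates gives the desired bound.
\end{proof}
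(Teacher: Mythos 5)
Your proof is correct and follows exactly the paper's own argument: apply Lemma \ref{location} to place each $W_i$ among the irreducible components of $\mathcal{S}_V^{(k)}(Q)$, then invoke Lemma \ref{SW} with $i=k$. The extra sanity check that $1\le k\le n-d$ is fine but not strictly needed.
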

  
  \begin{proof}
  This follows from Lemma \ref{location} and Lemma \ref{SW}.
  \end{proof}
  
  The following result shows how we are able to gain control on the irreducible components of an envelope of $V$ at the cost of increasing the allowed upper bound on the degree of the associated admissible tuple.
  
   \begin{prop}
   \label{RC}
  Let $V \subseteq \C^n$ be an irreducible variety of dimension $d$ and let $1=\varepsilon_0 \ge \varepsilon_1 \ge \ldots \ge \varepsilon_{n-d-1} > 0$ be given. Then, there exist constants $C_1,\ldots,C_{n-d}$ with $C_i=O_{n,\varepsilon_{i-1}}(1)$ and a $(C_1,\ldots,C_{n-d})$-admissible tuple of polynomials $Q=\left\{ Q_1,\ldots,Q_{n-d} \right\}$ for $V$, such that, for every $1 \le i < n-d$, the union of all irreducible components of $\mathcal{E}_V(Q)$ of dimension $n-i$ has degree less than $\varepsilon_i \delta_1(V) \cdots \delta_i(V)$.
  \end{prop}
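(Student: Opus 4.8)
The plan is to prove Proposition \ref{RC} by induction on the codimension $n-d$, constructing the tuple $Q$ one polynomial at a time. When $n-d=1$ there is nothing to prove: take $Q_1\in I(V)$ of degree $\delta_1(V)$, the range $1\le i<n-d$ being empty. For the inductive step, first choose $Q_1$ of degree exactly $\delta_1(V)$ as in Lemma \ref{minimalP}; since any proper factor of $Q_1$ lying in $I(V)$ would already have degree $\ge\delta_1(V)$, the polynomial $Q_1$ is irreducible, so $Z(Q_1)$ is an irreducible hypersurface and is an $(n-1)$-minimal variety of $V$. Proceeding recursively, suppose $Q_1,\dots,Q_i$ have been chosen ($i<n-d$) so that $Z(Q_1,\dots,Q_i)$ has pure dimension $n-i$, has an $(n-i)$-minimal variety $V_{(i)}$ of $V$ as its unique component through $V$, and the tuple so far is admissible. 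Then choose $Q_{i+1}\in I(V)$ of degree $\le C_{i+1}\delta_{i+1}(V)$ which cuts $V_{(i)}$ properly (possible since, $V_{(i)}$ being $(n-i)$-minimal, there is a polynomial of $I(V)$ of degree $\delta_{i+1}(V)$ not vanishing on $V_{(i)}$), which keeps an $(n-i-1)$-minimal variety of $V$ in its zero set, and which — by a generic linear combination as in Lemma \ref{simplelinear} — additionally cuts properly every other component $W$ of $Z(Q_1,\dots,Q_i)$ on which some polynomial of $I(V)$ of degree $\le C_{i+1}\delta_{i+1}(V)$ does not vanish.

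The only obstruction to $Z(Q_1,\dots,Q_{i+1})$ being again of pure dimension $n-i-1$ is then the union $Y_i$ of those components $W$ of $Z(Q_1,\dots,Q_i)$ on which \emph{every} element of $I(V)_{\le m}$ vanishes, where $m:=C_{i+1}\delta_{i+1}(V)$. By Lemma \ref{location} and the recursion, these are precisely the $(n-i)$-dimensional components adjoined to $\mathcal{E}_V(Q)$ at level $i+1$, so it suffices to bound $\deg(Y_i)$. Since $I(V)_{\le m}\subseteq I(Y_i)_{\le m}$ one has $H_{I(Y_i)}(m)\le H_{I(V)}(m)$; as $\dim Y_i=n-i>d$, Theorem \ref{8tool} gives $H_{I(Y_i)}(m)\ge c_0\,m^{\,n-i}\deg(Y_i)$ (legitimate once $C_{i+1}$ is chosen large, as each component of $Y_i$ is a component of $Z(Q_1,\dots,Q_i)$ and hence has $\delta$ bounded in terms of the degrees of $Q_1,\dots,Q_i$, so $\lesssim_n\delta_i(V)$ up to the constants already fixed), while the elementary upper bound for Hilbert functions of a $d$-dimensional set gives $H_{I(V)}(m)\lesssim_n m^d\deg(V)$. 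Using Theorem \ref{clases}, so that $\deg(V)\sim_n\delta_1(V)\cdots\delta_{n-d}(V)$, this yields
\[
\deg(Y_i)\;\lesssim_n\;\frac{\deg(V)}{c_0\,m^{\,n-i-d}}\;\sim_n\;\frac{\delta_1(V)\cdots\delta_{n-d}(V)}{c_0\,\bigl(C_{i+1}\delta_{i+1}(V)\bigr)^{\,n-i-d}}.
\]

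In particular, for the top level $i=n-d-1$ the exponent $n-i-d$ equals $1$ and the bound becomes $\sim_n\delta_1(V)\cdots\delta_{n-d-1}(V)/(c_0C_{n-d})$, which is $<\varepsilon_{n-d-1}\,\delta_1(V)\cdots\delta_{n-d-1}(V)$ as soon as $C_{n-d}=O_{n,\varepsilon_{n-d-1}}(1)$ is taken large; more generally, whenever the partial degrees $\delta_{i+1}(V),\dots,\delta_{n-d}(V)$ are mutually comparable the same choice $C_{i+1}=O_{n,\varepsilon_i}(1)$ forces $\deg(Y_i)<\varepsilon_i\,\delta_1(V)\cdots\delta_i(V)$, and the constants then satisfy $C_i=O_{n,\varepsilon_{i-1}}(1)$.

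The step I expect to be the main obstacle is making the estimate on $\deg(Y_i)$ sufficient at the intermediate levels $1\le i\le n-d-2$: the crude bound just displayed is not by itself small enough when some later partial degree $\delta_j(V)$, $j>i+1$, is much larger than $\delta_{i+1}(V)$, since admissibility only permits $\deg Q_{i+1}\lesssim_{n,\varepsilon_i}\delta_{i+1}(V)$, so the cutoff $m$ cannot be raised at will. Closing this gap seems to require exploiting the partial-degree structure of Section \ref{S5} to reduce to the case of comparable consecutive partial degrees, and feeding the minimal varieties $V_{(i)}$ back into the induction — their partial degrees being comparable to those of $V$ by Theorem \ref{clases} — so that the components of $Y_i$ at each level are themselves organised by the inductive hypothesis rather than left to the crude bound. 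Threading the constants $C_i=O_{n,\varepsilon_{i-1}}(1)$ and the descending parameters $\varepsilon_i$ through this recursion so that all the bounds stay mutually consistent is the delicate point.
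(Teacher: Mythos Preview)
Your proposal has a genuine gap, which you yourself identify. The Hilbert-function comparison $H_{I(Y_i)}(m)\le H_{I(V)}(m)$ only yields
\[
\deg(Y_i)\;\lesssim_n\;\frac{\delta_1(V)\cdots\delta_{n-d}(V)}{(C_{i+1}\delta_{i+1}(V))^{\,n-i-d}},
\]
and when some later partial degree $\delta_j(V)$, $j>i+1$, is much larger than $\delta_{i+1}(V)$, no admissible choice of $C_{i+1}$ can force this below $\varepsilon_i\,\delta_1(V)\cdots\delta_i(V)$. The repair you sketch does not close this: feeding the minimal varieties $V_{(i)}$ back into the recursion does not change the fundamental obstruction, which is that admissibility caps $\deg Q_{i+1}$ at $O_{n,\varepsilon_i}(\delta_{i+1}(V))$, so the cutoff $m$ in the Hilbert comparison simply cannot reach the scale of the later $\delta_j(V)$. (There is also a smaller wrinkle: your recursion hypothesis that $Z(Q_1,\dots,Q_i)$ be of pure dimension $n-i$ is broken as soon as some $Y_{i'}\neq\emptyset$ for $i'<i$, so the induction as written does not continue; but this is repairable bookkeeping, whereas the degree bound is not.)

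The paper's proof bypasses the problem entirely by using Theorem~\ref{2} rather than a Hilbert-function argument. Instead of bounding the uncut locus a posteriori, one \emph{constructs} $Q_i$ so as to cut every union $W_J$ of components of $\mathcal{S}_V^{(i-1)}(Q)$ with $\deg(W_J)\ge\varepsilon_{i-1}\,\delta_1(V)\cdots\delta_{i-1}(V)$. For each such $W_J$, Theorem~\ref{2} supplies a polynomial $f_J$ vanishing on the set $A$ of $(n-i)$-dimensional components through $V$ but not on $W_J$, of degree $\lesssim_{n,\varepsilon_{i-2}}\deg(A)/\deg(W_J)$. Since $\deg(A)\lesssim_{n,\varepsilon_{i-2}}\delta_1(V)\cdots\delta_i(V)$ by Bezout and $\deg(W_J)\ge\varepsilon_{i-1}\,\delta_1(V)\cdots\delta_{i-1}(V)$ by hypothesis, this gives $\deg(f_J)\lesssim_{n,\varepsilon_{i-1}}\delta_i(V)$, exactly the admissibility constraint for $Q_i$. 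A generic linear combination over all such $J$ (Lemma~\ref{simplelinear}) then yields $Q_i$. The crucial point is that the degree bound from Theorem~\ref{2} involves only the ratio $\deg(A)/\deg(W_J)$, so the later partial degrees $\delta_{i+1}(V),\dots,\delta_{n-d}(V)$ never enter the estimate at level $i$; this is precisely what your Hilbert-function route cannot achieve.
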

  
  \begin{proof}
  Let $P_1,\ldots,P_{n-d}$ be polynomials of the form given by Lemma \ref{minimalP}. We know that for every $1 \le i \le n-d$ there is an irreducible component of $Z(P_1,\ldots,P_i)$ of dimension $n-i$ containing $V$ that is an $(n-i)$-minimal variety of $V$. Using Lemma \ref{location} we see that it suffices to recursively construct the elements of a tuple of polynomials $Q=\left\{ Q_1,\ldots,Q_{n-d} \right\}$ such that, for every $1 \le i \le n-d$, it is $\deg(Q_i) \le C_i \delta_i(V)$, the maximal dimension of an irreducible component of $Z(Q_1,\ldots,Q_i)$ containing $V$ is equal to $n-i$, $Z(Q_1,\ldots,Q_i)$ contains every $(n-i)$-dimensional component of $Z(P_1,\ldots,P_i)$ containing $V$ and $Q_i$ does not vanish identically on any union of components of $\mathcal{S}_V^{(i-1)}(Q)$ having degree at least $\varepsilon_{i-1} \delta_1(V) \cdots \delta_{i-1}(V)$. Notice that this last requirement is well-defined for recursive purposes since $\mathcal{S}_V^{(i-1)}(Q)$ depends only on the first $i-1$ elements of the tuple $Q$.
  
  We set $Q_1=P_1$, which clearly satisfies the required conditions. Recursively, suppose we have constructed elements $Q_1,\ldots,Q_{i-1}$ with the properties specified in the previous paragraph. By Lemma \ref{simplelinear} we know there is some real linear combination $f$ of the polynomials $P_1,\ldots,P_i$ such that the maximal dimension of an irreducible component of $Z(Q_1,\ldots,Q_{i-1},f)$ containing $V$ is equal to $n-i$ and such that $Z(Q_1,\ldots,Q_{i-1},f)$ contains every $(n-i)$-dimensional irreducible component of $Z(P_1,\ldots,P_i)$ containing $V$.
   
 Let $A$ be the algebraic set of all irreducible components of $Z(Q_1,\ldots,Q_{i-1},f)$ of dimension $n-i$ containing $V$. Notice that by Lemma \ref{Bezout} we know that $\deg(A) \le (\prod_{j=1}^{i-1} C_j)\delta_1(V) \cdots \delta_i(V)$ while Corollary \ref{SB} implies that $\deg(A) \gtrsim_n \delta_1(V) \cdots \delta_i(V)$. Let $W_1,\ldots,W_r$ be the set of irreducible components of $\mathcal{S}_V^{(i-1)}(Q)$. Consider an arbitrary subset $J \subseteq \left\{ 1 ,\ldots, r \right\}$ and let
 $$ W_J = \bigcup_{j \in J} W_j.$$
 Assume $\deg(W_J) \ge \varepsilon_{i-1} \delta_1(V) \cdots \delta_{i-1}(V)$. Notice that since each $W_j$ is an irreducible component of $Z(Q_1,\ldots,Q_{i-1})$ we have in particular that $\delta(W_J) \lesssim_{n,\varepsilon_{i-2}} \delta_{i-1}(V)$. Furthermore, since 
 $$\deg(W_J) \le \prod_{j=1}^{i-1} \deg(Q_j) \lesssim_{n,\varepsilon_{i-2}} \prod_{j=1}^{i-1} \delta_j(V),$$
 we conclude that 
 $$\deg(A) \gtrsim_{n,\varepsilon_{i-2}} \deg(W_J) \delta_i(V) \gtrsim_{n,\varepsilon_{i-2}} \deg(W_J) \delta(W_J).$$ 
  We can therefore apply Theorem \ref{2} to find a polynomial $f_J$ of degree 
   $$ \lesssim_{n, \varepsilon_{i-2}} \frac{\deg(A)}{\deg(W_J)} \lesssim_{n,\varepsilon_{i-1}} \delta_i(V),$$
   vanishing on $A$ without vanishing identically on $W_J$. By Lemma \ref{simplelinear} we can then find a real linear combination $Q_i$ of the polynomials $f_J$ over all $J$ with $\deg(W_J) \ge \varepsilon_{i-1} \delta_1(V) \cdots \delta_{i-1}(V)$ such that $Q_i$ vanishes on $A$ without vanishing identically on any such $W_J$. The result follows.
  \end{proof}
  
 We now deduce from Proposition \ref{RC} that we can find an envelope of $V$ whose irreducible components lie on the zero set of some appropriate polynomials of small degree.
  
  \begin{lema}
  \label{env2}
Let $V \subseteq \C^n$ be an irreducible variety of dimension $d$. Then there exists an $O_{n}(1)$-admissible tuple $Q=\left\{Q_1,\ldots,Q_{n-d} \right\}$ of polynomials for $V$ and polynomials $F_1,\ldots,F_{n-d-1}$ with $\deg(F_k) < \delta_k(V)$ for every $1 \le k \le n-d-1$, such that $F_k$ vanishes on all the irreducible components of $\mathcal{E}_V(Q)$ of dimension $n-k$ without vanishing identically on $V$.
\end{lema}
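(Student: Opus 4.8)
The plan is to extract the tuple $Q$ from Proposition \ref{RC} and then construct each $F_k$ by a dimension count in the spirit of Section \ref{S4}, the key input being the control that proposition gives on the degree of the envelope.

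First I would fix small positive constants $\varepsilon_1\ge\cdots\ge\varepsilon_{n-d-1}$ depending only on $n$, to be pinned down at the end, and apply Proposition \ref{RC}: this produces constants $C_i=O_n(1)$ and a $(C_1,\dots,C_{n-d})$-admissible, hence $O_n(1)$-admissible, tuple $Q=\{Q_1,\dots,Q_{n-d}\}$ for $V$ such that, writing $E_k$ for the union of the irreducible components of $\mathcal E_V(Q)$ of dimension $n-k$, one has $\deg(E_k)<\varepsilon_k\,\delta_1(V)\cdots\delta_k(V)\le\varepsilon_k\,\delta_k(V)^k$ for each $1\le k<n-d$ (the last step using Lemma \ref{dorder}). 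I would also note that $V\not\subseteq E_k$: by Lemma \ref{location} every component $W$ of $E_k$ is a component of $Z(Q_1,\dots,Q_j)$ of dimension $n-k>n-j$ for some $j>k$, whereas admissibility of $Q$ forces every component of $Z(Q_1,\dots,Q_j)$ containing $V$ to have dimension exactly $n-j$.

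Fix $k$. Since $E_k$ is equidimensional of dimension $n-k$, the standard upper bound $H_{I(E_k)}(m)\lesssim_n m^{\,n-k}\deg(E_k)+1$ for its affine Hilbert function, together with $\dim\C[x_1,\dots,x_n]_{\le m}\sim_n m^{\,n}$, shows that for $m$ a large enough constant multiple of $\deg(E_k)^{1/k}$ the space $I(E_k)_{\le m}$ of polynomials of degree $\le m$ vanishing on $E_k$ has dimension $\gtrsim_n m^{\,n}$; in particular it is non-zero. By the bound on $\deg(E_k)$ this value of $m$ is at most $C'_n\varepsilon_k^{1/k}\delta_k(V)$, so shrinking $\varepsilon_k$ (depending on $n$) makes $m<\delta_k(V)$; if $E_k=\emptyset$ we simply take $F_k$ to be a non-zero constant. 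It remains to choose $F_k\in I(E_k)_{\le m}$ with $F_k\notin I(V)$.

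This last point is the crux. Since $m<\delta_k(V)\le\delta_{k+1}(V)$, the $(n-k)$-minimality of a variety $V'$ of $V$ contained in $Z(Q_1,\dots,Q_k)$ (available from admissibility of $Q$) shows that any polynomial of degree $\le m$ vanishing on $V$ already vanishes on $V'$; hence it suffices to find an element of $I(E_k)_{\le m}$ not vanishing on $V'$. But $V'$ is an irreducible $(n-k)$-dimensional variety containing $V$, so $\deg(V')\gtrsim_n\delta_1(V)\cdots\delta_k(V)$ by Corollary \ref{SB}, which dwarfs $\deg(E_k)$, and $\delta_j(V')\sim_n\delta_j(V)$ for $j\le k$ by Theorem \ref{clases}. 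If every polynomial of $I(E_k)_{\le m}$ vanished on $V'$ then $I(E_k)_{\le m}\subseteq I(V')_{\le m}$, and comparing the lower bound $\dim I(E_k)_{\le m}\gtrsim_n m^{\,n}$ with the Hilbert-function estimates for $E_k\cup V'$ (whose degree is dominated by $\deg(V')$) yields a contradiction once $\varepsilon_k$ is small enough; the transparent instance, which already exhibits the mechanism, is that when $\delta_1(E_k)<\delta_1(V)$ any non-zero polynomial of $I(E_k)$ of degree $<\delta_1(V)\le\delta_k(V)$ is automatically off $V$, the general case being reduced to this by working level by level through the distinct values among $\delta_1(V)\le\cdots\le\delta_k(V)$ and their minimal varieties. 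Taking such an $F_k$, its degree is $\le m<\delta_k(V)$, completing the proof. The step I expect to be the main obstacle is exactly this last one: forcing $\deg(F_k)<\delta_k(V)$ while keeping $F_k$ off $V$ is a genuine tension, and it is precisely the negligibility of the envelope coming from Proposition \ref{RC} and the sharp converse of Bezout's inequality (Theorem \ref{clases}, Corollary \ref{SB}) that reconcile the two.
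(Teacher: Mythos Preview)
Your overall plan coincides with the paper's: obtain $Q$ from Proposition~\ref{RC} so that the $(n-k)$-dimensional part $E_k$ of $\mathcal E_V(Q)$ has degree $<\varepsilon_k\,\delta_1(V)\cdots\delta_k(V)$, and then build each $F_k$. The divergence, and the gap, is in how you produce $F_k$.

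Your direct Hilbert-function comparison does not close. Concretely, assuming $I(E_k)_{\le m}\subseteq I(V')_{\le m}$ gives $H_{I(E_k)}(m)\ge H_{I(V')}(m)$, and to contradict this you need a \emph{lower} bound on $H_{I(V')}(m)$. The only such bound in the paper is Theorem~\ref{8tool}, which requires $m\ge 2k\,\delta(V')$; but $\delta(V')=\delta_k(V')\sim_n\delta_k(V)$ (via Lemma~\ref{lower} and Corollary~\ref{SB}) while you have deliberately taken $m<\delta_k(V)$, so that theorem does not apply. Your ``level by level through the $\delta_j(V)$'' remark is exactly the mechanism encoded in Lemma~\ref{hk}/Theorem~\ref{GSiegel}, so you are pointing at the right tool without actually using it. There is also a dimension mismatch: you work with the $(n-k)$-minimal variety $V'$, but $\dim V'=n-k=\dim E_k$, so Theorem~\ref{GSiegel} (which requires $l<d$) cannot be applied to the pair $(E_k,V')$ even if you wanted to.

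The paper's fix is to use instead an $(n-k+1)$-minimal variety $V_{n-k+1}\subseteq Z(Q_1,\dots,Q_{k-1})$. Then $\dim E_k=n-k<n-k+1=\dim V_{n-k+1}$, and Theorem~\ref{GSiegel} (together with Lemma~\ref{coverN}) directly yields a polynomial $F_k$ with
\[
\deg(F_k)\ \lesssim_n\ \Bigl(\tfrac{\deg(E_k)}{\Delta_s(V_{n-k+1})}\Bigr)^{1/(k-s)}\ \lesssim_{n,\varepsilon_{k-1}}\ \varepsilon_k^{1/(k-s)}\delta_k(V)\ <\ \delta_k(V),
\]
vanishing on $E_k$ but not on $V_{n-k+1}$. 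Since $V_{n-k+1}$ is $(n-k+1)$-minimal, every polynomial of $I(V)\setminus I(V_{n-k+1})$ has degree $\ge\delta_k(V)$, so $\deg(F_k)<\delta_k(V)$ and $F_k\notin I(V_{n-k+1})$ force $F_k\notin I(V)$. This single application of Theorem~\ref{GSiegel} replaces the dimension-count you attempt and is precisely what makes the ``small $m$'' regime tractable.
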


\begin{proof}
Let $\varepsilon_1 > \ldots > \varepsilon_{n-d-1}>0$ be real numbers such that each $\varepsilon_i$ is sufficiently small with respect to $n$ and $\varepsilon_{i-1}$. Let $Q=\left\{ Q_1, \ldots, Q_{n-d} \right\}$ be the tuple given by Proposition \ref{RC} with respect to these parameters. Let us write $A_k$ for the union of the irreducible components of $\mathcal{E}_V(Q)$ of dimension $n-k$. By Proposition \ref{RC} we know that $\deg(A_k) \le \varepsilon_k \delta_1(V) \cdots \delta_k(V)$. Let now $V_{n-k+1}$ be an $(n-k+1)$-minimal variety of $V$ contained inside of $Z(Q_1,\ldots,Q_{k-1})$, which we know to exist by definition of an admissible tuple. By Theorem \ref{GSiegel} (and Lemma \ref{coverN}) we can find a polynomial $F_k$ of degree
$$ \lesssim_n \left( \frac{\deg(A_k)}{\Delta_s(V_{n-k+1})} \right)^{\frac{1}{k-s}},$$
vanishing on $A_k$ without vanishing identically on $V_{n-k+1}$, for some $0 \le s< k$. Since $\Delta_s(V_{n-k+1}) \sim_{n,\varepsilon_{k-1}} \delta_1(V) \cdots \delta_s(V)$ by Corollary \ref{SB} and Lemma \ref{lower}, it follows that the above quantity is 
$$\lesssim_{n,\varepsilon_{k-1}} \varepsilon_k^{1/(k-s)} \delta_{k}(V).$$
This is strictly less than $\delta_{k}(V)$ provided $\varepsilon_k>0$ was chosen sufficiently small with respect to $n$ and $\varepsilon_{k-1}$. Since $V_{n-k+1}$ is an $(n-k+1)$-minimal variety, this means that $F_k$ cannot vanish identically on $V$ and the result follows.
\end{proof}
  
  \subsection{Full covers}
  
  We have seen how we can obtain some control on the irreducible components of an envelope of $V$. In order to take advantage of this, we will need to find an admissible tuple of polynomials for each of these irreducible components and, once again, control the higher-dimensional components that they produce. In order to handle this recursive procedure, we introduce the following definition. 
  
  \begin{defi}[Full cover]
  Let $V \subseteq \C^n$ be an irreducible variety and $K>0$. If $\dim (V) = n-1$, we say an algebraic set $\mathcal{F}(V)$ is a $K$-full cover of $V$ if $\mathcal{F}(V)=V$. Recursively, let $\dim(V)=d<n-1$. We say an algebraic set $\mathcal{F}(V)$ is a $K$-full cover of $V$ if there exists a $K$-admissible tuple of polynomials $Q$ for $V$ such that 
  $$ \mathcal{F}(V) = \mathcal{S}_V^{(n-d)}(Q) \cup \bigcup_{W_i \subseteq \mathcal{E}_V(Q)} \mathcal{F}(W_i),$$
  where the last union runs through all the irreducible components $W_i$ of $\mathcal{E}_V(Q)$ and each $\mathcal{F}(W_i)$ is a $K$-full cover of $W_i$.
  \end{defi}
  
  The next lemma shows an appropriate bound for the degree of a full cover.
  
  \begin{lema}
  \label{FD}
  Let $\mathcal{F}(V)$ be a $K$-full cover of $V$ and let $W_1,\ldots,W_r$ be the irreducible components of $\mathcal{F}(V)$. Then
  $$ \sum_{i=1}^r \deg(W_i) \lesssim_{K,n} \deg(V).$$
  \end{lema}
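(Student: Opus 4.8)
The plan is to induct on the codimension $c=n-\dim(V)$, tracking an explicit constant $C_c=O_{K,n}(1)$ at each stage. The base case $c=1$ is immediate: by definition $\mathcal{F}(V)=V$, so $\sum_{i=1}^r\deg(W_i)=\deg(V)$. For the inductive step, suppose $c\ge 2$ and that the bound holds, with constant $C_{c-1}$, in codimension $<c$. Writing $d=\dim(V)$ and letting $Q=\{Q_1,\ldots,Q_{n-d}\}$ be the $K$-admissible tuple for $V$ appearing in the definition of the given full cover, we have $\mathcal{F}(V)=\mathcal{S}_V^{(n-d)}(Q)\cup\bigcup_{W\subseteq\mathcal{E}_V(Q)}\mathcal{F}(W)$, with $W$ running over the irreducible components of $\mathcal{E}_V(Q)$. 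Since the irreducible components of a finite union of algebraic sets occur among the irreducible components of the sets being joined, it suffices to bound $\deg\big(\mathcal{S}_V^{(n-d)}(Q)\big)+\sum_W\deg(\mathcal{F}(W))$.

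For the first summand, Lemma \ref{SW} with $i=n-d$ gives $\deg\big(\mathcal{S}_V^{(n-d)}(Q)\big)\lesssim_{K,n}\delta_1(V)\cdots\delta_{n-d}(V)$, and Theorem \ref{clases} identifies $\delta_1(V)\cdots\delta_{n-d}(V)$ with $\deg(V)$ up to a factor $O_n(1)$ (since $V$ is itself an irreducible component of dimension $d$ of the relevant $Z(P_1,\ldots,P_{n-d})$); hence this summand is $\lesssim_{K,n}\deg(V)$. For the second summand, every irreducible component $W$ of $\mathcal{E}_V(Q)$ has $\dim(W)>d$, hence codimension strictly smaller than $c$, so the induction hypothesis gives $\deg(\mathcal{F}(W))\le C_{c-1}\deg(W)$. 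Grouping these components by their dimension, which is of the form $n-k$ with $1\le k\le n-d-1$, and applying Corollary \ref{LC} to each group, we get $\sum_W\deg(W)\lesssim_{K,n}\sum_{k=1}^{n-d-1}\delta_1(V)\cdots\delta_k(V)$. Since $\delta_1(V)\ge 1$ (as $V$ is a proper subvariety), hence $\delta_j(V)\ge 1$ for all $j$ by Lemma \ref{dorder}, each term is at most $\delta_1(V)\cdots\delta_{n-d}(V)\sim_n\deg(V)$; as the sum has fewer than $n$ terms, $\sum_W\deg(\mathcal{F}(W))\le C_{c-1}\cdot O_{K,n}(1)\cdot\deg(V)$.

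Combining the two bounds gives $\sum_{i=1}^r\deg(W_i)\le C_c\deg(V)$ with $C_c\le A(K,n)+B(K,n)\,C_{c-1}$ for constants $A,B$ depending only on $K$ and $n$; since $c\le n$, unwinding this recurrence from $C_1=1$ leaves a constant that is still $O_{K,n}(1)$. The one point that deserves genuine care is precisely this constant bookkeeping: one must check that the constant depends on neither $V$ nor the depth to which the recursive definition of the full cover is unfolded. Since that depth is bounded by $n$, this presents no real obstacle, and the rest of the argument is a direct assembly of Lemma \ref{SW}, Corollary \ref{LC} and Theorem \ref{clases}.
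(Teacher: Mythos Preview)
Your argument is correct and follows the same route as the paper's own proof: induction on codimension, handling $\mathcal{S}_V^{(n-d)}(Q)$ via Lemma~\ref{SW} and the envelope contribution via the inductive hypothesis together with Corollary~\ref{LC}. The only point you make more explicit than the paper is the conversion $\delta_1(V)\cdots\delta_{n-d}(V)\sim_n\deg(V)$ via Theorem~\ref{clases} (the paper leaves this implicit), and your careful tracking of the recursion depth for the constants; both are fine and do not change the strategy.
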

  
  \begin{proof}
  We proceed by induction on the codimension, the claim being trivial for hypersurfaces. So let $\dim(V) \le n-2$ and assume the result holds for all varieties of strictly larger dimension. The irreducible components of $\mathcal{F}(V)$ that are also irreducible components of $\mathcal{S}_V^{(n-d)}(Q)$ satisfy the bound by Lemma \ref{SW}. Every other irreducible component is an irreducible component of a $K$-full cover $\mathcal{F}(W)$ of some irreducible component $W$ of $\mathcal{E}_V(Q)$, so in particular $\dim(W)>\dim(V)$. For a fixed choice of $W$, we know by induction that the degrees of the irreducible components of $\mathcal{F}(W)$ sum up to at most $\lesssim_{K,n} \deg(W)$. The result then follows, by Corollary \ref{LC}, upon summing over all irreducible components $W$ of $\mathcal{E}_V(Q)$.
  \end{proof}
  
  We now show how the bounds we have attained on the envelopes allow us to obtain an appropriate full cover.
  
  \begin{prop}
  \label{EXI}
  Let $V \subseteq \C^n$ be an irreducible variety of dimension $d$ and let $\varepsilon > 0$ be given. Then $V$ admits an $O_{\varepsilon,n}(1)$-full cover $\mathcal{F}(V)$ such that, for every $1 \le k < n-d$, every irreducible component of $\mathcal{F}(V)$ of dimension $n-k$ has degree at most $\varepsilon \delta_1(V) \cdots \delta_{k}(V)$.
  \end{prop}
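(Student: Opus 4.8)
The plan is to prove Proposition \ref{EXI} by induction on the codimension $n-d$, combining Lemma \ref{env2} with the recursive structure of full covers. The base case is $\dim(V) = n-1$, where $\mathcal{F}(V)=V$ and there is nothing to check since there are no indices $k$ with $1 \le k < n-d = 1$. For the inductive step, assume the result holds for all irreducible varieties of dimension strictly larger than $d$, and let $V$ have dimension $d \le n-2$.

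First I would invoke Lemma \ref{env2} to obtain an $O_n(1)$-admissible tuple $Q=\{Q_1,\ldots,Q_{n-d}\}$ for $V$ together with polynomials $F_1,\ldots,F_{n-d-1}$, where $\deg(F_k) < \delta_k(V)$ and $F_k$ vanishes on every irreducible component of $\mathcal{E}_V(Q)$ of dimension $n-k$ without vanishing on $V$. The point of these $F_k$ is degree control: if $W$ is an irreducible component of $\mathcal{E}_V(Q)$ of dimension $n-k$, then $W \subseteq Z(F_k)$, and more importantly, for the full cover $\mathcal{F}(W)$ we build recursively, every irreducible component is contained in $W$, hence in $Z(F_k)$. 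So any component of $\mathcal{F}(W)$ of dimension $n-j$ (for $j > k$, since $\dim(W) = n-k$) already sits inside a hypersurface $Z(F_k)$ of degree $< \delta_k(V)$, which by Bezout (Lemma \ref{Bezout}) combined with Corollary \ref{LC} or Lemma \ref{FD} forces its degree to be small relative to the product $\delta_1(V)\cdots\delta_j(V)$. The second step is then to apply the induction hypothesis to each irreducible component $W$ of $\mathcal{E}_V(Q)$: with a parameter $\varepsilon'$ to be chosen small in terms of $\varepsilon$ and $n$, we get an $O_{\varepsilon',n}(1)$-full cover $\mathcal{F}(W)$ whose components of dimension $n-j$ (relative to the codimension of $W$, which is $k<n-d$) have degree $\le \varepsilon' \delta_1(W)\cdots\delta_j(W)$. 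By Lemma \ref{lower} (applied using that $\deg(W) \gtrsim_n \delta_1(V)\cdots\delta_k(V)$ by Corollary \ref{SB}, or more carefully using that $W$ is a component of $\mathcal{E}_V(Q)$ so its degree could a priori be small — this is where the content of Lemma \ref{env2} versus Proposition \ref{RC} matters), we have $\delta_i(W) \lesssim_n \delta_i(V)$, so these degrees are $\lesssim_{\varepsilon',n} \delta_1(V)\cdots\delta_j(V)$, and choosing $\varepsilon'$ small enough makes them $\le \varepsilon \delta_1(V)\cdots\delta_j(V)$.

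The third step is to assemble $\mathcal{F}(V) = \mathcal{S}_V^{(n-d)}(Q) \cup \bigcup_{W \subseteq \mathcal{E}_V(Q)} \mathcal{F}(W)$ and verify the degree bound for each dimension $n-k$ with $1 \le k < n-d$. The components of $\mathcal{S}_V^{(n-d)}(Q)$ all have dimension exactly $d = n-(n-d)$, so they contribute nothing in the range $k < n-d$. An irreducible component of $\mathcal{F}(V)$ of dimension $n-k$ in that range therefore comes from some $\mathcal{F}(W)$ with $W$ a component of $\mathcal{E}_V(Q)$; say $\dim(W) = n-k_0$ with $k_0 \le k$. If $k_0 = k$ then $W$ itself is the component and $\deg(W) \le \varepsilon \delta_1(V)\cdots\delta_k(V)$ follows from Lemma \ref{env2}: indeed $W \subseteq Z(F_k)$ with $\deg F_k < \delta_k(V)$, and summing over all such $W$ via Corollary \ref{LC} gives the total degree of dimension-$(n-k)$ components of $\mathcal{E}_V(Q)$ bounded by $\varepsilon \delta_1(V)\cdots\delta_k(V)$ — but one must check that the $\varepsilon$ in Lemma \ref{env2} can be propagated, which it can since Lemma \ref{env2}'s proof allows the $F_k$ to be taken of degree an arbitrarily small constant times $\delta_k(V)$, and then Bezout gives the total degree bound. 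If $k_0 < k$, then the component lies in $\mathcal{F}(W)$ and is contained in $Z(F_{k_0})$, so by the induction hypothesis applied to $W$ its degree is $\le \varepsilon' \delta_1(W)\cdots\delta_k(W) \lesssim_{\varepsilon',n} \delta_1(V)\cdots\delta_k(V)$, which is $\le \varepsilon\delta_1(V)\cdots\delta_k(V)$ for $\varepsilon'$ small. Finally, $\mathcal{F}(V)$ is an $O_{\varepsilon,n}(1)$-full cover because $Q$ is $O_n(1)$-admissible and each $\mathcal{F}(W)$ is $O_{\varepsilon',n}(1)$-admissible-based, and $\varepsilon'$ depends only on $\varepsilon$ and $n$.

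The main obstacle I anticipate is bookkeeping the constants and the ranges of $k$ correctly across the recursion: a component of $\mathcal{F}(V)$ of dimension $n-k$ can be born at many different levels of the recursion, and I must make sure that at whichever level it appears, it is trapped inside one of the low-degree hypersurfaces $Z(F_{k_0})$ with $k_0$ strictly smaller than its own codimension (so that the degree bound genuinely improves), and that the $\varepsilon$-parameters chosen at deeper levels of the recursion can be fixed in advance depending only on $n$ and the top-level $\varepsilon$ — this is fine because the recursion depth is at most $n$, so we pick a finite decreasing sequence $\varepsilon = \varepsilon^{(0)} > \varepsilon^{(1)} > \cdots$ with each sufficiently small relative to the previous and $n$. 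The other delicate point is ensuring the degree bound for components of $\mathcal{E}_V(Q)$ of dimension $n-k$ themselves (the case $k_0=k$ above): this is exactly what Lemma \ref{env2} delivers via the existence of $F_k$ with $\deg(F_k)$ as small a constant multiple of $\delta_k(V)$ as we wish, after which Lemma \ref{Bezout} applied to $Z(Q_1,\ldots,Q_{k-1}) \cap Z(F_k)$ — which contains all these components — gives the total degree $\lesssim_n (\delta_1(V)\cdots\delta_{k-1}(V)) \cdot \deg(F_k)$, as required.
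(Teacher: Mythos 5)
Your overall strategy — inducting on codimension, using the recursive structure of full covers, and propagating a shrinking chain of $\varepsilon$-parameters so that the recursion depth (at most $n$) is under control — matches the paper's proof. But the crucial step, bounding the total degree of the $(n-k)$-dimensional components of $\mathcal{E}_V(Q)$ by $\varepsilon\,\delta_1(V)\cdots\delta_k(V)$, is where your proof has a genuine gap, and the tool you reach for is not the right one.

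You try to derive this bound from Lemma \ref{env2} together with a Bezout argument: since $F_k$ vanishes on all these components and $\deg(F_k)$ can be forced below an arbitrarily small constant times $\delta_k(V)$, you want to apply Lemma \ref{Bezout} to $Z(Q_1,\ldots,Q_{k-1}) \cap Z(F_k)$. The problem is that Bezout only bounds the degrees of the \emph{irreducible components} of that intersection, and there is no reason a given $(n-k)$-dimensional component $W$ of $\mathcal{E}_V(Q)$ is an irreducible component of $Z(Q_1,\ldots,Q_{k-1}) \cap Z(F_k)$. Concretely, $W$ sits inside some irreducible component $Z'$ of $Z(Q_1,\ldots,Q_{k-1})$ of dimension $\ge n-k+1$; nothing in Lemma \ref{env2} forbids $F_k$ from vanishing identically on $Z'$ (the lemma only guarantees $F_k \notin I(V)$, and $Z'$ need not contain $V$). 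In that case $W \subsetneq Z' \subseteq Z(Q_1,\ldots,Q_{k-1}) \cap Z(F_k)$, so $W$ is swallowed by a higher-dimensional component and Bezout says nothing about $\deg(W)$. (Intersecting with $Z(Q_k)$ as well would make $W$ a genuine component, but then Bezout produces the extra factor $\delta_k(V)$, giving $\lesssim \delta_1\cdots\delta_{k-1}\delta_k^2$ rather than $\varepsilon\,\delta_1\cdots\delta_k$.) The way to close this gap is to cite Proposition \ref{RC} directly: it constructs the admissible tuple $Q$ so that the union of the $(n-i)$-dimensional components of $\mathcal{E}_V(Q)$ has degree $< \varepsilon_i\,\delta_1(V)\cdots\delta_i(V)$ by fiat. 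This is exactly the quantity you need, and it is what the paper uses; the polynomials $F_k$ of Lemma \ref{env2} play no role in the proof of Proposition \ref{EXI} and are only needed later in the proof of Theorem \ref{5}.

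A secondary, more minor issue: to get $\delta_i(W) \lesssim_n \delta_i(V)$ for a component $W$ of $\mathcal{E}_V(Q)$ you reach for Corollary \ref{SB} (which requires $W \supseteq V$, not generally true here) and Lemma \ref{lower} (which requires a lower bound on $\deg(W)$ that precisely may fail for envelope components — you flag this yourself but don't resolve it). Neither is needed: the upper bound $\delta_i(W) \lesssim_n \delta_i(V)$ follows immediately from Lemma \ref{location}, since $W$ is an irreducible component of $Z(Q_1,\ldots,Q_{n-\dim W})$ and the $Q_j$ have degree $O_n(\delta_j(V))$, so these same polynomials witness the definition of $\delta_i(W)$.

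Finally, your treatment of the dimension-$\dim(W)$ components of $\mathcal{F}(W)$ (the ones coming from $\mathcal{S}_W^{(n-\dim W)}$ in the recursive full cover) is elided: Lemma \ref{SW} gives their total degree $\lesssim_{n,\varepsilon_W} \deg(W)$, and one then needs the bound $\deg(W) \lesssim \varepsilon_{n-\dim W}\,\delta_1(V)\cdots\delta_{n-\dim W}(V)$ from Proposition \ref{RC} to conclude — again, the same missing ingredient.
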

 
 \begin{proof}
  We proceed by induction on the codimension, the result being clear when $d=n-1$. Let $\varepsilon=\varepsilon_0 > \varepsilon_1 > \cdots > \varepsilon_{n-d-1}> 0$ be chosen so that $\varepsilon_i$ is sufficiently small with respect to $n$ and $\varepsilon_{i-1}$. Let $Q$ be a $(C_1,\ldots,C_{n-d})$-admissible tuple of polynomials for $V$ of the form given by Proposition \ref{RC} with respect to the parameters $\varepsilon_i$ we have just defined. Since every irreducible component of $\mathcal{S}_V^{(n-d)}(Q)$ has dimension $d$, it suffices to show that for every irreducible component $W$ of $\mathcal{E}_V(Q)$ we can find a full cover $\mathcal{F}(W)$ with all its irreducible components of dimension $n-k \ge \dim(W)$ having degree at most $\varepsilon \delta_1(V) \cdots \delta_k(V)$. Fix such a choice of $W$. By Lemma \ref{location} we know that $W$ is an irreducible component of $Z(Q_1,\ldots,Q_{n-\dim(W)})$ and from this it follows that $\delta_i(W) \lesssim_{\varepsilon_{i-1},n} \delta_i(V)$ for all $1 \le i \le n-\dim(W)$. Let now $\varepsilon_W \gtrsim_{n,\varepsilon_{n-\dim(W)-1}}1$ be a sufficiently small constant with respect to $\varepsilon_{n-\dim(W)-1}$ and $n$. By induction, we know that we can find an $O_{\varepsilon_W,n}(1)$-full cover $\mathcal{F}(W)$ of $W$ such that, if $n-k>\dim(W)$, then all irreducible components of $\mathcal{F}(W)$ of dimension $n-k$ have degree at most 
  \begin{equation}
  \begin{aligned}
   \varepsilon_W \delta_1(W) \cdots \delta_k(W) &\lesssim_{n,\varepsilon_{k-1}} \varepsilon_W \delta_1(V) \cdots \delta_k(V) \\
    &\le \varepsilon \delta_1(V) \cdots \delta_k(V),
   \end{aligned}
   \end{equation}
  upon choosing $\varepsilon_W$ sufficiently small with respect to $n$ and $\varepsilon_{n-\dim(W)-1}$. Finally, the irreducible components of $\mathcal{F}(W)$ having dimension $\dim(W)$ have degree at most $\lesssim_{n,\varepsilon_W} \deg(W)$ by Lemma \ref{SW} and, since $W$ is an irreducible component of $\mathcal{E}_V(Q)$ and we have chosen the tuple $Q$ to satisfy Proposition \ref{RC}, this is at most 
  $$ \lesssim_{n,\varepsilon_W} \varepsilon_{n-\dim(W)} \delta_1(V) \cdots \delta_{n-\dim(W)}(V) \le \varepsilon \delta_1(V) \cdots \delta_{n-\dim(W)}(V),$$
   upon choosing $\varepsilon_{n-\dim(W)}$ sufficiently small with respect to $\varepsilon_W \gtrsim_{n,\varepsilon_{n-\dim(W)-1}}1$ and therefore with respect to $ \varepsilon_{n-\dim(W)-1}$ and $n$. The result follows.
  \end{proof}
  
  We get the following important consequence of the previous proposition.
  
  \begin{coro}
  \label{mt}
  Every irreducible variety $V \subseteq \C^n$ admits an $O_n(1)$-full cover $\mathcal{F}(V)$ such that $V$ is an irreducible component of $\mathcal{F}(V)$.
  \end{coro}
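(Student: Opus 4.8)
The plan is to extract this immediately from Proposition \ref{EXI}, using the lower bound of Corollary \ref{SB} to rule out $V$ being absorbed into a higher-dimensional component. First I would dispose of the trivial case $d=\dim(V)=n-1$: here by definition $\mathcal{F}(V)=V$ for any $K$, so there is nothing to prove, and I may assume $d<n-1$.

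Next, I would fix a constant $\varepsilon>0$ that is sufficiently small with respect to $n$ — concretely, strictly smaller than the reciprocal of the implied constant in Corollary \ref{SB} — and apply Proposition \ref{EXI} with this $\varepsilon$. This produces an $O_{\varepsilon,n}(1)$-full cover $\mathcal{F}(V)$, built from some $K$-admissible tuple $Q=\{Q_1,\ldots,Q_{n-d}\}$ for $V$, such that for every $1\le k<n-d$ each irreducible component of $\mathcal{F}(V)$ of dimension $n-k$ has degree at most $\varepsilon\,\delta_1(V)\cdots\delta_k(V)$. Since $Q$ is admissible for $V$, the maximal dimension of an irreducible component of $Z(Q_1,\ldots,Q_{n-d})$ containing $V$ equals $d$; as $\dim(V)=d$, this forces $V$ itself to be such a component, so $V$ is an irreducible component of $\mathcal{S}_V^{(n-d)}(Q)\subseteq\mathcal{F}(V)$.

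It then remains to verify that $V$ is an irreducible component of $\mathcal{F}(V)$, i.e. that $V$ is not properly contained in any irreducible component of $\mathcal{F}(V)$. Suppose for contradiction that $V\subsetneq W$ for some irreducible component $W$ of $\mathcal{F}(V)$. Since $W$ is irreducible and properly contains the $d$-dimensional $V$, we must have $\dim(W)>d$; writing $\dim(W)=n-k$, every component of a full cover has dimension at most $n-1$, so $1\le k<n-d$. On the one hand, Proposition \ref{EXI} gives $\deg(W)\le\varepsilon\,\delta_1(V)\cdots\delta_k(V)$. On the other hand, $W$ is an irreducible variety of dimension $n-k$ containing $V$, so Corollary \ref{SB} gives $\deg(W)\gtrsim_n\delta_1(V)\cdots\delta_k(V)$. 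By our choice of $\varepsilon$ these two bounds are incompatible, a contradiction. Hence $V$ is an irreducible component of $\mathcal{F}(V)$, and since $\varepsilon$ was chosen depending only on $n$, $\mathcal{F}(V)$ is an $O_n(1)$-full cover.

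I do not expect a serious obstacle here: all the real work is already contained in Proposition \ref{EXI} and Corollary \ref{SB}. The only point requiring a little care — and the one I would be most careful to state cleanly — is that a component of $\mathcal{F}(V)$ properly containing $V$ necessarily has dimension strictly between $d$ and $n$, so that it lies in the range $1\le k<n-d$ where Proposition \ref{EXI} supplies the needed degree bound, allowing Corollary \ref{SB} to close the argument.
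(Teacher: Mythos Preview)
Your proposal is correct and follows essentially the same approach as the paper: apply Proposition \ref{EXI} with $\varepsilon$ sufficiently small depending only on $n$, and use Corollary \ref{SB} to derive a contradiction from any irreducible component of $\mathcal{F}(V)$ of dimension $n-k>d$ containing $V$. Your version simply spells out a few details the paper leaves implicit (the $d=n-1$ case, the inclusion $V\subseteq\mathcal{S}_V^{(n-d)}(Q)\subseteq\mathcal{F}(V)$, and the dimension range $1\le k<n-d$).
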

  
  \begin{proof}
Let $\varepsilon> 0$ be sufficiently small with respect to $n$. Let $\mathcal{F}(V)$ be an $O_{\varepsilon,n}(1)$-full cover of $V$ of the form provided by Proposition \ref{EXI} with respect to $\varepsilon$ and suppose $\mathcal{F}(V)$ has some irreducible component $W$ of dimension $n-k>\dim(V)$ containing $V$. Then we have that $\deg(W) \le \varepsilon \delta_1(V) \cdots \delta_k(V)$, but this contradicts Corollary \ref{SB} if $\varepsilon$ was chosen sufficiently small with respect to $n$.
\end{proof}

\section{Bounding the number of connected components}
\label{S7}

\subsection{A result of Barone and Basu}

In the proofs of Theorem \ref{4} and Theorem \ref{5} given below, a crucial role will be played by a result of Barone and Basu \cite{BB}. We begin this section by introducing some notation that will help us state this result. 

Given an algebraic set $X \subseteq \C^n$ and a point $x \in X(\R)$, we write $\dim_x^{\R}X$ for the local real dimension of $X(\R)$ at $x$ \cite[\S 2.8]{BCR}. If $Q=\left\{ Q_1,\ldots,Q_m \right\}$, $Q_1,\ldots,Q_m \in \C[x_1,\ldots,x_n]$, is a tuple of polynomials with $m \le n$, then for every $1 \le j \le m$ we shall write
$$ Z_j(Q) = Z(Q_1,\ldots,Q_j),$$
and if $x \in Z_j(Q)(\R)$, we let
$$\dim_{Q,(j)}^{\R}(x) = \left( \dim_x^{\R} Z_1(Q), \ldots, \dim_x^{\R} Z_j(Q)\right).$$
Given two tuples of non-negative integers $\tau = (\tau_1,\ldots,\tau_j)$ and $\sigma = (\sigma_1, \ldots, \sigma_j)$, we will write $\tau \le \sigma$ if $\tau_i \le \sigma_i$ for every $ 1 \le i \le j$. Given a $j$-tuple $\tau$ of non-negative integers with $1 \le j \le m$ and a set of polynomials $Q=\left\{ Q_1,\ldots,Q_m \right\}$ we can consider the set
$$ Z_{\tau}(Q) = \overline{\left\{ x \in Z_{j}(Q)(\R) : \dim_{Q,(j)}^{\R}(x) = \tau \right\}},$$
where the closure is taken with respect to the Euclidean topology of $\R^n$. Clearly $Z_{\tau}(Q) \subseteq Z_j(Q)(\R)$ for every $j$-tuple $\tau$.

We have the following result of Barone and Basu \cite{BB}.

\begin{prop}
\label{BBlema0}
Let $Q= \left\{ Q_1,\ldots,Q_m \right\}$, $Q_1,\ldots,Q_m \in \C[x_1,\ldots,x_n]$, $ 1 \le m \le n$, be a set of polynomials and $d_1 \le \ldots \le d_m$ integers with $\deg(Q_i) \le d_i$ for every $i$. Let $1 \le j \le m$ and let $\tau=(\tau_1,\ldots,\tau_j)$, $\tau_1 \ge \ldots \ge \tau_j$, be a $j$-tuple of non-negative integers satisfying $\tau_i \le n-i$ for every $1 \le i \le j$. Then, the number of connected components of $Z_j(Q)(\R)$ intersecting $Z_{\tau}(Q)$ is at most $\lesssim_n (\prod_{i=1}^j d_i ) d_j^{n-j}$.
\end{prop}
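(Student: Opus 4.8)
The plan is to reduce Proposition~\ref{BBlema0} to the result of Barone and Basu that was quoted in the introduction (\cite[Theorem~4]{BB}), which counts connected components of $Z(f_1,\ldots,f_k)(\R)$ under the assumption that the real dimension of each $Z(f_1,\ldots,f_i)$ drops by at least one at every step. The point of the statement is that we are \emph{not} assuming such a dimension drop globally, but only restricting attention to the semialgebraic locus $Z_\tau(Q)$ where the local real dimension vector is exactly $\tau$, and $\tau$ does satisfy the analogous inequalities $\tau_i \le n-i$, $\tau_1 \ge \cdots \ge \tau_j$. So the whole content is a localisation argument: near a point of $Z_\tau(Q)$, the variety $Z_i(Q)$ behaves dimension-theoretically like a variety of dimension $\tau_i$.

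First I would fix the tuple $\tau$ and look at the set $Z_\tau(Q)$. The idea is to intersect the $Q_i$ with generic affine-linear subspaces, or rather to slice by a generic linear section, so as to reduce each $Z_i(Q)$ to something genuinely of dimension $\tau_i$ while keeping the relevant connected components and not increasing the degrees. More precisely, by the definition of local real dimension one can, for each connected component $C$ of $Z_j(Q)(\R)$ meeting $Z_\tau(Q)$, pick a point $x \in C \cap Z_\tau(Q)$; in a neighbourhood of $x$ the set $Z_i(Q)(\R)$ has dimension $\tau_i$ for each $i \le j$. One then wants to say that the number of such components is controlled by applying \cite[Theorem~4]{BB} to the family $Q_1,\ldots,Q_j$ after passing to a situation where the Barone--Basu dimension hypothesis holds — this is exactly where the quantity $(\prod_{i=1}^j d_i) d_j^{n-j}$, with the $d_j^{n-j}$ factor (rather than $d_j^{\tau_j}$), comes from: the generic linear slicing that forces the dimensions down replaces the ambient dimension $n$ by $j + \tau_j$ worth of "essential" directions but the crude bound still pays $d_j^{n-j}$ for the codimension-$j$ situation in $\R^n$, or alternatively one slices by a generic $(\tau_1 + 1)$-dimensional... — in any case the exponent $n-j$ is what survives from a direct, non-optimised application.

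The cleanest route is probably as follows. Add $n-j$ generic linear forms $L_1,\ldots,L_{n-j}$ to the bottom of the list and consider, for a suitable value, the family obtained by taking a generic linear change of coordinates and projecting; use the fact (standard in this circle of ideas, and implicit in \cite{BB}) that for a generic choice the real dimension of $Z(Q_1,\ldots,Q_i) \cap (\text{generic flag piece})$ drops appropriately on the closure $Z_\tau(Q)$, so that one is genuinely in the hypothesis of \cite[Theorem~4]{BB}, and each connected component of $Z_j(Q)(\R)$ meeting $Z_\tau(Q)$ contributes at least one connected component of the sliced variety. Then \cite[Theorem~4]{BB} applied with the degree bounds $\deg(Q_i)\le d_i$ and the linear forms having degree $1$ yields a count of $\lesssim_n (\prod_{i=1}^j d_i) d_j^{\tau_j} \cdot (\text{stuff})$, and absorbing everything crudely into $d_j^{n-j}$ gives the claimed bound. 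I expect the main obstacle to be the justification that the generic slicing simultaneously (a) realises the correct local dimensions along all of $Z_\tau(Q)$ at once, not just at a single point, and (b) does not merge or destroy connected components that we need to keep separate; this is a semialgebraic transversality/genericity argument (the kind handled via the Milnor--Thom machinery and a careful use of the local dimension stratification in \cite[\S2.8]{BCR}), and making it uniform over the whole closed set $Z_\tau(Q)$ rather than pointwise is the delicate point. Everything else — the bookkeeping of degrees and the final absorption into $d_j^{n-j}$ — is routine.
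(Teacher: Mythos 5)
Your proposal takes a genuinely different route, but it has a gap precisely at the point you flag as ``the delicate point,'' so it does not actually constitute a proof. The paper does not perform any generic linear slicing. Instead, after first replacing the complex polynomials $Q_i$ by the real polynomials $|Q_i|^2 = Q_i\overline{Q_i}$ (a step your sketch omits entirely; note $Q_i \in \C[x_1,\ldots,x_n]$, and Barone--Basu work with real polynomials, so this normalization is necessary, and harmlessly doubles the degrees), the paper invokes \cite[Corollary 3.19]{BB}, which directly furnishes a semialgebraic set $W_\tau'$ satisfying the two-sided inclusion $Z_\tau \subseteq W_\tau' \subseteq Z_j(\R)$. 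From this inclusion the counting is immediate: every connected component of $Z_j(\R)$ meeting $Z_\tau$ meets $W_\tau'$, and the assignment from such components to components of $W_\tau'$ is injective because $W_\tau' \subseteq Z_j(\R)$. The bound on $b_0(W_\tau')$ is then read off from Corollary 3.25, Lemma 3.27 and Lemma 3.28 of \cite{BB}. In other words, the intermediate object on which the Barone--Basu dimension hypothesis does hold is not produced by a fresh transversality argument; it is already supplied by \cite{BB} in exactly the required form.

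Your approach asks you to construct something like $W_\tau'$ by hand via generic linear sections, and to establish uniformly over the closed set $Z_\tau(Q)$ that the slice (a) realises the local dimension vector $\tau$ everywhere at once and (b) meets every relevant connected component of $Z_j(Q)(\R)$ without merging distinct ones. You explicitly acknowledge that this is ``the delicate point,'' and you do not resolve it; as written, the argument would not survive scrutiny. Moreover, genericity alone is not obviously enough for (b): a generic affine slice through a connected component $C$ of $Z_j(\R)$ need not meet $C\cap Z_\tau(Q)$ at all if that intersection is thin inside $C$, so you would need a more careful choice of slice depending on $C$, which threatens the uniformity. The machinery of \cite{BB} that the paper calls upon (their ``approximating varieties'' construction, of which $W_\tau'$ is the output) is precisely engineered to handle this, and it is far cleaner to quote it than to try to reconstruct it. The degree bookkeeping at the end of your sketch (absorbing everything into $d_j^{n-j}$) is fine and matches what a crude application would give, but that is the easy part; the missing content is the construction of a good intermediate set with the two-sided inclusion, and for that you should be citing the relevant parts of \cite{BB} directly rather than gesturing at a transversality argument.
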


\begin{proof}
Let us write $Q_i=p_i+{\bf i} q_i$ with $p_i,q_i$ real polynomials and {\bf i} the imaginary unit and consider the real polynomial $|Q_i|^2 = Q_i \overline{Q_i} = p_i^2 + q_i^2$. Notice that $Z(Q_i)(\R)=Z(|Q_i|^2)(\R)$ and in particular, writing $|Q|^2=\left\{ |Q_1|^2,\ldots,|Q_m|^2 \right\}$, it is $Z_j(Q)(\R) = Z_j(|Q|^2)(\R)$ and $Z_{\tau}(Q) = Z_{\tau}(|Q|^2)$ for every $j$-tuple $\tau$. It therefore suffices to establish the result with the tuple of real polynomials $|Q|^2$ instead of $Q$. Let now $W_{\tau}'$ be as in \cite[Corollary 3.19]{BB}, so that we have $Z_{\tau}(|Q|^2) \subseteq W_{\tau}' \subseteq Z_j(|Q|^2)(\R)$ in our setting. In particular, to each connected component of $Z_j(|Q|^2)(\R)$ intersecting $Z_{\tau}(|Q|^2)$ we can assign a connected component of $W_{\tau}'$ intersected by it. Such an assignment will be injective, since $W_{\tau}' \subseteq Z_j(|Q|^2)(\R)$. Therefore, it suffices to obtain the corresponding estimate for $W_{\tau}'$ and this follows from Corollary 3.25, Lemma 3.27 and Lemma 3.28 of \cite{BB}. Notice that strictly speaking, the cited results from \cite{BB} assume boundedness over a real closed field, which can be accomplished as in Section 3.3 of that article. 
\end{proof}

We will use the following observation.

\begin{lema}
\label{cboundsr}
Let $V \subseteq \C^n$ be an irreducible variety of dimension $d$ and $Q$ a $K$-admissible tuple of polynomials for $V$. Let $x \in Z_j(Q)(\R) \setminus \mathcal{E}_V^{(j)}(Q)$. Then
$$\dim^{\R}_{Q,(j)}(x) \le (n-1,n-2,\ldots,n-j).$$
\end{lema}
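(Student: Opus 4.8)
The plan is to prove this by induction on $j$, tracking the local real dimension of the algebraic sets $Z_i(Q)$ as we cut by one polynomial at a time. The key point is that, away from the envelope $\mathcal{E}_V^{(j)}(Q)$, the chain $Z_1(Q) \supseteq Z_2(Q) \supseteq \cdots \supseteq Z_j(Q)$ behaves like a regular sequence in terms of dimension drops, so that the complex dimension of every irreducible component of $Z_i(Q)$ through such a point is exactly $n-i$, and then I would pass from complex to real dimension.

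First I would fix $x \in Z_j(Q)(\R) \setminus \mathcal{E}_V^{(j)}(Q)$ and prove the \emph{complex} statement: every irreducible component of $Z_i(Q)$ passing through $x$ has dimension exactly $n-i$, for every $1 \le i \le j$. Note that since $x \notin \mathcal{E}_V^{(j)}(Q)$ and the envelopes are nested in the sense that $\mathcal{E}_V^{(i)}(Q) \subseteq \mathcal{E}_V^{(j)}(Q)$ for $i \le j$ (because any component of $Z(Q_1,\ldots,Q_i)$ of dimension $> n-i$ contains a component of $Z(Q_1,\ldots,Q_j)$ of dimension $> n-j$, as in the proof of Lemma \ref{location}), the point $x$ lies outside every $\mathcal{E}_V^{(i)}(Q)$ with $i \le j$. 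Hence for each such $i$, no irreducible component of $Z_i(Q)$ through $x$ has dimension strictly larger than $n-i$; on the other hand, by Krull's Hauptidealsatz every irreducible component of $Z_i(Q) = Z_{i-1}(Q) \cap Z(Q_i)$ has dimension at least $(\dim \text{ of the component of } Z_{i-1}(Q) \text{ it lies in}) - 1 \ge (n-i+1)-1 = n-i$, using the inductive conclusion for $i-1$ together with the base case that $Z_1(Q)=Z(Q_1)$ is a hypersurface near $x$ (dimension $n-1$). This pins the dimension at exactly $n-i$.

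Next I would convert this to the real statement. For any real algebraic set $X$ and any real point $x \in X(\R)$, the local real dimension $\dim_x^{\R} X$ is at most the maximum of the complex dimensions of the irreducible components of $X$ passing through $x$ (this is standard; see \cite[\S 2.8]{BCR}). Applying this with $X = Z_i(Q)$ and using the complex bound just established gives $\dim_x^{\R} Z_i(Q) \le n-i$ for every $1 \le i \le j$, which is exactly the asserted inequality $\dim^{\R}_{Q,(j)}(x) \le (n-1,n-2,\ldots,n-j)$ componentwise.

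The main obstacle I anticipate is making the dimension-chain argument fully rigorous at the level of a single point: one must be careful that ``component of $Z_i(Q)$ through $x$'' interacts correctly with Krull's theorem, i.e.\ that cutting an $(n-i+1)$-dimensional variety through $x$ by a hypersurface not containing it produces a component through $x$ of dimension exactly $n-i$ (rather than the hypersurface possibly containing that whole component, which is excluded precisely because $x \notin \mathcal{E}_V^{(i)}(Q)$ forces the dimension to drop). The nestedness of the envelopes is what guarantees this exclusion holds at every stage $i \le j$ simultaneously, so I would state and use that nestedness explicitly as the first lemma of the argument.
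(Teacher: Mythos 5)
Your overall strategy is the right one and matches the (very terse) proof in the paper: the only content of the lemma is that $x$ lies only on irreducible components of $Z_i(Q)$ of complex dimension at most $n-i$ for every $i \le j$, and then the bound on local real dimension by local complex dimension finishes it. Your fleshing-out of this via Krull to get the dimension drop is correct, though you only need the upper bound on the dimension, so the Hauptidealsatz lower bound is extra work.

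However, the auxiliary claim you lean on is wrong as stated. It is \emph{not} true that $\mathcal{E}_V^{(i)}(Q) \subseteq \mathcal{E}_V^{(j)}(Q)$ for $i \le j$: $\mathcal{E}_V^{(i)}(Q)$ is a union of components of $Z_i(Q)$, whereas $\mathcal{E}_V^{(j)}(Q) \subseteq Z_j(Q) \subsetneq Z_i(Q)$, so a high-dimensional component $W$ of $Z_i(Q)$ generally meets $Z_j(Q)$ in a proper subvariety and will not be contained in $\mathcal{E}_V^{(j)}(Q)$. Your parenthetical justification also does not prove the claimed set inclusion: knowing that $W$ contains \emph{some} high-dimensional subvariety of $Z_j(Q)$ does not make all of $W$ a subset of $\mathcal{E}_V^{(j)}(Q)$. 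What is true, and what your argument actually needs, is the localized statement $Z_j(Q) \cap \mathcal{E}_V^{(i)}(Q) \subseteq \mathcal{E}_V^{(j)}(Q)$: if $x$ lies on a component $W$ of $Z_i(Q)$ with $\dim W > n-i$ and also on $Z_j(Q)$, then $x$ lies on some irreducible component of $W \cap Z(Q_{i+1},\ldots,Q_j)$, which has dimension $> n-j$ by Krull and is contained in a component of $Z_j(Q)$ of dimension $> n-j$, hence in $\mathcal{E}_V^{(j)}(Q)$. Since your point $x$ is assumed to lie in $Z_j(Q)$, this corrected inclusion still yields $x \notin \mathcal{E}_V^{(i)}(Q)$ for all $i \le j$, and the rest of your proof goes through unchanged. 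So the conclusion is right, but the set-theoretic nestedness should be replaced by this pointwise/intersected version.
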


\begin{proof}
This follows from the fact that the complex dimension of a variety bounds the real dimension and the definition of $\mathcal{E}_V^{(j)}(Q)$.
\end{proof}

We can deduce the following result from Proposition \ref{BBlema0}.

\begin{coro}
\label{BBlema}
Let $V$ be an irreducible variety of dimension $d$ and $Q$ a $K$-admissible tuple of polynomials for $V$. Let $1 \le j \le n-d$ and let $\tau=(\tau_1,\ldots,\tau_j)$, $\tau_1 \ge \ldots \ge \tau_j$, be a $j$-tuple of non-negative integers satisfying $\tau_i \le n-i$ for every $1 \le i \le j$. Then, the number of connected components of $Z_j(Q)(\R)$ intersecting $Z_{\tau}(Q)$ is at most $\lesssim_{n,K} \delta(V)^d \deg(V)$.
\end{coro}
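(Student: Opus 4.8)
The plan is to apply Proposition \ref{BBlema0} to the tuple $Q$ and then rewrite the resulting bound in terms of $\deg(V)$ and $\delta(V)=\delta_{n-d}(V)$ using the relations between the partial degrees $\delta_i(V)$ established in Section \ref{S5}.

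Since $Q=\left\{Q_1,\ldots,Q_{n-d}\right\}$ is $K$-admissible for $V$, we have $\deg(Q_i)\le K\delta_i(V)$ for every $1\le i\le n-d$, and by Lemma \ref{dorder} the sequence $\delta_1(V)\le\cdots\le\delta_{n-d}(V)$ is non-decreasing; hence the integers $d_i:=\max_{1\le i'\le i}\deg(Q_{i'})$ satisfy $d_1\le\cdots\le d_{n-d}$, $\deg(Q_i)\le d_i$ and $d_i\le K\delta_i(V)$. As $1\le j\le n-d\le n$ and $\tau=(\tau_1,\ldots,\tau_j)$ satisfies $\tau_1\ge\cdots\ge\tau_j$ and $\tau_i\le n-i$, Proposition \ref{BBlema0} applies with the set of polynomials $Q$ and the degree bounds $d_i$, and shows that the number of connected components of $Z_j(Q)(\R)$ meeting $Z_\tau(Q)$ is
$$\lesssim_n\ \left(\prod_{i=1}^{j}d_i\right)d_j^{\,n-j}\ \lesssim_{n,K}\ \left(\prod_{i=1}^{j}\delta_i(V)\right)\delta_j(V)^{\,n-j}.$$

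It remains to bound the right-hand side by $\lesssim_n\delta(V)^d\deg(V)$. Writing $n-j=d+(n-d-j)$ and applying Lemma \ref{dorder} twice: first $\delta_j(V)\le\delta_{n-d}(V)=\delta(V)$ gives $\delta_j(V)^{\,d}\le\delta(V)^{\,d}$; second, since $\delta_j(V)\le\delta_i(V)$ for each $j+1\le i\le n-d$, we get $\delta_j(V)^{\,n-d-j}\le\prod_{i=j+1}^{n-d}\delta_i(V)$. Multiplying these two inequalities by $\prod_{i=1}^{j}\delta_i(V)$ yields
$$\left(\prod_{i=1}^{j}\delta_i(V)\right)\delta_j(V)^{\,n-j}\ \le\ \delta(V)^{\,d}\prod_{i=1}^{n-d}\delta_i(V).$$
Finally, Theorem \ref{clases} applied with $m=n-d$ --- for which $V$ itself is the top-dimensional irreducible component of $Z(P_1,\ldots,P_{n-d})$ containing $V$, where $P_1,\ldots,P_{n-d}$ are the polynomials of Lemma \ref{minimalP} --- gives $\prod_{i=1}^{n-d}\delta_i(V)\sim_n\deg(V)$, and in particular $\prod_{i=1}^{n-d}\delta_i(V)\lesssim_n\deg(V)$. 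Chaining the three displays produces the asserted bound $\lesssim_{n,K}\delta(V)^d\deg(V)$.

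The argument is essentially bookkeeping once Proposition \ref{BBlema0} is available; the only point that requires a moment's care is the elementary estimate of the previous paragraph, i.e. that the exponent pattern $\delta_j(V)^{\,n-j}$ coming out of Barone--Basu can never beat $\delta(V)^d\prod_{i=1}^{n-d}\delta_i(V)$, which is exactly what the monotonicity $\delta_1(V)\le\cdots\le\delta_{n-d}(V)$ guarantees. Note that Lemma \ref{cboundsr} plays no role here, since Proposition \ref{BBlema0} already bounds precisely the quantity in the statement; that lemma becomes relevant only later, when one must also handle connected components lying off the envelope.
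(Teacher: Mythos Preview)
Your proof is correct and follows essentially the same route as the paper: apply Proposition~\ref{BBlema0} with degree bounds $d_i\le K\delta_i(V)$, then use the monotonicity $\delta_1(V)\le\cdots\le\delta_{n-d}(V)$ together with the relation $\prod_{i=1}^{n-d}\delta_i(V)\sim_n\deg(V)$ from Theorem~\ref{clases}. Your version is simply more explicit in handling the monotonicity hypothesis on the $d_i$ and in spelling out the elementary inequality $\bigl(\prod_{i=1}^{j}\delta_i(V)\bigr)\delta_j(V)^{n-j}\le\delta(V)^d\prod_{i=1}^{n-d}\delta_i(V)$.
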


\begin{proof}
By Proposition \ref{BBlema0} we know that the number of such components is at most 
\begin{equation}
\begin{aligned}
\label{b0W}
&\lesssim_{n,K} \left( \prod_{i=1}^j \delta_i(V) \right) \delta_j(V)^{n-j} \\
 &\lesssim_{n,K} (\delta_1(V) \cdots \delta_{n-d}(V) ) \delta_{n-d}(V)^d\\
 &\lesssim_{n,K}  \deg(V) \delta(V)^d,
 \end{aligned}
 \end{equation}
as desired.
\end{proof}

\subsection{Proof of Theorem \ref{4}}

Theorem \ref{4} follows from Lemma \ref{FD}, Corollary \ref{mt} and the following result.

\begin{teo}
Let $V \subseteq \C^n$ be an irreducible variety of dimension $d$ and let $\mathcal{F}(V)$ be a $K$-full cover of $V$. Then the number $b_0(\mathcal{F}(V)(\R))$ of connected components of $\mathcal{F}(V)(\R)$ satisfies
$$ b_0(\mathcal{F}(V)(\R)) \lesssim_{K,n} \deg(V) \delta(V)^d.$$
\end{teo}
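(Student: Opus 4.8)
The plan is to induct on the codimension $n-d$, using the recursive definition of a full cover. When $d=n-1$ the full cover is $V$ itself, an irreducible hypersurface, so $\delta(V)=\deg(V)$ and the claim $b_0(V(\R))\lesssim_n\deg(V)^n=\deg(V)\delta(V)^{d}$ is the classical Milnor--Thom estimate; equivalently it is the case $m=j=1$ of Proposition \ref{BBlema0}, applied to a single defining polynomial of $V$ and summed over the $O_n(1)$ tuples $\tau=(\tau_1)$ with $0\le\tau_1\le n-1$, which together cover $V(\R)=Z_1(Q)(\R)$.

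For the inductive step, let $Q=\{Q_1,\ldots,Q_{n-d}\}$ be the $K$-admissible tuple appearing in the definition of $\mathcal{F}(V)$, so that $\mathcal{F}(V)=\mathcal{S}_V^{(n-d)}(Q)\cup\bigcup_i\mathcal{F}(W_i)$ with the $W_i$ the irreducible components of $\mathcal{E}_V(Q)$ and each $\mathcal{F}(W_i)$ a $K$-full cover of $W_i$. These are all real algebraic sets, hence closed in $\R^n$, and the number of connected components of a finite union of closed sets is at most the sum of the numbers of connected components of the pieces, so it suffices to bound $b_0(\mathcal{S}_V^{(n-d)}(Q)(\R))$ and each $b_0(\mathcal{F}(W_i)(\R))$ separately. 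For the latter, fix a component $W=W_i$ of $\mathcal{E}_V(Q)$ of dimension $n-k$, where $1\le k\le n-d-1$. By Lemma \ref{location}, $W$ is an irreducible component of $\mathcal{S}_V^{(k)}(Q)$, hence of $Z(Q_1,\ldots,Q_k)$; arguing as in Lemma \ref{lower} (using $\deg(Q_j)\le K\delta_j(V)$ and Lemma \ref{dorder}) we get $\delta_j(W)\lesssim_{K,n}\delta_j(V)$ for $1\le j\le k$, and in particular $\delta(W)\lesssim_{K,n}\delta_k(V)$. The induction hypothesis applied to the full cover $\mathcal{F}(W)$ of the lower-codimensional variety $W$ then gives
\begin{equation*}
b_0(\mathcal{F}(W)(\R))\lesssim_{K,n}\deg(W)\,\delta(W)^{n-k}\lesssim_{K,n}\deg(W)\,\delta_k(V)^{n-k},
\end{equation*}
and summing over the components $W_i$ of $\mathcal{E}_V(Q)$ of a fixed dimension $n-k$ and applying Corollary \ref{LC} yields $\sum_{\dim W_i=n-k}b_0(\mathcal{F}(W_i)(\R))\lesssim_{K,n}\delta_k(V)^{n-k}\delta_1(V)\cdots\delta_k(V)$. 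Since $\delta_j(V)\ge\delta_k(V)$ for $j\ge k$ by Lemma \ref{dorder}, one has $\delta_{n-d}(V)^{d}\prod_{j=k+1}^{n-d}\delta_j(V)\ge\delta_k(V)^{n-k}$, so multiplying through by $\delta_1(V)\cdots\delta_k(V)$ and using $\deg(V)\sim_n\delta_1(V)\cdots\delta_{n-d}(V)$ (Corollary \ref{Bezu} and Theorem \ref{clases}) together with $\delta(V)=\delta_{n-d}(V)$ shows $\delta_k(V)^{n-k}\delta_1(V)\cdots\delta_k(V)\lesssim_n\delta(V)^{d}\deg(V)$. Summing over the $O_n(1)$ values of $k$ gives $\sum_i b_0(\mathcal{F}(W_i)(\R))\lesssim_{K,n}\delta(V)^{d}\deg(V)$.

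It remains to bound $b_0(\mathcal{S}_V^{(n-d)}(Q)(\R))$, and this is where the theorem of Barone and Basu is used, via Corollary \ref{BBlema}. Since $\mathcal{S}_V^{(n-d)}(Q)$ is $d$-dimensional and its components are components of $Z_{n-d}(Q)$, none of them lies in $\mathcal{E}_V^{(n-d)}(Q)$, so $\mathcal{S}_V^{(n-d)}(Q)\cap\mathcal{E}_V^{(n-d)}(Q)$ has dimension $<d$; as it lies in $\mathcal{E}_V(Q)(\R)\subseteq\bigcup_i\mathcal{F}(W_i)(\R)$ it has already been accounted for. For any $x\in\mathcal{S}_V^{(n-d)}(Q)(\R)$ outside this locus, Lemma \ref{cboundsr} gives $\dim_x^{\R}Z_l(Q)\le n-l$ for every $1\le l\le n-d$, so its local real dimension vector $\dim_{Q,(n-d)}^{\R}(x)$ is an admissible $(n-d)$-tuple in the sense of Corollary \ref{BBlema}. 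There are only $O_n(1)$ such tuples $\tau$, and for each of them Corollary \ref{BBlema} bounds the number of connected components of $Z_{n-d}(Q)(\R)$ meeting $Z_\tau(Q)$ by $\lesssim_{K,n}\delta(V)^{d}\deg(V)$; this yields $b_0(\mathcal{S}_V^{(n-d)}(Q)(\R))\lesssim_{K,n}\delta(V)^{d}\deg(V)$ and completes the induction. Combined with Corollary \ref{mt} and Lemma \ref{FD}, this gives Theorem \ref{4}.

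The delicate point is this last step. Corollary \ref{BBlema} controls the connected components of the \emph{ambient} real variety $Z_{n-d}(Q)(\R)$ that meet a prescribed $Z_\tau(Q)$, whereas what is needed is a bound on the connected components of the generally proper, $d$-dimensional subset $\mathcal{S}_V^{(n-d)}(Q)(\R)$, including its possibly lower-dimensional components. Making this transfer rigorous requires checking carefully that the excess-dimensional behaviour of $Z_{n-d}(Q)$ (and the lower-dimensional real loci produced by non-transverse real intersections) has genuinely been isolated inside $\mathcal{E}_V(Q)$ and absorbed into the recursively constructed covers $\mathcal{F}(W_i)$, so that every remaining connected component of $\mathcal{S}_V^{(n-d)}(Q)(\R)$ meets an admissible $Z_\tau(Q)$ and is counted without multiplicity through the ambient variety; this is exactly the gap that the envelope/full-cover formalism of Section \ref{S6} is designed to close.
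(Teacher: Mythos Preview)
Your inductive treatment of the envelope part is essentially the same as the paper's, and your final paragraph correctly identifies the real obstruction: the Barone--Basu bound (Corollary \ref{BBlema}) controls connected components of $Z_{n-d}(Q)(\R)$, not of its proper subset $\mathcal{S}_V^{(n-d)}(Q)(\R)$, and nothing you have written closes this gap. A single connected component of $Z_{n-d}(Q)(\R)$ can contain many components of $\mathcal{S}_V^{(n-d)}(Q)(\R)$, so the injection runs the wrong way; and the remark that components lying inside $\mathcal{E}_V(Q)$ are ``already accounted for'' in $\sum_i b_0(\mathcal{F}(W_i)(\R))$ is not a bound on $b_0(\mathcal{S}_V^{(n-d)}(Q)(\R))$ itself --- it would only help if you were bounding $b_0(\mathcal{F}(V)(\R))$ directly rather than the sum of the pieces. (There is also a secondary slip: to obtain the full vector inequality $\dim_x^{\R} Z_l(Q)\le n-l$ for every $l$ you need $x\notin\mathcal{E}_V(Q)$, not merely $x\notin\mathcal{E}_V^{(n-d)}(Q)$; this is how Lemma \ref{cboundsr} is actually used in the paper.)

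The paper avoids the gap by \emph{not} splitting as $b_0(\mathcal{S}_V^{(n-d)}(Q)(\R))+\sum_i b_0(\mathcal{F}(W_i)(\R))$. Instead it counts connected components of $\mathcal{F}(V)(\R)$ directly, dividing them into those that meet $X:=\mathcal{S}_V^{(n-d)}(Q)\setminus Y$, where $Y=\bigcup_i\mathcal{F}(W_i)$, and those that do not. For the first kind one uses $x\in X\Rightarrow x\notin\mathcal{E}_V(Q)$, so the dimension vector is admissible, and then the crucial containment $Z_{n-d}(Q)\subseteq\mathcal{F}(V)$ (which holds because $\mathcal{E}_V^{(n-d)}(Q)\subseteq\mathcal{E}_V(Q)\subseteq Y$) allows passage to components of $Z_{n-d}(Q)(\R)$ with the injection going the \emph{right} way; now Corollary \ref{BBlema} applies. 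Components of $\mathcal{F}(V)(\R)$ not meeting $X$ lie entirely inside $Y(\R)$, and for those your inductive estimate on $\sum_i b_0(\mathcal{F}(W_i)(\R))$ is exactly what is used. This decomposition --- cutting the components of the target set rather than adding up $b_0$'s of pieces --- is the missing idea in your argument.
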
 

\begin{proof}
We know the result holds for hypersurfaces by \cite{M}, so we may assume $d<n-1$ and that the result holds for all irreducible varieties of dimension strictly larger than $\dim(V)$. Let $Q=(Q_1,\ldots,Q_{n-d})$ be the $K$-admissible tuple of polynomials for $V$ associated to $\mathcal{F}(V)$. Write 
\begin{equation}
\label{Ydef}
Y = \bigcup_{W_i} \mathcal{F}(W_i),
\end{equation}
with the union going through all irreducible components of $\mathcal{E}_V(Q)$ and where $\mathcal{F}(W_i)$ is the $K$-full cover of $W_i$ associated to $\mathcal{F}(V)$, so 
$$\mathcal{F}(V) = \mathcal{S}_V^{(n-d)}(Q) \cup Y.$$
Write $X=\mathcal{S}_V^{(n-d)}(Q) \setminus Y$. If $x \in X(\R)$, since $\mathcal{E}_V(Q) \subseteq Y$, we know by Lemma \ref{cboundsr} that 
$$ \dim_{Q,(n-d)}^{\R} (x) \le (n-1, n-2, \ldots, d).$$ 
In particular, if $C$ is a connected component of $\mathcal{F}(V)(\R)$ that intersects $X$, then it also intersects $Z_{\tau}(Q)$ for some $(n-d)$-tuple $\tau \le (n-1,n-2,\ldots,d)$. Since 
$$Z_{\tau}(Q) \subseteq Z_{n-d}(Q) \subseteq \mathcal{F}(V),$$
we see that the number of connected components of $\mathcal{F}(V)(\R)$ that intersect $X$ is at most the sum over all $\tau \le (n-1,n-2,\ldots,d)$ of the number of connected components of $Z_{n-d}(Q)(\R)$ that intersect $Z_{\tau}(Q)$. By Corollary \ref{BBlema} this quantity is at most $\lesssim_{K,n} \deg(V) \delta(V)^d$. 

It remains to estimate the number of connected components of $\mathcal{F}(V)(\R)$ that do not intersect $X$. These are all contained inside of $Y \subseteq \mathcal{F}(V)$ and so it suffices to estimate the number of connected components of $Y(\R)$. By (\ref{Ydef}) and the induction hypothesis, this quantity is at most
\begin{equation}
\label{tero}
 \lesssim_{K,n} \sum_{W_i} \deg(W_i) \delta(W_i)^{\dim(W_i)},
 \end{equation}
where again the sum runs along all irreducible components of $\mathcal{E}_V(Q)$. Suppose that $W_i$ has dimension $n-k$. Then by Lemma \ref{location} we know that it is an irreducible component of $S_V^{(k)}(Q)$, and therefore of $Z_k(Q)$, and so in particular satisfies $\delta(W_i) \lesssim_{K,n} \delta_k(V)$. Combining this fact with Corollary \ref{LC}, we conclude that the sum (\ref{tero}) restricted to all $W_i$ of dimension $n-k$ is at most
$$ \lesssim_{K,n} \delta_1(V) \cdots \delta_k(V) \delta_k(V)^{n-k} \lesssim_{K,n} \deg(V) \delta(V)^d.$$
Summing this among all $O(1)$ choices of $k$, we obtain the desired result.
\end{proof}

\subsection{Proof of Theorem \ref{5}}

We will establish Theorem \ref{5} by induction on $d$, the result being clear when $d=0$. Assume first that $\deg(P) \le \delta_1(V)$. Then we already know by \cite{OP} that the number of connected components of $\R^n \setminus Z(P)$ itself is bounded by
\begin{equation}
\begin{aligned}
 \lesssim_n \deg(P)^n &\lesssim_n \delta_1(V) \cdots \delta_{n-d}(V) \deg(P)^d \\
 &\lesssim_n \deg(V) \deg(P)^d,
 \end{aligned}
 \end{equation}
so we are done in this case. Let then $\delta_j(V)$ be the largest integer $1 \le j \le n-d$ satisfying $\deg(P) \ge \delta_j(V)$. Let $Q$ be an $O_{n}(1)$-admissible tuple for $V$ of the form provided by Lemma \ref{env2}.

Let us deal first with the points inside of $(V \cap \mathcal{E}_V^{(j)}(Q))(\R)$. Notice that by definition every point of $\mathcal{E}_V^{(j)}(Q)$ lies inside an irreducible component of $\mathcal{E}_V(Q)$ of dimension strictly larger than $n-j$. Since the tuple $Q$ was chosen to satisfy Lemma \ref{env2}, we then know that we can find a polynomial $F=\prod_{i=1}^{j-1} F_i$ of degree 
$$\lesssim_n \delta_{j-1}(V) \lesssim_n \deg(P),$$
vanishing on $\mathcal{E}_V^{(j)}(Q)$ without vanishing identically on $V$. In particular, it suffices to estimate the number of connected components of $\R^n \setminus Z(P)$ intersected by $V \cap Z(F)$. Let $W_1,\ldots,W_r$ be the irreducible components of this intersection, all of which have dimension $d-1$ since $Z(F)$ intersects $V$ properly. We therefore know by induction that the number of connected components of $\R^n \setminus Z(P)$ intersected by $V \cap Z(F)$ is at most
\begin{equation}
\begin{aligned}
\lesssim_n \sum_{i=1}^r \deg(W_i) \deg(P)^{\dim(W_i)} &\lesssim_n \deg(V) \deg(F) \deg(P)^{d-1} \\
&\lesssim_n \deg(V) \deg(P)^d.
\end{aligned}
\end{equation}

It therefore only remains to bound the number of connected components of $\R^n \setminus Z(P)$ intersected by $V(\R) \setminus Z(F)$. In particular, writing $|F|^2$ for the real polynomial $F \overline{F}$ and $f=P|F|^2$, it will suffice to bound the number of connected components of $\R^n \setminus Z(f)$ intersected by $Z_j(Q)$. It will also suffice without loss of generality to bound the number of such components where $f$ is positive. Let us write $Z_1,\ldots,Z_m$ for the irreducible components of $Z_j(Q)$ of dimension $n-j$ that are not contained inside of $Z(f)$. We will bound first the size of the set $\mathcal{C}$ of all connected components $C$ of $\R^n \setminus Z(f)$ that contain an element $x \in Z_j(Q)$ with $f(x)>0$ that can be joined to $Z(f)$ through a path $\pi_C$ in $Z_j(Q)(\R)$. We may assume without loss of generality that only the endpoint of this path lies inside of $Z(f)$. Notice that since $\mathcal{E}_V^{(j)}(Q) \subseteq Z(F)$, every point but at most the endpoint of this path $\pi_C$ lies inside of $(Z_1 \cup \ldots \cup Z_m) \cap Z_{\tau}(Q)$ for some $j$-tuple $\tau \le (n-1,\ldots,n-j)$ that may depend on the specific point. Also, by the existence of the path $\pi_C$, we have that for each such $C$ there exists some $\varepsilon_C >0$ such that the image of $(Z_1 \cup \ldots \cup Z_m) \cap C$ under $f$ contains the interval $(0,\varepsilon_C)$. In particular, since there are finitely many connected components $C$ of $\R^n \setminus Z(f)$ and finitely many components $Z_1,\ldots,Z_m$, we can find some $\varepsilon>0$ such that $f-\varepsilon$ does not vanish identically on any $Z_i$ and $X= (Z_1 \cup \ldots \cup Z_m) \cap Z(f-\varepsilon)$ intersects every element of $\mathcal{C}$. Notice that an element of $X$ belongs to $Z_{\sigma}( \left\{ Q_1,\ldots,Q_j,f-\varepsilon \right\} )$ for some $(j+1)$-tuple $\sigma \le (n-1,\ldots,n-j-1)$ that may again depend on the specific point. In particular, $|\mathcal{C}|$ is bounded by the number of connected components of $Z(Q_1,\ldots,Q_j,f-\varepsilon)$ intersecting some $Z_{\sigma}( \left\{ Q_1,\ldots,Q_j,f-\varepsilon \right\} )$ of the above form. By Proposition \ref{BBlema0} we therefore conclude that
$$ |\mathcal{C}| \lesssim_n \delta_1(V) \cdots \delta_j(V) \deg(f)^{n-j} \lesssim_n \deg(V) \deg(P)^{d},$$
so we are done in this case.

It remains to bound the number of connected components $C$ of $\R^n \setminus Z(f)$ intersected by $Z_j(Q)$ but such that no element of $Z_j(Q) \cap C$ can be joined to $Z(f)$ through a path inside of $Z_j(Q)(\R)$. Clearly, this means that these components of $\R^n \setminus Z(f)$ properly contain a connected component of $Z_j(Q)(\R)$. This component intersects $Z_{\tau}(Q)$ for some $\tau \le (n-1,\ldots,n-j)$ since it contains an element of $Z_j(Q)(\R) \setminus \mathcal{E}_V^{(j)}(Q)$. Therefore, by Corollary \ref{BBlema}, the number of such components is at most 
$$ \lesssim_n \delta_1(V) \cdots \delta_j(V) \delta_j(V)^{n-j} \lesssim_n \deg(V) \delta(V)^d.$$
The result follows.

\section{The incidence geometry of hypersurfaces} 

\subsection{Free configurations}
\label{81}

Let $S$ be a finite set of points in $\R^n$ and $T$ a finite set of varieties in $\R^n$. Given a point $s \in S$ we write $T_s$ for those elements of $T$ containing $s$ and similarly, given $t \in T$, we write $S_t$ for those points of $S$ lying inside of $t$. Notice that
\begin{equation}
\label{DC}
 \mathcal{I}(S,T) = \sum_{s \in S} |T_s| = \sum_{t \in T} |S_t|.
 \end{equation}
We will use the following definition.

\begin{defi}
\label{free}
We say $S$ is $(k,b)$-free with respect to $T$ if, for every choice of $k$ distinct elements $s_1,\ldots,s_k$ from $S$ and $b$ distinct elements $t_1,\ldots,t_b$ from $T$, we have $s_i \notin t_j$ for some $1 \le i \le k$, $1 \le j \le b$.
\end{defi}

Notice that if $S$ is $(k,b)$-free with respect to $T$, then for every choice of subsets $S' \subseteq S$, $T' \subseteq T$, we have that $S'$ is $(k,b)$-free with respect to $T'$. We will need the following well-known lemma (see the Kövári-Sós-Turán bound \cite{KST}, which is a slight refinement of this).

\begin{lema}
\label{trivialbound}
Let $k,b \ge 1$ be integers such that $S$ is $(k,b)$-free with respect to $T$. Then
$$ \mathcal{I}(S,T) \le b^{1/k} |S||T|^{1-1/k}+ (k-1) |T|.$$
\end{lema}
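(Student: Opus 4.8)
The plan is to run the standard double-counting argument that underlies the Kővári–Sós–Turán bound, adapted to the $(k,b)$-free hypothesis. For each point $s \in S$ let $d_s = |T_s|$ be the number of elements of $T$ through $s$, so that $\mathcal{I}(S,T) = \sum_{s \in S} d_s$ by \eqref{DC}. The key observation is to count, with multiplicity, the configurations consisting of one element $t \in T$ together with an ordered $k$-tuple of distinct points of $S$ all lying on $t$; equivalently, count pairs $(s_1,\ldots,s_k; t)$ with the $s_i \in S_t$ distinct. Summing over $t$, the number of such configurations is $\sum_{t \in T} \binom{|S_t|}{k} k!$, and since $S$ is $(k,b)$-free with respect to $T$, no $k$-tuple of points of $S$ can lie simultaneously on $b$ distinct elements of $T$; hence each ordered $k$-tuple of distinct points of $S$ is counted at most $b-1$ times, giving
$$ \sum_{t \in T} \binom{|S_t|}{k} \le (b-1) \binom{|S|}{k} \le b \, |S|^k. $$

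First I would pass from this to a lower bound involving $\mathcal{I}(S,T)$. By \eqref{DC} we have $\sum_{t \in T} |S_t| = \mathcal{I}(S,T)$, and by convexity of $x \mapsto \binom{x}{k}$ (Jensen's inequality applied over the $|T|$ terms) together with the elementary inequality $\binom{x}{k} \ge \big(\frac{x-k+1}{k}\big)^k$ for $x \ge k-1$, one gets
$$ |T| \left( \frac{\mathcal{I}(S,T)/|T| - (k-1)}{k} \right)^k \le \sum_{t\in T}\binom{|S_t|}{k} \le b\,|S|^k, $$
valid when $\mathcal{I}(S,T)/|T| \ge k-1$ (the case $\mathcal{I}(S,T) \le (k-1)|T|$ being trivially within the claimed bound). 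Rearranging yields $\mathcal{I}(S,T)/|T| - (k-1) \le k\,b^{1/k}\,|S|\,|T|^{-1/k}$, and hence, absorbing the factor $k$ into a slightly more careful version of the convexity step (or simply noting that the standard argument gives the clean form), $\mathcal{I}(S,T) \le b^{1/k} |S| |T|^{1-1/k} + (k-1)|T|$.

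The only genuinely delicate point is the bookkeeping in the last step: the naive application of $\binom{x}{k} \ge ((x-k+1)/k)^k$ introduces a factor of $k$ that is absent from the claimed bound, so I would instead use the sharper route of summing $\binom{|S_t|}{k}$ and extracting $|S_t|^k$ directly, or invoke the standard Kővári–Sós–Turán estimate verbatim. Since the lemma is quoted as "well-known" and is explicitly a weakening of the Kővári–Sós–Turán bound referenced as \cite{KST}, I expect the intended proof is precisely this double count followed by the convexity inequality, and the main obstacle is merely to arrange the elementary estimates so that the constant in front of the main term is exactly $b^{1/k}$ and the lower-order term is exactly $(k-1)|T|$; no new ideas beyond convexity and the $(k,b)$-free hypothesis are required.
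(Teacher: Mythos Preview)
Your approach is correct and is in fact the classical K\H{o}v\'ari--S\'os--Tur\'an double-count; your worry about a stray factor of $k$ is unfounded. The right elementary inequalities are $\binom{x}{k} \ge (x-k+1)^k/k!$ for $x \ge k-1$ and $\binom{|S|}{k} \le |S|^k/k!$; the $k!$'s cancel, and combining these with $\sum_{t\in T}\binom{|S_t|}{k} \le (b-1)\binom{|S|}{k}$ and Jensen gives $I/|T| - (k-1) \le (b-1)^{1/k}|S||T|^{-1/k}$ directly, which is even slightly sharper than the stated lemma. So the proof goes through with no missing constant.

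The paper, however, takes a different route: it argues by induction on $k$. For each $s \in S$ one observes that $S\setminus\{s\}$ is $(k-1,b)$-free with respect to $T_s$, applies the inductive hypothesis to bound $\mathcal{I}(S\setminus\{s\},T_s)$, sums over $s$, and controls $\sum_s |T_s|^{1-1/(k-1)}$ by H\"older. This yields an upper bound for $\sum_{s}\mathcal{I}(S,T_s)$, which is then compared against the lower bound $\sum_s \mathcal{I}(S,T_s) \ge \sum_t |S_t|^2 \ge \mathcal{I}(S,T)^2/|T|$ coming from Cauchy--Schwarz. Your direct double-count is shorter and more transparent for this particular statement; the paper's inductive scheme trades a one-line convexity step for a recursive structure that may be more adaptable when the freeness parameters vary across the induction, but here both arguments are of comparable strength.
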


\begin{proof}
If $k=1$ then we clearly have the bound $\mathcal{I}(S,T) \le b|S|$. Inductively, let $S$ be $(k,b)$-free with respect to $T$ and suppose the result has already been established for all smaller values of $k$. Notice that if $s \in S$ then $S \setminus \left\{s\right\}$ is $(k-1,b)$-free with respect to $T_s$. By induction and (\ref{DC}), it follows that
\begin{equation}
\label{gor1}
\begin{aligned}
\sum_{s \in S} \mathcal{I}(S,T_s) &\le \sum_{s \in S} \mathcal{I}(S \setminus \left\{ s \right\},T_s) + \sum_{s \in S} |T_s| \\
&\le b^{\frac{1}{k-1}} \sum_{s \in S} |S| |T_s|^{1-\frac{1}{k-1}} + \sum_{s \in S} (k-1) |T_s| \\
&\le b^{\frac{1}{k-1}} |S|^{1+\frac{1}{k-1}} \mathcal{I}(S,T)^{1-\frac{1}{k-1}} + (k-1) \mathcal{I}(S,T),
\end{aligned}
\end{equation}
where the last bound follows from Hölder's inequality. On the other hand, given $t \in T$ and (not necessarily distinct) elements $s,s' \in S$ incident to $t$, we see that $(s',t)$ is an incidence counted in $\mathcal{I}(S,T_s)$ and therefore 
\begin{equation}
\label{gor2}
\begin{aligned}
\sum_{s \in S} \mathcal{I}(S ,T_s) &\ge \sum_{t \in T} |S_t|^2 \\
&\ge \frac{1}{|T|} \left( \sum_{t \in T} |S_t| \right)^2 \\
&= \frac{\mathcal{I}(S,T)^2}{|T|},
\end{aligned}
\end{equation}
where we have used the Cauchy-Schwarz inequality. The result follows upon comparing (\ref{gor1}) and (\ref{gor2}).
\end{proof}

Notice that the proof of this result does not use any algebraic property of $S$ or $T$ and so is valid in the more general context of abstract bipartite graphs \cite{KST}.

\subsection{Proof of Theorem \ref{6}}

Assume first $d=1$. If $|S| < k$, then clearly $\mathcal{I}(S,T) \le (k-1) \deg(T)$. Otherwise, since $S$ is $(k,b)$-free with respect to $T$, it follows that there are at most $b-1$ elements of $T$ containing all of $V$, thus contributing at most $(b-1)|S|$ incidences. The remaining elements $t \in T$ intersect $V$ in at most $\deg(t) \deg(V)$ points and therefore contribute at most $\deg(T) \deg(V)$ incidences. This gives the result in this case.

Let now $d > 1$ and assume the result holds for every smaller dimension. By the same argument as before either $\mathcal{I}(S,T) \le (k-1)\deg(T)$ or the subset $T_V \subseteq T$ of those elements of $T$ containing all of $V$ satisfies $|T_V| = b'$ for some $0 \le b' \le b-1$. These elements contribute at most $b'|S|$ incidences, so we will restrict attention to the set $T'=T \setminus T_V$. Notice that every $t \in T'$ satisfies $\dim(t \cap V)<d$. For every integer $0 \le s \le n-d$ we will consider the parameters
$$ M_s = \left( \frac{b|S|^k}{k^k \deg(T) (\prod_{i=1}^s \delta_i(V))^k} \right)^{\frac{1}{k(n-s)-1}}.$$
Notice that if $M_{n-d}$ is sufficiently small with respect to $n$ this implies that
$$ |S| \le k b^{-1/k} \deg(T)^{1/k} \deg(V).$$
Plugging this into Lemma \ref{trivialbound} we get the bound
$$ \mathcal{I}(S,T) \le k \deg(T) \deg(V),$$
which is acceptable. We may therefore assume from now on that $M_{n-d} \gtrsim_n 1$. This allows us to apply Theorem \ref{30par} to find a real polynomial $P \notin I(V)$ with $ \deg(P) \lesssim_n M_{n-d}$, such that, writing $s=i_V(M_{n-d})$, each connected component of $\R^n \setminus Z(P)$ contains
$$ \lesssim_n \frac{|S|}{M_{n-d}^{n-s}\Delta_{s}(V)} \lesssim_n \frac{|S|}{M_{n-d}^{n-s} \prod_{i=1}^{s} \delta_i(V)},$$
elements from $S$. Notice now that since $M_{n-d} \lesssim_n \delta_{s+1}(V)$ we have that
\begin{equation}
\begin{aligned}
M_{n-d} &= \left( \frac{b|S|^k}{k^k \deg(T) (\prod_{i=1}^{s} \delta_i(V))^k} \right)^{\frac{1}{k(n-s)-1}+\left( \frac{1}{dk-1}- \frac{1}{k(n-s)-1} \right)} (\prod_{i=s+1}^{n-d} \delta_i(V))^{-\frac{k}{dk-1}} \\
&= M_s \left( \frac{b|S|^k}{k^k \deg(T) (\prod_{i=1}^{s} \delta_i(V))^k} \right)^{\left( \frac{1}{dk-1}- \frac{1}{k(n-s)-1} \right)} (\prod_{i=s+1}^{n-d} \delta_i(V))^{-\frac{k}{dk-1}} \\
&= M_s \left( \frac{b|S|^k}{k^k \deg(T) (\prod_{i=1}^{n-d} \delta_i(V))^k} \right)^{\left( \frac{1}{dk-1}- \frac{1}{k(n-s)-1} \right)} (\prod_{i=s+1}^{n-d} \delta_i(V))^{-\frac{k}{k(n-s)-1}} \\
&\lesssim_n M_s \delta_{s+1}(V)^{1-\frac{dk-1}{k(n-s)-1} - \frac{(n-d-s)k}{k(n-s)-1}} = M_s.
\end{aligned}
\end{equation}
Let $V_{n-s}$ be an $(n-s)$-minimal variety of $V$, so in particular $S \subseteq V \subseteq V_{n-s}$, $\dim(V_{n-s})=n-s$, $V_{n-s}$ is irreducible and $\deg(V_{n-s}) \sim_n \prod_{i=1}^s \delta_i(V)$. Write $\Omega_1,\ldots,\Omega_g$ for the connected components of $\R^n \setminus Z(P)$, $S_i$ for those elements of $S$ lying inside of $\Omega_i$ and $T_i$ for those elements $t \in T'$ such that $t \cap V_{n-s}$ intersects $\Omega_i$. Notice that since the irreducible components of $t \cap V_{n-s}$ have dimension $n-s-1$ and their degrees sum up to at most $\deg(t) \deg(V_{n-s})$, it follows from Theorem \ref{5} that each element $t \in T'$ belongs to $T_i$ for at most $\lesssim_n \deg(t) \deg(V_{n-s}) \deg(P)^{n-s-1}$ values of $1 \le i \le g$.

Using Lemma \ref{trivialbound} we therefore see that the sum of the incidences occurring in each cell is at most
\begin{equation}
\begin{aligned}
 &\le  \sum_{i=1}^g b^{1/k} |S_i| |T_i|^{1-1/k} + (k-1) |T_i| \\
  &\lesssim_n b^{1/k} \frac{|S|}{(M_{n-d}^{n-s} \deg (V_{n-s}))^{1-1/k}} (\deg(T) \deg(V_{n-s}) M_{n-d}^{n-s-1})^{1-1/k} \\
  &+ k \deg(T) \deg(V_{n-s}) M_{n-d}^{n-s-1} \\
  &\lesssim_n b^{1/k} \frac{|S| \deg(T)^{1-1/k}}{M_{n-d}^{1-1/k}} + k \deg(T) (\prod_{i=1}^{s} \delta_i(V)) M_{n-d}^{n-s-1}.
  \end{aligned}
  \end{equation}
  If $M_{n-d} = M_s$ then both terms are equal. Since $1 \lesssim_n M_{n-d} \lesssim_n M_s$ it follows that the left-hand side dominates up to at most a $O_n(1)$-constant. Expanding $M_{n-d}$ on the left-hand side then shows that the above expression is bounded by
  \begin{equation}
  \label{500}
   \lesssim_n \tau_d(b,k) |S|^{\alpha_k(d)}\deg(T)^{\beta_k(d)} \deg(V)^{1-\alpha_k(d)}.
   \end{equation}
  
 It remains to deal with the incidences coming from $Z(P) \cap V$. Let 
 $$Z(P) \cap V=W_1 \cup \ldots \cup W_r,$$
 be the decomposition into irreducible components, so in particular $\dim(W_i)=d-1$ for every $1 \le i \le r$. Write $S^{(i)}$ for those elements $s \in S \cap W_i$ that do not lie inside of $W_j$ for any $j<i$. This gives us a partition of $S$. Notice now that since each element of $T_V$ contains all elements of $S$ and $|T_V|=b'$ it follows that $S$ is $(k,b-b')$-free with respect to $T'=T \setminus T_V$. By induction, we then have that the number of incidences inside of $Z(P) \cap V$ is bounded by
 \begin{equation}
 \begin{aligned}
 &\sum_{i=1}^r \mathcal{I}(S^{(i)},T') \\
 \le & \, C \sum_{i=1}^r \tau_{d-1}(b,k)|S^{(i)}|^{\alpha_k(d-1)} \deg(T)^{\beta_k(d-1)} \deg(W_i)^{1-\alpha_k(d-1)} \\
 &+ (b-b'-1) \sum_{i=1}^r |S^{(i)}| + k \sum_{i=1}^r \deg(T) \deg(W_i) \\
 \le & \, C \tau_{d-1}(b,k)|S|^{\alpha_k(d-1)} \deg(T)^{\beta_k(d-1)} (M_{n-d} \deg(V))^{1-\alpha_k(d-1)} \\
  &+ (b-b'-1) |S| + C k \deg(T) \deg(V) M_{n-d} \\
 \le & \, C \tau_d(b,k)|S|^{\alpha_k(d)}\deg(T)^{\beta_k(d)} \deg(V)^{1-\alpha_k(d)} + (b-b'-1)|S|,
 \end{aligned}
 \end{equation}
where $C=O_n(1)$ is a constant whose precise value may change at each occurrence and where have used that, since $M_{n-d} \gtrsim_n 1$ and $d \ge 2$, we have
\begin{equation}
\begin{aligned}
C k \deg(T) \deg(V) M_{n-d} &\lesssim_n C k \deg(T) \deg(V) M_{n-d}^{d-1} \\
&\lesssim_n \tau_d(b,k)|S|^{\alpha_k(d)}\deg(T)^{\beta_k(d)} \deg(V)^{1-\alpha_k(d)}. \\
\end{aligned}
\end{equation}
Combining this with (\ref{500}) and the estimate $\mathcal{I}(S,T_V) \le b'|S|$, we get the desired result.

\subsection{Sharp constructions}
\label{83}

 As remarked in Section \ref{81}, Lemma \ref{trivialbound} is valid for an abstract set of elements $S$ and an arbitrary family $T$  of subsets of $S$, as long as $S$ is $(k,b)$-free with respect to $T$. This is a slightly weaker form of the Kövári-Sós-Turán bound \cite{KST}. On the other hand, constructing examples with the largest possible number of incidences in this abstract setting is a difficult task and is known as the Zarankiewicz problem \cite{Zz}. In particular, it is conjectured that for every choice of positive integers $b,k$ it is possible to find a set of elements $X$ and a family $Y$ of subsets of $X$, for general choices of sizes $|X|$ and $|Y|$, such that $X$ is $(k,b)$-free with respect to $Y$ and $\mathcal{I}(X,Y) \gtrsim_{b,k} |X||Y|^{1-1/k}$. This is currently known for $k \le 3$ \cite{Bo}. We also refer the reader to \cite{BK} for some recent progress. We now show a straightforward construction that would allow us to embed such examples into our algebraic setting, showing that the exponents and the dependency on the degrees in Theorem \ref{6} are tight in general. However, we remark that in some cases of Theorem \ref{6}, like the case of points and lines corresponding to the Szemerédi-Trotter theorem, lower bounds are known by different constructions (see also \cite{DS}).
 
 \begin{prop}
 Let $b,k$ and $1 \le d \le n$ be positive integers. Suppose there exists a set of elements $X$ and a family $Y$ of subsets of $X$ such that $X$ is $(k,b)$-free with respect to $Y$ and $\mathcal{I}(X,Y) \gtrsim_{b,k} |X||Y|^{1-1/k}$. Then, given any irreducible variety $V \subseteq \C^n$ of dimension $d$ with $V(\R)$ an irreducible real variety of dimension $d$ and with $\deg(V) \delta(V)^d \le c \frac{\mathcal{I}(X,Y)}{|Y|}$ for some sufficiently small $c>0$ depending on $n$ and $k$, we can find a family of hypersurfaces $T \subseteq \R^n$ and a set of points $S \subseteq V(\R)$ such that $S$ is $(k,b)$-free with respect to $T$ and 
 $$ \mathcal{I}(S,T) \gtrsim_{b,k,n} |S|^{\alpha_k(d)} \deg(T)^{\beta_k(d)} \deg(V)^{1-\alpha_k(d)}.$$
 \end{prop}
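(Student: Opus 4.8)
\emph{Overall strategy.} The plan is to transplant the abstract bipartite incidence graph of $(X,Y)$ onto $V(\R)$: I will place the elements of $X$ as points in sufficiently general position on $V(\R)$ and then realise each $y\in Y$ by a hypersurface of controlled degree passing through \emph{exactly} the points assigned to $y$. Since $V$ is the complexification of the irreducible real variety $V(\R)$ of dimension $d\ge 1$, the set $V(\R)$ is infinite and Zariski dense in $V$, so I may fix $S=\{s_x:x\in X\}\subseteq V(\R)$ in general position, where ``general position'' means that $S$ avoids a certain finite union of proper subvarieties of the configuration space $V(\R)^{|X|}$, chosen so that a bounded number of the $s_x$ always impose independent conditions on the relevant spaces of polynomials.

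\emph{Trimming $Y$ and building the hypersurfaces.} First I would discard the members of $Y$ spanning few points. Put $Y^{\ast}=\{y\in Y:|y|>\theta\}$ with $\theta\sim_n\delta(V)^d\deg(V)$ chosen large enough that Theorem~\ref{8tool} applies with room to spare; since $c$ is small with respect to $n$, the hypothesis $\deg(V)\delta(V)^d\le c\,\mathcal{I}(X,Y)/|Y|$ forces $\theta\le\tfrac12\,\mathcal{I}(X,Y)/|Y|$, hence $\mathcal{I}(X,Y^{\ast})\ge\mathcal{I}(X,Y)-|Y|\theta\ge\tfrac12\,\mathcal{I}(X,Y)$, and $X$ stays $(k,b)$-free with respect to $Y^{\ast}\subseteq Y$. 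For each $y\in Y^{\ast}$ set $\mu_y=|y|$ and $D_y=\lceil C(\mu_y/\deg(V))^{1/d}\rceil$ with $C=C(n)$ large. Because $\mu_y>\theta$ we get $D_y\ge 2(n-d)\delta(V)$, so Theorem~\ref{8tool} together with Lemma~\ref{HI} gives $\dim_{\R}U_y\ge c_0 D_y^{\,d}\deg(V)\ge 2\mu_y$, where $U_y=\R[x_1,\ldots,x_n]_{\le D_y}/I_{\R}(V)_{\le D_y}$. By general position the subspace $W_y\subseteq U_y$ of classes vanishing on $S_y=\{s_x:x\in y\}$ has dimension $\ge\mu_y$, and no point $s_{x'}$ with $x'\notin y$ lies in the base locus of $W_y$ (otherwise $S_y\cup\{s_{x'}\}$, a set of $\mu_y+1\le\dim U_y$ general points, would impose dependent conditions on $U_y$); thus $\{f\in W_y:f(s_{x'})=0\}$ is a proper subspace of $W_y$ for each such $x'$. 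As $\R$ is infinite, $W_y$ is not the union of these finitely many proper subspaces, so I can pick $f_y\in W_y$, nonzero in $U_y$, with $f_y(s_{x'})\ne 0$ for all $x'\notin y$. Lifting $f_y$ to $P_y\in\R[x_1,\ldots,x_n]_{\le D_y}$ (necessarily non-constant, and $P_y\notin I(V)$ because $V$ is the complexification of $V(\R)$), the hypersurface $t_y=Z(P_y)$ has $\deg(t_y)\le D_y\lesssim_n(\mu_y/\deg(V))^{1/d}$ and $\{x\in X:s_x\in t_y\}=y$.

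\emph{The bookkeeping.} With $T=\{t_y:y\in Y^{\ast}\}$ the incidence graph between $S$ and $T$ coincides with that between $X$ and $Y^{\ast}$, so $S$ is $(k,b)$-free with respect to $T$ and $\mathcal{I}(S,T)=\mathcal{I}(X,Y^{\ast})\ge\tfrac12\,\mathcal{I}(X,Y)\gtrsim_{b,k}|X|\,|Y|^{1-1/k}$. Meanwhile, by concavity of $t\mapsto t^{1/d}$,
\begin{equation*}
\deg(T)=\sum_{y\in Y^{\ast}}\deg(t_y)\ \lesssim_n\ \sum_{y\in Y^{\ast}}\Big(\tfrac{\mu_y}{\deg(V)}\Big)^{1/d}\ \le\ |Y|^{1-\frac1d}\Big(\tfrac{\mathcal{I}(X,Y)}{\deg(V)}\Big)^{1/d}.
\end{equation*}
Substituting this into $|S|^{\alpha_k(d)}\deg(T)^{\beta_k(d)}\deg(V)^{1-\alpha_k(d)}$ and using the identities $1-\alpha_k(d)=\beta_k(d)/d$ and $\tfrac{d-1}{d}\,\beta_k(d)=(1-\tfrac1k)\alpha_k(d)$ (both immediate from the definitions), one finds that $\deg(V)$ cancels, $|S|$ appears to the first power, and the expression is $\lesssim_n|X|^{\alpha_k(d)}|Y|^{\frac{d-1}{d}\beta_k(d)}\mathcal{I}(X,Y)^{1-\alpha_k(d)}$; since $|X|\,|Y|^{1-1/k}\lesssim_{b,k}\mathcal{I}(X,Y)$ by hypothesis, raising to the power $\alpha_k(d)$ and multiplying by $\mathcal{I}(X,Y)^{1-\alpha_k(d)}$ shows this is $\lesssim_{b,k,n}\mathcal{I}(X,Y)\le 2\,\mathcal{I}(S,T)$, which is the claimed bound. (When $\alpha_k(d)=0$, i.e.\ $d=1$, the same computation reads directly as $\deg(T)\deg(V)\lesssim_n\mathcal{I}(X,Y^{\ast})=\mathcal{I}(S,T)$.)

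\emph{The main obstacle.} The delicate point is the construction of the $t_y$: applying Theorem~\ref{2} or Lemma~\ref{reduciblecase} directly produces a polynomial vanishing on $S_y$ but gives no control over the other points of $S$, and such spurious incidences would in general ruin the $(k,b)$-freeness. What rescues the argument is that the degree budget $D_y$ is large enough --- this is exactly where the hypothesis $\deg(V)\delta(V)^d\le c\,\mathcal{I}(X,Y)/|Y|$ enters, through Theorem~\ref{8tool} --- that the linear system $W_y$ of candidate polynomials has dimension comparable to $\mu_y$ rather than just positive, leaving enough freedom to steer $Z(P_y)$ away from every unwanted point once $S$ is in general position. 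Finally, making the general-position hypothesis precise is routine: for each $y\in Y^{\ast}$ the conditions ``$S_y$ imposes independent conditions on $U_y$'' and ``$S_y\cup\{s_{x'}\}$ imposes independent conditions on $U_y$'' ($x'\in X\setminus y$) each fail only on a proper subvariety of $V(\R)^{|X|}$, and a point of $V(\R)^{|X|}$ outside their finite union exists because $V(\R)^{|X|}$ is Zariski dense in the irreducible variety $V^{|X|}$.
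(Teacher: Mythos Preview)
Your argument is correct and follows the same basic plan as the paper---choose points of $V(\R)$ in general position, realise each abstract set $y$ by a hypersurface of controlled degree meeting $S$ exactly in $\{s_x:x\in y\}$, and compute---but you organise the construction differently. The paper first \emph{regularises} $Y$: it keeps only the members contributing at least half the average, then chops each such $y$ into disjoint pieces of a single fixed size $K\sim\sigma$, checks that the resulting family is still $(k,b)$-free (this uses $\lfloor\sigma/4\rfloor>k$), and then uses one degree $D$ chosen so that $\dim(\C[x]_{\le D}/I(V)_{\le D})=K+1$ exactly. This makes the final computation entirely explicit (all hypersurfaces have the same degree $D\sim(K/\deg V)^{1/d}$), at the cost of the regularisation step. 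You instead keep the variable sizes $\mu_y$, assign a variable degree $D_y\sim(\mu_y/\deg V)^{1/d}$ to each $y$, and then absorb the variability in the final calculation via the concavity of $t\mapsto t^{1/d}$ and the identities $1-\alpha_k(d)=\beta_k(d)/d$ and $\frac{d-1}{d}\beta_k(d)=(1-\frac1k)\alpha_k(d)$. Your route avoids the somewhat fiddly $(k,b)$-freeness check for the chopped family; the paper's route makes the degree bookkeeping cleaner and gives hypersurfaces of uniform degree. Both rely on the same general-position mechanism (Theorem~\ref{8tool} forces $\dim U_y$ to exceed $\mu_y$, so generic points impose independent conditions and the candidate polynomial can be steered away from each unwanted $s_{x'}$).
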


\begin{proof}
Our first step will be to modify $Y$ slightly so that each of its members is incident to the same number of elements of $X$ without affecting the $(k,b)$-free nature of the configuration. With this in mind, write $\sigma = \frac{\mathcal{I}(X,Y)}{|Y|} \gtrsim_{k,b} |X||Y|^{-1/k}$ for the average number of incidences of an element of $Y$. Clearly, if $Y_1 \subseteq Y$ consists of those elements contributing $< \sigma/2$ incidences and $Y_2 = Y \setminus Y_1$, we have that $\mathcal{I}(X,Y_2) \ge |Y| \sigma/2$. Since $|y| \ge \sigma/2$ for every $y \in Y_2$, we can associate to every such $y$ a family of disjoint subsets $a_1^y,\ldots, a_r^y \subseteq y$, each of size $\lfloor \sigma/4 \rfloor$ and with $\sum_{i=1}^r |a_i^y| \ge |y|/2$. Let $Y_3$ be the family of subsets $a_i^y$ of $X$ obtained in this way, with $y$ ranging over all elements of $Y_2$. Notice that choosing the constant $c$ in the statement sufficiently small with respect to $k$, we can guarantee that $\lfloor \sigma/4 \rfloor > k$. As a consequence, since $X$ is $(k,b)$-free with respect to $Y$, we see that each member of $Y_3$ can only have appeared as one of the subsets $a_i^y$ for at most $b$ choices of $y \in Y_2$. We conclude that
$$\mathcal{I}(X,Y_3) \ge \frac{1}{b} \sum_{y \in Y_2} |y|/2 = \frac{1}{2b} \mathcal{I}(X,Y_2) \ge |Y| \frac{\sigma}{4b} \gtrsim_{k,b} |X||Y|^{1-1/k},$$
 so if $|Y| \ge |Y_3|$, we have $\mathcal{I}(X,Y_3) \gtrsim_{k,b} |X||Y_3|^{1-1/k}$. On the other hand, if $|Y_3| > |Y|$, then by construction of the members of $Y_3$ we know that 
$$\mathcal{I}(X,Y_3) \ge |Y_3| \lfloor \sigma/4 \rfloor \gtrsim_{k,b} |X||Y_3|^{1-1/k}.$$
Now suppose $x_1,\ldots,x_k$ are elements of $X$ and $y_1,\ldots,y_b$ are elements of $Y_3$ such that $x_i \in y_j$ for all $1 \le i \le k, 1 \le j \le b$. This implies that the $y_j$ have non-empty intersection and so arise in our construction of $Y_3$ from subsets $a_i^y$ corresponding to different elements $y \in Y_2$. However, since $Y_2 \subseteq Y$, this contradicts that $X$ is $(k,b)$-free with respect to $Y$. We have thus shown that $X$ is $(k,b)$-free with respect to $Y_3$. Therefore, renaming $Y_3$ as $Y$, we see that we may assume that all elements of $Y$ are incident to the same number of elements of $X$. We write $K$ for this number of incidences, so $\mathcal{I}(X,Y) = K |Y|$.

Let now $V$ be as in the statement. In particular, since $K \gtrsim \sigma=\frac{\mathcal{I}(X,Y)}{|Y|}$, we have that $\deg(V) \delta(V)^d \le c' K$ for some $c'>0$ that we may take sufficiently small with respect to $n$. By deleting a few elements from each member of $Y$ as to reduce the value of $K$ slightly if necessary, we may then assume there is some $D$ such that 
$$\dim(\C[x_1,\ldots,x_n]_{\le D} / I(V)_{\le D}) = K+1.$$
Indeed, since the bound in Theorem \ref{8tool} is known to be an asymptotic identity \cite{Cha}, this can be accomplished reducing $K$ by at most an $O_n(1)$ constant and with $D \sim_n \left( \frac{K}{\deg(V)} \right)^{1/d}$. Write $X=\left\{ x_1,\ldots,x_{|X|} \right\}$. We define a set $S=\left\{ s_1,\ldots,s_{|S|} \right\} \subseteq V(\R)$ with $|S|=|X|$ by choosing its elements generically from $V(\R)$. Precisely, letting $g_1,\ldots,g_{K+1}$ be polynomials of degree at most $D$ projecting to a basis of $\C[x_1,\ldots,x_n]_{\le D} / I(V)_{\le D}$, we require that for all choices of $1 \le i_1 < \ldots < i_r \le |X|$ and $1 \le j_1 < \ldots < j_r \le K+1$, we have $\det( (g_{j_p}(s_{i_q}))_{1 \le p,q \le r} ) \neq 0$. Since this only requires us to recursively chose the elements outside of a proper algebraic subset of $V(\R)$, it can always be ensured. This guarantees that we cannot find a nontrivial linear combination of the $g_j$ vanishing on $K+1$ elements of $S$.

Each element $y \in Y$ contains elements $x_i \in X$ for every $i$ in a certain subset $I_y \subseteq \left\{ 1 ,\ldots, |X| \right\}$ with $|I_y| = K$. If we consider the corresponding subset of $S$ consisting of those $s_i$ with $i \in I_y$, since $|I_y| < K+1$, we know we can find a nontrivial linear combination $P_y$ of $g_1,\ldots,g_{K+1}$ vanishing on these elements. Notice that by the construction of $S$ given in the previous paragraph we know that $P_y$ does not vanish on any other element of $S$. This gives rise to a family of hypersurfaces $T$ of $\C^n$ of degree at most $D$, with $|T|=|Y|$ and so in particular $\deg(T) \le D|Y|$, and such that the incidence graph of $S$ and $T$ coincides with the incidence graph of $X$ and $Y$. We therefore conclude that $S$ is $(k,b)$-free with respect to $T$ and
\begin{equation}
\begin{aligned}
 \mathcal{I}(S,T) &\gtrsim_{b,k,n} |X||Y|^{1-1/k} \\
 &\gtrsim_{b,k,n} |X|^{\alpha_k(d)} |Y|^{\beta_k(d)} \left( |X| |Y|^{-1/k} \right)^{\beta_k(d)/d} \\
 &\gtrsim_{b,k,n}   |S|^{\alpha_k(d)}  (\frac{\deg(T)}{D})^{\beta_k(d)} K^{\beta_k(d)/d} \\
 &\gtrsim_{b,k,n} |S|^{\alpha_k(d)} \deg(T)^{\beta_k(d)} \deg(V)^{1-\alpha_k(d)}.
 \end{aligned}
 \end{equation}
 \end{proof}

\end{document}